\newcommand{\D}{\partial}
\newcommand{\mb}{\mathbb}
\newcommand{\ol}{\overline}
 \DeclareMathOperator{\GL}{GL}
\DeclareMathOperator{\Mat}{Mat} 
\DeclareMathOperator{\End}{End}
 \DeclareMathOperator{\im}{Im}
\DeclareMathOperator{\trdeg}{tr\text{.}deg} \DeclareMathOperator{\wt}{wt}
  \DeclareMathOperator{\Span}{Span}
\DeclareMathOperator{\dep}{dep}
\title[Hyperderivatives of the deformation series]{Hyperderivatives of the deformation series associated with arithmetic gamma values and characteristic $p$ multiple zeta values}
\author{Ryotaro Harada and Daichi Matsuzuki}
		\address{
				Tohoku University, Aza-Aoba 6-3 Aramaki, Aoba-ku, Sendai City, Miyagi, 980-8578, Japan.}
        \address{
				National Tsing Hua University, 101, Section 2, Kuang-Fu Road, Hsinchu 300, Taiwan R.O.C.}
			\email{harada@tohoku.ac.jp}
            \email{matsuzuki@math.nthu.edu.tw}
\date{October 30, 2024}
\newtheorem{thm}{Theorem}[section]
\newtheorem{lem}[thm]{Lemma}
\newtheorem{cor}[thm]{Corollary}
\newtheorem{prop}[thm]{Proposition}
\theoremstyle{remark}
\subjclass[2010]{11M38 (primary), 11J72, 11J93}
\keywords{function field, hyperderivatives, $t$-motive, gamma values, multiple zeta values}
\theoremstyle{definition}
\newtheorem{defn}[thm]{Definition}
\newtheorem{eg}[thm]{Example}
\begin{document}
\bibliographystyle{amsalpha+}

%%%%%%%%%%%%%%%%%%%%%%%%%%%%%%%%%%%%%%%%%%%%%%%%%%%%%%%%%%%%%%%%%%%%%%
\begin{abstract}
In the number theory in positive characteristic, there are analogues of some special values introduced by Carlitz, Carlitz gamma values and Carlitz zeta values for instance. Each of them is further developed to arithmetic gamma values and multiple zeta values by Goss and Thakur respectively.  
In this paper, by generalizing a result of Chang-Papanikolas-Thakur-Yu (2010), we obtain the algebraic independence of certain arithmetic gamma values, positive characteristic multiple zeta values of restricted indices and hyperderivatives of their deformations. 
We prove this by using Chang-Papanikolas-Yu's derivation, Maurichat's prolongation, Namoijam's formula and Papanikolas' theory of $t$-motivic Galois group. 
\end{abstract}

%%%%%%%%%%%%%%%%%%%%%%%%%%%%%%%%%%%%%%%%%%%%%%%%%%%%%%%%%%%%%%%%%%%%%%

\maketitle
\tableofcontents
\setcounter{section}{-1}

\section{Introduction}
\subsection{Background in positive characteristic}
Let $A:=\mathbb{F}_q[\theta]$ be the polynomial ring in the variable $\theta$ over finite field $\mathbb{F}_q$, with quotient field $k:=\mathbb{F}_q(\theta)$, and $A_{+}$ be the set of monic polynomials in $A$.
Let $k_{\infty}$ be the completion of $k$ at the infinite place $\infty$, and $\mathbb{C}_{\infty}$ be the completion of a fixed algebraic closure $\overline{k_{\infty}}$ at $\infty$.
In this paper, we deal with positive characteristic analogues of gamma values, multiple zeta values, and deformation series associated to them.  

Let us briefly recall the pioneering works of Carlitz (\cite{Ca35} for example) on periods in positive characteristic.
He introduced the Carlitz period $\tilde{\pi}$ (an analogue of $2\pi\sqrt{-1}$), Carlitz gamma values $\Gamma_C(s)$ (analogues of gamma values, $ s \geq 1$), and Carlitz zeta values $\zeta_A(s)$ (analogues of Riemann zeta values, $ s \geq 1$). It was proven by him that these values satisfy analogue of Euler's famous formula for Riemann zeta values $\zeta(2s)\ (s\geq 1)$.

Algebraic independence property of these analogues are well studied.
Let $s=\sum_{j\geq 0}s_jq^j \in \mathbb{N}$ with $0\leq s_j\leq q-1$. Then Carlitz gamma values are defined by $\Gamma_C(s):=\prod_i\bigl(\prod_{j=0}^{i-1}(\theta^{q^i}-\theta^{q^j} )\bigr)^{s_i}\in A_+$.
Goss (\cite{G80}) gave a generalization of Carlitz gamma value, so-called the arithmetic gamma value. For $s=\sum_{j\geq 0}s_jq^j\in\mathbb{Z}_p$ with $0\leq s_j\leq q-1$, the arithmetic gamma values are defined as the following form:
\[
    \Gamma(s):=\prod_i\biggl(\prod_{j=0}^{i-1}\Bigl( 1-\frac{\theta^{q^j}}{\theta^{q^i}} \Bigr)\biggr)^{s_i}\in k_{\infty}.
\]
Goss' definition of gamma values is essentially the same as the Carlitz gamma values when $s \in \mathbb{N}$.

In 2010, Chang, Papanikolas, Thakur and Yu determined the algebraic independence among arithmetic gamma values and Carlitz zeta values (\cite{CPTY10}) as the following:
\begin{thm}[{\cite[Theorem 4.2.2]{CPTY10}}]\label{CPTY10}
    Given any $s, l\in\mathbb{N}$, let $E$ be the field generated by the set 
    \[
        \{\tilde{\pi}, \zeta_A(1), \ldots, \zeta_A(s)  \}\cup \{ \Gamma\bigl({c/(1-q^l)}\bigr)\ |\ 1\leq c\leq q^l-2 \}.
    \]
    Then the transcendence degree of $E$ over $\overline{k}$ is
    \[
        s-\left\lfloor \frac{s}{p} \right\rfloor-\left\lfloor \frac{s}{q-1} \right\rfloor+\left\lfloor \frac{s}{p(q-1)} \right\rfloor + l.
    \]
\end{thm}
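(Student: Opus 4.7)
The strategy is to apply Papanikolas' theorem, which identifies the transcendence degree of the period field of a pre-$t$-motive over $\overline{k}$ with the dimension of its $t$-motivic Galois group $\Pi$. The first step is to realize every generator of $E$ as an entry in the period matrix of a single pre-$t$-motive $M = M_\zeta \oplus M_\Gamma$. For $M_\zeta$ I would take the direct sum of the Anderson--Thakur dual $t$-motives associated with the indices $1,\ldots,s$, whose periods recover $\tilde{\pi}$ together with $\zeta_A(1),\ldots,\zeta_A(s)$. For $M_\Gamma$ I would invoke the Sinha--Anderson--Thakur soliton construction: the values $\Gamma\bigl(c/(1-q^l)\bigr)$ with $1\le c\le q^l-2$ arise as periods of a rank-one Drinfeld $\mathbb{F}_{q^l}[t]$-module with complex multiplication by $\mathbb{F}_{q^l}[\theta]$, which underlies a rank-$l$ pre-$t$-motive in Papanikolas' framework.

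Next, I would compute the dimensions of the Galois groups $\Pi_{M_\zeta}$, $\Pi_{M_\Gamma}$, and $\Pi_M$. For $\Pi_{M_\zeta}$, Chang--Yu's theorem on the algebraic independence of Carlitz zeta values gives
\begin{equation*}
\dim \Pi_{M_\zeta} = s - \left\lfloor \frac{s}{p} \right\rfloor - \left\lfloor \frac{s}{q-1} \right\rfloor + \left\lfloor \frac{s}{p(q-1)} \right\rfloor + 1,
\end{equation*}
where the summand $+1$ is contributed by the Carlitz quotient carrying $\tilde{\pi}$. For $\Pi_{M_\Gamma}$, the CM theory behind the soliton construction shows that $\Pi_{M_\Gamma}$ is a torus of dimension $l$, its characters coming from $\mathrm{Gal}(\mathbb{F}_{q^l}/\mathbb{F}_q)$ together with the cyclotomic weight detecting $\tilde{\pi}$.

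The third step, which I expect to be the principal obstacle, is to compare $\Pi_M$ with the fiber product $\Pi_{M_\zeta} \times_{\mathbb{G}_m} \Pi_{M_\Gamma}$ taken over their common Carlitz quotient. The natural projections force $\Pi_M$ to embed inside this fiber product, and the content lies in verifying that the inclusion is an equality. The decisive reduction is to show that the only simple subquotient common to $M_\zeta$ and $M_\Gamma$ is the Carlitz motive: on the zeta side, the simple constituents other than Carlitz are controlled by the Chang--Yu analysis of Anderson--Thakur motives, while on the gamma side, the CM structure forces the only weight-one character to coincide with Carlitz. A standard Tannakian argument then identifies $\Pi_M$ with the fiber product, yielding
\begin{equation*}
\dim \Pi_M = \dim \Pi_{M_\zeta} + \dim \Pi_{M_\Gamma} - 1,
\end{equation*}
and the formula in the theorem follows. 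The genuinely delicate ingredient is this common-subquotient dichotomy, since it requires pitting the combinatorial data of Anderson--Thakur motives against the arithmetic of Drinfeld modules with CM by $\mathbb{F}_{q^l}[\theta]$ and ruling out any hidden comparison isomorphism beyond Carlitz.
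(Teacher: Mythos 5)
Your skeleton is the CPTY10 strategy itself (which the present paper does not reprove: it quotes the result and later recovers it as the $n=0$, $I=\{1,\dots,s\}$ case of Theorem \ref{MainThm}): package everything into $M_\zeta\oplus M_\Gamma$, use Chang--Yu \cite{CY07} for $\dim\Gamma_{M_\zeta}$, use the CM module to see that $\Gamma_{M_\Gamma}$ is an $l$-dimensional torus, and prove $\dim\Gamma_{M}=\dim\Gamma_{M_\zeta}+l-1$. One small correction of attribution: the arithmetic gamma values $\Gamma\bigl(c/(1-q^l)\bigr)$ are tied to the periods and quasi-periods of the Carlitz $\mathbb{F}_{q^l}[t]$-module $C_l$ (Thakur \cite{T91}, and \cite[\S 3.3]{CPTY10}); the Sinha--Anderson soliton construction is the tool for \emph{geometric} gamma values, not these.

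The genuine gap is at your decisive step. First, the dichotomy as you state it is not literally true: every tensor power $C^{\otimes m}$ of the Carlitz motive is a simple subquotient common to both sides, not just $C$ itself. More importantly, ``no common simple subquotient beyond Carlitz $\Rightarrow$ $\Gamma_M$ equals the fiber product'' is not a standard Tannakian implication. The defect of the inclusion $\Gamma_M\subseteq\Gamma_{M_\zeta}\times_{\mathbb{G}_m}\Gamma_{M_\Gamma}$ is measured by the largest \emph{common quotient group} of $\Gamma_{M_\zeta}$ and $\Gamma_{M_\Gamma}$, equivalently the largest Tannakian subcategory lying in both $\langle M_\zeta\rangle$ and $\langle M_\Gamma\rangle$; in a non-semisimple category this is not controlled by simple subquotients alone, and the extensions of the unit object by $C^{\otimes s}$ are exactly the objects carrying the zeta values. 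What actually closes the argument are the structural facts you mention only in passing: $\Gamma_{M_\Gamma}$ is a torus (so commutative, and $\langle M_\Gamma\rangle$ is semisimple), while $\Gamma_{M_\zeta}$ is an extension of $\mathbb{G}_m$ by a vector group on which $\mathbb{G}_m$ acts through the weights $s$. With these in hand one can either run a Goursat-type argument (any common quotient is a diagonalizable quotient of $\Gamma_{M_\zeta}$, hence a quotient of $\mathbb{G}_m$, so the common quotient has dimension at most one and the trivial upper bound forces equality), or argue as in \cite{CPTY10} and in Proposition \ref{Prop_on_Galois_grp} of this paper: lift an element $v_x$ of the unipotent kernel of $\Gamma_{M_\zeta}\twoheadrightarrow\mathbb{G}_m$ to $\gamma\in\Gamma_M$, lift a scalar $a$ to $\delta\in\Gamma_M$, and use commutativity of the gamma-side group to see that $\delta^{-1}\gamma^{-1}\delta\gamma$ lies in $\ker(\Gamma_M\to\Gamma_{M_\Gamma})$ while mapping to $v_{(a^{w}-1)x}$; this shows that kernel surjects onto the unipotent kernel (inductively, one zeta coordinate at a time) and gives the missing lower bound. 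As written, your ``standard Tannakian argument'' asserts precisely the statement that has to be proved, so you need to supply one of these two arguments before the dimension count $\dim\Gamma_M=\dim\Gamma_{M_\zeta}+\dim\Gamma_{M_\Gamma}-1$, and with it the claimed transcendence degree, is established.
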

They obtained this result by using Papanikolas' theory of $t$-motivic Galois group which we will recall later in Section \ref{Preliminaries}.
As Chang and Yu~(\cite{CY07}) had determined all algebraic relations among Carlitz zeta values and Carlitz period, the contributions of the paper \cite{CPTY10} on transcendence are the following two: 1. determing all the algebraic relations among gamma values, and 2. proving that we have no algebraic relations between Carlitz zeta values and arithmetic gamma values except for those come from the Carlitz period.

The aim of this paper is to generalize Theorem \ref{CPTY10} in two directions.
First, we deal with Thakur's multiple zeta values, generalizations of Carlitz zeta values with multi-index.
Second, we consider certain deformations of Carlitz periods, arithmetic gamma values, and multiple zeta values. 

Thakur~(\cite{T04}) defined multiple zeta values in positive characteristic (MZVs in short) by the following form for each index $(s_1, \ldots, s_n)\in\mathbb{N}^n$ ($\mathbb{N}$ is the set of positive integers):
\[
	\zeta_A(s_1, \ldots, s_d):=\sum_{\substack{\deg a_1>\cdots >\deg a_d\geq 0\\ a_1, \ldots, a_d\in A_{\plus}}}\frac{1}{a_1^{s_1}\cdots a_d^{s_d}}\in k_{\infty}.
\]
For each ${\bf s}=(s_1, \ldots, s_d)\in\mathbb{N}^d$, we define $\wt({\bf s})$ (weight of ${\bf s}$) and $\dep({\bf s})$ (depth of ${\bf s}$) by $\wt({\bf s}):=s_1+\cdots +s_d$ and $\dep({\bf s}):=d$ respectively. 
The Carlitz zeta values are no other than multiple zeta values with indices of depth $1$.

%A deformation $\Omega$ of Carlitz period $\tilde{\pi}$ is known. This is a power series in variable $t$ and the value at $t=\theta$ is the reciprocal of $\tilde{\pi}$.
%Deformations $\zeta_A(n)$, called Anderson-Thakur series, of multiple zeta values are also known. 

%Both arithmetic gamma values and MZVs can also be defined on the space $\mathbb{Z}_p\times \mathbb{C}_{\infty}^{\times}$, by the analytic continuations of Goss. 
We can consider the lifting of the aforementioned special values to deformation series. Let $t$ be an independent variable of $\theta$.
For $n\in\mathbb{Z}$, we consider the automorphism of the Laurent series field $\mathbb{C}_{\infty}((t))$, which is called Frobenius $n$-twisting:
\begin{align*}
\mathbb{C}_{\infty}((t))&\rightarrow \mathbb{C}_{\infty}((t))\\
		f:=\sum_i a_it^i&\mapsto f^{(n)}:=\sum_i a_i^{q^n}t^i.
\end{align*}  

For each $l\geq 1$, we fix the $(q^l-1)$th root of $-\theta$. In \cite{CPTY10}, they defined
$$
\Omega_l:=(-\theta)^{\frac{-q^l}{q^l-1}}\prod_{i=1}^{\infty}\biggl(1-\frac{t}{\theta^{q^{il}}}\biggr)\in\mathbb{E},
$$
where
\begin{equation}
    \mathbb{E}= \left\{\sum_{i=0}^{\infty}a_it^i\in\overline{k}\llbracket t \rrbracket \,\middle| \,\lim_{i\to\infty}|a_i|_\infty^{1/i}=0,~[k_\infty(a_0,a_1,\dots):k_\infty]<\infty\right\},
\end{equation}
the ring of entire functions.
%When $l=1$, the above is a lift $\Omega$ of $\tilde{\pi}$.   
When $t=\theta$, $\Omega:=\Omega_1$ becomes $\tilde{\pi}^{-1}$.
Inspired by \cite{CPTY10}, we define a deformation series of arithmetic gamma values for $s=\sum_{i\geq 0}s_iq^i\in\mathbb{Z}_p$ with $0\leq s_j\leq q-1$:
$$
G(s):=\prod_i\biggl(\prod_{j=0}^{i-1}\Bigl( 1-\frac{t^{q^j}}{\theta^{q^i}} \Bigr)\biggr)^{s_i}\in \mathbb{T}.
$$
Here, $\mathbb{T}$ is the subring of $\mathbb{C}_{\infty}\llbracket t \rrbracket$ consisting of power series convergent on the closed unit disc $|t|_{\infty}\leq 1$.
\begin{comment}
\begin{equation}
    \mathbb{T}:=\left\{ \sum_{i=m}^\infty a_i t^i \, \middle|\, m \in \mathbb{Z}, \,a_i \in \mathbb{C}_\infty, \,|a_i|_{\infty}\to 0 \text{ for $i \rightarrow \infty$}\right\}.
\end{equation}
\end{comment}
By the definition, we get 
$$
G(s)|_{t=\theta}=\Gamma(s).
$$
Also, Anderson and Thakur defined that of MZVs, which is known as Anderson-Thakur series (see \cite{AT09} and \cite{CGM21}), as the following form:
$$
\zeta^{\rm AT}_A(s_1, \ldots, s_d):=\sum_{i_1>\cdots >i_d\geq 0}\frac{H_{s_1-1}^{(i_1)}\cdots H_{s_d-1}^{(i_d)}}{\mathbb{L}_{i_1}^{s_1}\cdots \mathbb{L}_{i_d}^{s_d}}\in \mathbb{T}
$$
where $\mathbb{L}_{i_j}:=(t-\theta)^{(1)}\cdots(t-\theta)^{(i_j)}$ and $H_{s_i-1}\in A[t]$ is Anderson-Thakur polynomial (\cite{AT90}). 
In \cite{AT09}, the following is proved:
$$
\zeta^{\rm AT}_A(s_1, \ldots, s_d)|_{t=\theta}=\Gamma_{C}(s_1)\cdots\Gamma_{C}(s_d)\zeta_A(s_1, \ldots, s_d).
$$

%Inspired by the work in \cite{CPTY10}, we further introduce a deformations of arithmetic gamma values.
%$\Omega:=\Omega_1$ at $t=\theta$ is $\tilde{\pi}^{-1}$
We deal not only with special values at $t=\theta$ of the aforementioned power series, which are essentially the same as Carlitz period, arithmetic gamma values, and MZVs, but also with special values of a kind of derivatives of those power series.
In positive characteristic, there are several analogues of derivatives. In this paper, we use hyperderivative, which is defined as follows:
\begin{defn}\label{defnhyperderivative}
For each $n \geq0$, we define the $\mathbb{C}_\infty$-linear operator $\partial^{(n)}_t$ on $\mathbb{C}_\infty((t))$, which is called \textit{$n$-th hyperderivative}, by
\begin{align}\label{defHD}
\partial_t^{(n)}:\mathbb{C}_\infty((t))&\rightarrow\mathbb{C}_\infty((t))\\
\sum_{i=m}^\infty a_i t^i&\mapsto\sum_{i=m}^\infty \binom{i}{n} a_i t^{i-n}\nonumber 
\end{align}
where $a_i \in\mathbb{C}_\infty$, $\binom{i}{n}$ are the binomial coefficients in $\mathbb{F}_p$, and we put $\binom{i}{n}=0$ for $i <n$.
\end{defn}
In \cite{M18}, Maurischat invented the extension of $t$-motive via hyperderivative, which is so-called prolongation. This allows us to show algebraic independence among hyperderivatives of deformation series in Tate algebra, by computing the dimension of $t$-motivic Galois group associated to the pre-$t$-motive under the prolongation.  

We pick a non-negative integer $n$ and a finite set $I$ of indices such that 
        \begin{equation}
            I=\bigcup_{\mathbf{s}\in I} \operatorname{Sub}(\mathbf{s}),\label{subclosed}
        \end{equation}
        where $\operatorname{Sub}(\mathbf{s})$ is defined to be the set $\{ (s_i,\,\dots,\,s_j) \mid 1 \leq i \leq j \leq d\}$ for an index $\mathbf{s}=(s_1,\,\dots,\,s_d)$. 
        Our main result is the following:
\begin{thm}\label{intro main result}
For $n\geq 0 $ and $l \geq 1$ and a finite set $I$ of indices such that \eqref{subclosed} holds, we have
    \begin{align}
        &\trdeg_{k}k\left(\begin{matrix}\partial_{t}^{(n^\prime)}\Omega|_{t=\theta},\,\partial_{t}^{(n^\prime)}\Omega_l^{(-l^\prime)}|_{t=\theta},\\ \partial_{t}^{(n^\prime)}\zeta_{A}^{\rm{AT}}(\mathbf{s})|_{t=\theta} \end{matrix} \middle|\, 0\leq n^\prime \leq n,\,0\leq l^\prime\leq l-1,\,\mathbf{s} \in I
        \right)\\
        =&\trdeg_{k}k\left(\partial_{t}^{(n^\prime)}\Omega|_{t=\theta},\,\partial_{t}^{(n^\prime)}\Omega_l^{(-l^\prime)}|_{t=\theta} \, \middle|\, 0\leq n^\prime \leq n,\,0\leq l^\prime\leq l-1
        \right)\\
        &+\trdeg_{k}k\left(\partial_{t}^{(n^\prime)}\Omega|_{t=\theta},\,\partial_{t}^{(n^\prime)}\zeta_{A}^{\rm{AT}}(\mathbf{s})|_{t=\theta} \, \middle|\, 0\leq n^\prime \leq n,\,\mathbf{s} \in I
        \right)\\
        &-\trdeg_{k}k\left(\partial_{t}^{(n^\prime)}\Omega|_{t=\theta} \, \middle|\, 0\leq n^\prime \leq n
        \right).
    \end{align}
\end{thm}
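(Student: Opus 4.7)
\medskip

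\noindent\textbf{Proof plan.}
The plan is to realize each of the three transcendence degrees appearing in the statement as the dimension of a $t$-motivic Galois group in the sense of Papanikolas, and then to exhibit the combined Galois group as a fibre product. Concretely, I would first construct a pre-$t$-motive $\mathcal M$ whose period matrix, after evaluation at $t=\theta$, has entries generating the field on the left-hand side. The building blocks are: the Carlitz $t$-motive $C$ (responsible for $\Omega$), the rank-one motives attached to the shifted Omega series $\Omega_l^{(-l')}$ (responsible for the gamma deformations, as in \cite{CPTY10}), and the Anderson--Thakur dual $t$-motives $M_{\mathbf s}$ for $\mathbf s\in I$. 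To accommodate the hyperderivatives up to order $n$, I would then apply Maurischat's $n$-th prolongation functor $P^n$; the hypothesis \eqref{subclosed} guarantees that Namoijam's formula supplies all the requested hyperderivative periods inside the periods of $P^n\mathcal M$. By Papanikolas' theorem, each of the four transcendence degrees in the theorem equals the dimension of the Galois group of the corresponding sub-pre-$t$-motive.

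Let me denote by $\mathcal M_\gamma$, $\mathcal M_\zeta$, $\mathcal M_0$ the sub-pre-$t$-motives corresponding respectively to the gamma side plus $\Omega$, to the MZV side plus $\Omega$, and to $\Omega$ alone (all with prolongation). Then $\mathcal M=\mathcal M_\gamma+\mathcal M_\zeta$ inside the Tannakian category, and the natural map
\begin{equation}
\Gamma_{\mathcal M}\longrightarrow \Gamma_{\mathcal M_\gamma}\times_{\Gamma_{\mathcal M_0}}\Gamma_{\mathcal M_\zeta}\label{fibreproduct}
\end{equation}
is a closed immersion. The dimension of the right-hand side of \eqref{fibreproduct} is exactly the combination $\dim\Gamma_{\mathcal M_\gamma}+\dim\Gamma_{\mathcal M_\zeta}-\dim\Gamma_{\mathcal M_0}$ appearing in the theorem, so it is enough to show that \eqref{fibreproduct} is also surjective, equivalently an isomorphism.

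The heart of the proof, and the step I expect to be the main obstacle, is this surjectivity. I would verify it by the Goursat-type criterion: the map is surjective if and only if there is no proper subgroup $H\subsetneq \Gamma_{\mathcal M_\gamma}$ that is normal with quotient sharing a common quotient with $\Gamma_{\mathcal M_\zeta}$ beyond $\Gamma_{\mathcal M_0}$. To rule this out one uses the structural difference between the two sides, which is already the backbone of \cite{CPTY10}: after passing to prolongations, $\Gamma_{\mathcal M_\gamma}$ is an extension of a torus $\mathbb G_m^{?}$ by a unipotent group coming purely from the prolongation (its reductive quotient is the torus from CPTY, unchanged by $P^n$), whereas the unipotent radical of $\Gamma_{\mathcal M_\zeta}$ is generated by the MZV entries in Chang--Papanikolas--Yu style, and its reductive quotient is $\Gamma_{P^n C}$ alone. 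Any extra identification produced by a proper subgroup in \eqref{fibreproduct} would force either an algebraic relation between an arithmetic gamma deformation period and an MZV period beyond those explained by $\Omega$, contradicting Theorem \ref{CPTY10} after specialising $n=0$ and projecting, or an incompatibility between the torus (gamma) and unipotent (MZV) sides of the reductive--unipotent decomposition.

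Granting surjectivity of \eqref{fibreproduct}, the theorem follows formally: applying $\dim$ to both sides of \eqref{fibreproduct} gives
\begin{equation}
\dim\Gamma_{\mathcal M}=\dim\Gamma_{\mathcal M_\gamma}+\dim\Gamma_{\mathcal M_\zeta}-\dim\Gamma_{\mathcal M_0},
\end{equation}
and translating each dimension back to a transcendence degree via Papanikolas' theorem yields exactly the identity in the statement. Thus the work really concentrates in the Tannakian/Goursat step, where Maurischat's prolongation must be checked to preserve the torus-versus-unipotent dichotomy that separates the gamma and MZV sides.
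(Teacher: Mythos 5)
Your overall framework is the same as the paper's (realize each transcendence degree as $\dim$ of a $t$-motivic Galois group via Theorem \ref{RefinedPapanikolasThm}, embed the big group into a fibre product over the $\Omega$-part, and reduce to a dimension/surjectivity statement), but the step you yourself flag as the heart of the matter --- surjectivity of the closed immersion into $\Gamma_{\mathcal M_\gamma}\times_{\Gamma_{\mathcal M_0}}\Gamma_{\mathcal M_\zeta}$ --- is exactly the content of Theorem \ref{intro main result}, and the justification you offer does not prove it. Appealing to Theorem \ref{CPTY10} ``after specialising $n=0$ and projecting'' can only exclude relations that survive that specialisation, i.e.\ relations among depth-one zeta values and gamma values at zeroth hyperderivative order; a hypothetical relation involving $\partial_t^{(n')}$ with $n'>0$, or an Anderson--Thakur series of depth $\geq 2$, together with the gamma deformations, would be invisible to it, so the appeal is insufficient (and, as stated, close to circular, since ``no relations beyond those through $\Omega$'' is what is being proved). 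Likewise the ``torus versus unipotent'' dichotomy does not rule out a proper subgroup of the fibre product: both sides have large unipotent parts (on the gamma side the whole prolongation contributes unipotent coordinates), and a Goursat obstruction could a priori live entirely inside a common unipotent quotient, which your structural remarks do not exclude. A secondary confusion: condition \eqref{subclosed} and Namoijam's formula play different roles than you assign them --- the hyperderivative periods are supplied by Maurischat's prolongation (Proposition \ref{Uchino}), Namoijam's theorem is used only to compute the gamma-side dimension $l(n+1)$ in Theorem \ref{ThmonGamma}, and \eqref{subclosed} is needed so that the MZV motives can be added one at a time with controlled kernels.

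What the paper actually does to close this gap (Proposition \ref{Prop_on_Galois_grp}) is an induction that you would need some version of: it replaces $N$ by its $l$-th derived motive so that $\Gamma_{(\rho_n N)^{(l)}}$ becomes \emph{commutative}, enumerates $\hat I=I\times\{0,\dots,n\}$ compatibly with subindices (condition (En)) so that at each step $i$ the kernel $V_i=\ker(U_i\twoheadrightarrow U_{i-1})$ is a closed subgroup of $\mathbb{G}_a$, and then, when $\dim\Gamma_{M_i^{(l)}}$ jumps by one, shows $V_i\subset\varphi_i(\mathcal U_i)$ by a commutator trick: pick $\gamma$ lifting $v_x\in V_i$ and $\delta$ whose image in $\Gamma_{(\rho_n C)^{(l)}}$ is a non-root-of-unity scalar $a$; commutativity of the gamma side forces $\delta^{-1}\gamma^{-1}\delta\gamma\in\mathcal U_i$, while on the MZV side it equals $v_{(a^{\wt\mathbf{s}_i}-1)x}$. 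Combined with the injectivity of $\varphi_i$ (Lemma \ref{LemPhiinj}) and the two exact sequences for $V_i$, this yields $\dim\varphi_i(\mathcal U_i)\geq\dim U_i$ and hence the dimension identity. Without an argument of this kind (or some other genuine mechanism exploiting the weights and the commutativity obtained from the derived-motive/level-$l$ device, which your proposal never introduces), the Goursat step remains an assertion, so the proposal as written has a genuine gap.
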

For any finite set $I^\prime$ of indices, we can take a finite set $I$ containing $I^\prime$ such that \eqref{subclosed} holds. 
Therefore the theorem asserts the algebraic relations concerning $\Omega$ and its hyperderivatives at $t=\theta$ are only algebraic relations between hyperderivatives of $\Omega_l$ and those of Anderson-Thakur series at $t=\theta$.
The first and third terms of the right-hand side are calculated in \eqref{hyptran:Omega_1:and:omega_l} and \cite[Proof of Theorem 2.1]{M18}, respectively. 
About the second term, we have the following result. Calculating the second term in general cases seems to be a difficult problem.
\begin{thm}[{\cite[Theorem 5.19 (2)]{Ma24}}]\label{matsuzuki thm}
   If we take an index $\mathbf{s}=(s_1,\,\dots,\,s_r)\in \mathbb{Z}_{\geq 1}^{r}$ satisfying that $s_1,\,\dots,\,s_r$ are distinct, and $p \nmid s_i,\,(q-1) \nmid s_i$ for $1\leq i\leq r$, then the field extension over $\overline{k}$ generated by
    \begin{equation}
        \{ \partial_{t}^{(n^\prime)}\Omega|_{t=\theta},\,\partial_{t}^{(n^\prime)}\zeta_{A}^{\rm{AT}}(\mathbf{s})|_{t=\theta} \,  \mid 0\leq n^\prime  \leq n,\,\mathbf{s}^\prime \in \operatorname{Sub}^\prime(\mathbf{s}) \}
    \end{equation}
    has transcendental degree $(n+1)\left(\# \operatorname{Sub}^\prime(\mathbf{s}) +1\right)=(n+1)(2^r)$ over $\overline{k}$ for any integer $n \geq 0$. Equivalently, the set above is algebraically independent over $\overline{k}$.
\end{thm}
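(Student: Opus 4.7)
The plan is to realize the listed periods as coordinates of a point on the motivic side, invoke Papanikolas' theorem to equate algebraic independence with the dimension of the associated $t$-motivic Galois group, and then compute that dimension using Maurischat's prolongation construction. I outline the three main steps and flag the principal obstacle.

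\textbf{Step 1 (Setup of the motive).} For each subsequence $\mathbf{s}' \in \operatorname{Sub}'(\mathbf{s})$, the Anderson-Thakur and Chang construction furnishes a pre-$t$-motive $M(\mathbf{s}')$ whose rigid-analytic trivialization involves $\Omega$ and $\zeta_A^{\rm{AT}}(\mathbf{s}')$. I would assemble
\begin{equation}
M := \bigoplus_{\mathbf{s}' \in \operatorname{Sub}'(\mathbf{s})} M(\mathbf{s}')
\end{equation}
and apply Maurischat's $n$-th prolongation functor $P^n$ to obtain $P^n M$. A direct computation using the hyperderivative Leibniz rule shows that the entries of the period matrix of $P^n M$, specialized at $t=\theta$, consist exactly (up to elements of $\overline{k}$) of the quantities $\partial_t^{(n')}\Omega|_{t=\theta}$ and $\partial_t^{(n')}\zeta_A^{\rm{AT}}(\mathbf{s}')|_{t=\theta}$ for $0\le n'\le n$ and $\mathbf{s}'\in\operatorname{Sub}'(\mathbf{s})$.

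\textbf{Step 2 (Galois group skeleton).} The $t$-motivic Galois group $\Gamma_{P^n M}$ sits in a short exact sequence whose abelian quotient is the Galois group of the prolonged Carlitz motive; Maurischat has computed the latter to have dimension $n+1$, accounting precisely for the $\partial_t^{(n')}\Omega|_{t=\theta}$ layer. The unipotent radical $U$ encodes the Anderson-Thakur contributions, and the shape of the period matrix yields the a priori upper bound $\dim U \le (n+1)(2^r-1)$. Equality would give $\dim \Gamma_{P^n M}=(n+1)\cdot 2^r$, which by Papanikolas' theorem equals the transcendence degree.

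\textbf{Step 3 (The main obstacle).} The hard part is the opposite inequality, i.e.\ showing that $U$ is as large as possible. This amounts to proving that the extension classes attached to the pairs $(\mathbf{s}',n')$ are $\overline{k}$-linearly independent in the relevant $\operatorname{Ext}$-group, which by the standard ABP-style dictionary reduces to ruling out non-trivial $f\in \overline{k}(t)$ satisfying Frobenius difference equations of the form $f^{(-1)}-f=\sum_{\mathbf{s}',n'}c_{\mathbf{s}',n'}g_{\mathbf{s}',n'}$ with $c_{\mathbf{s}',n'}\in\overline{k}(t)$ and $g_{\mathbf{s}',n'}$ the prolonged period entries. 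The three hypotheses enter precisely here: $(q-1)\nmid s_i$ forces each $\zeta_A^{\rm{AT}}(\mathbf{s}')$-class to be non-Eulerian, the distinctness of the $s_i$ separates the different Carlitz twists so that no cross-talk occurs among the $M(\mathbf{s}')$, and $p\nmid s_i$ prevents the prolonged classes from collapsing across successive hyperderivative levels. I expect the principal technical labour to lie in verifying this non-collapsing by analyzing poles and leading Taylor coefficients at $t=\theta$ of the difference equation. Once this independence is secured, Papanikolas' theorem delivers $\trdeg_{\overline{k}}=(n+1)\cdot 2^r$, which matches the total number of listed elements and thus proves algebraic independence.
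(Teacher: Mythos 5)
This theorem is not proved in the present paper at all: it is quoted from \cite[Theorem 5.19 (2)]{Ma24}, so there is no in-paper proof to compare with. Judged on its own terms, your text is a strategy outline rather than a proof, and the decisive step is missing. Steps 1 and 2 (assembling $\bigoplus_{\mathbf{s}'}M(\mathbf{s}')$, prolonging, and reading off the upper bound $\dim\Gamma\leq (n+1)2^{r}$ from the shape of $\widetilde{\Psi}$, with Maurischat's $n+1$ for the prolonged Carlitz piece) are consistent with the standard framework used here and in \cite{Ma24}, although even the upper bound on the unipotent part requires identifying which matrix entries are shared across the summands $M(\mathbf{s}')$ and their prolongation levels, not just counting entries of the period matrix. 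But Step 3 is the entire content of the theorem, and you explicitly leave it as an expectation (``I expect the principal technical labour to lie in verifying this non-collapsing\dots''). Naming where the hypotheses $p\nmid s_i$, $(q-1)\nmid s_i$ and distinctness ``should'' enter does not establish the lower bound $\dim U\geq (n+1)(2^{r}-1)$.

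Moreover, the reduction you propose --- ruling out solutions $f\in\overline{k}(t)$ of a single equation $f^{(-1)}-f=\sum c_{\mathbf{s}',n'}g_{\mathbf{s}',n'}$ --- is an $\operatorname{Ext}^1$-with-the-trivial-object picture that is too coarse once $\dep\mathbf{s}'\geq 2$: the unipotent radical then carries a nontrivial filtration indexed by the poset of subsequences, and fullness must be proved inductively, one subsequence and one hyperderivative level at a time, using a $\mathbb{G}_m$-weight (commutator) argument of the kind carried out in Section 6 of this paper and in \cite[\S 5.3]{Ma24}, together with already known algebraic independence results for the depth-one layers and their hyperderivatives (Chang--Yu, Chang--Papanikolas--Yu, Namoijam, Maurischat); the arithmetic hypotheses enter through those inputs and through the weight bookkeeping, not through a direct pole analysis of one difference equation. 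As it stands, therefore, the proposal identifies the right machinery but has a genuine gap exactly at the point where the theorem is proved.
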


\subsection{Background in characteristic 0}
In classical case, there are attempts to get the functions which do not satisfy any algebraic differential equations, so-called the hypertranscendental functions. In 1887, H\"{o}lder (\cite{H87}) proved that Euler gamma function is a hypertranscendental function over rational function field. Later in 1901, Hilbert (\cite{Hi01}) proved the hypertranscendence of the Riemann zeta function by using the functional equation among zeta and gamma functions. Bank and Kaufman (\cite{BK76}) extended their work to the case of hypertranscendence over the field of meromorphic functions. Lately, Huang and Wai Ng (\cite{HWN20}) further shown the hypertranscendence of certain functions involving meromorphic functions and exponential functions. There is a generalization of Riemann zeta function, known as the multiple zeta function (see \cite{Mat02} for example). It is natural to ask the hypertranscendence of multiple zeta functions but we do not know that they satisfy or not.

This paper is organized as follows. In section 1, we recall fundamental notations. In section 2, we recall the prolongation and the derivation of pre-$t$-motive. In section 3, we prove the algebraic independence result among arithmetic gamma values and hyperderivatives of their deformation series with $t=\theta$, by applying Namoijam's formula. In sections 4 and 5, we construct the prolongations of pre-$t$-motives associated to arithmetic gamma values and multiple zeta values. In section 6, we determine the dimension of $t$-motivic Galois group and clarify, by using Papanikolas' theory (\cite{P08}), the transcendence degree of the field generated by deformation series of arithmetic gamma values, multiple zeta values, and their hyperderivatives.      

\section{Preliminaries}\label{Preliminaries}
\subsection{Notations}
\label{No}
We use the following notations.

\begin{itemize}
\setlength{\leftskip}{1.0cm}
\item[$\mathbb{N}=$] the set of positive integers.
\item[$q=$] a power of a prime number $p$.  
\item[$\mathbb{F}_q=$] a finite field with $q$ elements.
\item[$\theta$, $t=$] independent variables.
\item[$A=$] the polynomial ring $\mathbb{F}_q[\theta]$.
\item[$A_{+}=$] the set of monic polynomials in $A$.
\item[$A_{d+}=$] the set of elements in $A_{+}$ of degree $d$. 
\item[$k=$] the rational function field $\mathbb{F}_q(\theta)$.
\item[$k_{\infty}=$] the completion of $k$ at the infinite place $\infty$, $\mathbb{F}_q((\frac{1}{\theta}))$.
\item[$\overline{k_{\infty}}=$] a fixed algebraic closure of $k_{\infty}$.
\item[$\mathbb{C}_{\infty}=$] the completion of $\overline{k_{\infty}}$ at the infinite place $\infty$.
\item[$\overline{k}=$] a fixed algebraic closure of $k$ in $\mathbb{C}_{\infty}$.
\item[$k^{\rm sep}=$] a fixed separable closure of $k$ in $\overline{k}$.
%\item[$k'=$] $\overline{\mathbb{F}}_q(\theta)$ where $\overline{\mathbb{F}}_q$ is a fixed algebraic closure of $\mathbb{F}_q$ in $\mathbb{C}_{\infty}$.
\item[$|\cdot|_{\infty}=$] a fixed absolute value for the completed field $\mathbb{C}_{\infty}$ so that $|\theta|_{\infty}=q$.
\item[$\mathbb{T}=$] the Tate algebra over $\mathbb{C}_{\infty}$, the subring of $\mathbb{C}_{\infty}\llbracket t \rrbracket$ consisting of power series convergent on the closed unit disc $|t|_{\infty}\leq 1$.
\item[$\mathbb{L}=$] the quotient field of $\mathbb{T}$.
\item[$\mathbb{E}=$] $\{\sum_{i=0}^{\infty}a_it^i\in\overline{k}\llbracket t \rrbracket\mid \lim_{i\to\infty}|a_i|_\infty^{1/i}=0,~[k_\infty(a_0,a_1,\dots):k_\infty]<\infty\}.$
%\item[$D_i=$] $\prod^{i-1}_{j=0}(\theta^{q^i}-\theta^{q^j})\in A_{+}$ where $D_0:=1$.
%\item[$\Gamma_{n+1}=$] the Carlitz gamma value, $\prod_{i}\prod^{i-1}_{j=0}(\theta^{q^i}-\theta^{q^j})^{n_i}$ where $n = \sum_{i}n_iq^i\in\mathbb{Z}_{\geq0} \ (0\leq n_i\leq q-1)$.
\end{itemize}

%\textcolor{blue}{[check: $a_i^{q^n}$?]}

%\begin{defn}
%\textcolor{blue}{[check: second time?]}
%Let $s=\sum_{j\geq 0}s_jq^j\in\mathbb{Z}_p$. The arithmetic gamma value is defined by Goss (\cite{G80}) as the following form:
%\[
%    \Gamma(s):=\prod_i\biggl(\prod_{j=0}^{i-1}\Bigl( 1-\frac{\theta^{q^j}}{\theta^{q^i}} \Bigr)\biggr)^{s_i}\in k_{\infty}.
%\]
%\end{defn}
For nonzero elements $x, y\in\mathbb{C}_{\infty}((t))$, we write $x\sim y$ when $x/y\in\ol{k}$.
%where $s_i$ is the coefficients of the $q$-adic expansion $s=\sum_is_iq^i$.

%{\color{red}Recall dual t-motive}

\begin{defn}[\cite{CPY11}]\label{defn:higher level pre t-motive}
For $r\in\mathbb{N}$, let $\overline{k}[\sigma^{r}, \sigma^{-r}]$ be the non-commutative ring of Laurent polynomials in $\sigma^{r}$ with coefficients in $\overline{k}(t)$, subject to the relation
\[
	\sigma^rf:=f^{(-r)}\sigma^r\ \text{for all $f\in\overline{k}(t)$}.
\]
 A {\it pre-$t$-motive of level $r$} is a left $\overline{k}(t)[\sigma^r, \sigma^{-r}]$-module $M$ that is finite dimensional over $\overline{k}(t)$.
\end{defn}

Once we fix a $\overline{k}(t)$-vector space $M$ of rank $d$ with a fixed $\overline{k}(t)$-basis ${\bf m}=(m_1, \ldots, m_d)^{tr}$ and $\Phi\in \GL_{d}(\overline{k}(t))$, we can uniquely determine the pre-$t$-motive structure on $M$ by setting $\sigma^r {\bf m} = \Phi {\bf m}$ (cf. \cite[\S 3.2.3]{P08}). In this case, we call $M$ the pre-$t$-motive defined by $\Phi$.

For a pre-$t$-motive $P$ of level $r$, we put
$
    P^{\text{B}}:=\{a \in \mathbb{L}\otimes_{\overline{k}(t)}P \mid \sigma(a)=a \}
$
where the $\sigma^r $-action on $\mathbb{L}\otimes_{\overline{k}(t)}P$ is given by $\sigma(f\otimes m):=f^{(-r)}\otimes \sigma^r m$ for $f \in \mathbb{L}$ and $m \in P$, following \cite{P08}. 
We can show that $P^{\text{B}}$ is an $\mathbb{F}_{q^r}(t)$-vector space, and call it the \textit{Betti realization} of $P$.
We say that $P$ is \textit{rigid analytically trivial}, if the natural map 
\begin{equation}
\mathbb{L}\otimes_{\mathbb{F}_{q^r}(t)}P^{\text{B}} \rightarrow \mathbb{L}\otimes_{\overline{k}(t)}P
\end{equation}
is an isomorphism of $\mathbb{L}$-vector spaces.
Papanikolas obtained the following criterion for rigid analytic triviality of pre-$t$-motives, see also \cite{A86}.

\begin{prop}[{\cite[Theorem 3.3.9]{P08}}]
    Let $P$ be a pre-$t$-motive of level $r$ and dimension $d$ over $\overline{k}(t)$ defined by $\Phi \in \operatorname{GL}_d(\overline{k}(t))$. For $P$ to be rigid analytically trivial, it is necessary and sufficient that there exists a matrix $\Psi \in { \rm GL}_d (\mathbb{L})$ such that $\Psi^{(-r)}=\Phi \Psi$.
\end{prop}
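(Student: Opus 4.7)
The plan is to prove both directions by translating between bases of $\mathbb{L}\otimes_{\overline{k}(t)} P$: one coming from the fixed $\overline{k}(t)$-basis $\mathbf{m}=(m_1,\dots,m_d)^{tr}$ with $\sigma^r\mathbf{m}=\Phi\mathbf{m}$, and one coming from $\sigma$-invariants. The bridge in both directions is the matrix identity obtained by computing how $\sigma$ acts on a change-of-basis matrix.

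For sufficiency, suppose $\Psi\in\GL_d(\mathbb{L})$ satisfies $\Psi^{(-r)}=\Phi\Psi$. I would define $\mathbf{b}:=\Psi^{-1}\mathbf{m}\in (\mathbb{L}\otimes_{\overline{k}(t)} P)^d$ and compute directly, using $\sigma(f\otimes m)=f^{(-r)}\otimes\sigma^r m$, that
\begin{equation}
\sigma(\mathbf{b})=(\Psi^{-1})^{(-r)}\Phi\,\mathbf{m}.
\end{equation}
The relation $\Psi^{(-r)}=\Phi\Psi$ is exactly what turns the right-hand side back into $\Psi^{-1}\mathbf{m}=\mathbf{b}$, so each $b_i\in P^{\text{B}}$. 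Since $\Psi\in\GL_d(\mathbb{L})$, the entries of $\mathbf{b}$ form an $\mathbb{L}$-basis of $\mathbb{L}\otimes_{\overline{k}(t)}P$, and hence they generate the image of the natural map $\mathbb{L}\otimes_{\mathbb{F}_{q^r}(t)}P^{\text{B}}\to \mathbb{L}\otimes_{\overline{k}(t)}P$, showing it is surjective. Injectivity will follow from the standard fact that $\mathbb{F}_{q^r}(t)$-linearly independent elements of $P^{\text{B}}$ remain $\mathbb{L}$-linearly independent; this comes from the identity $\mathbb{L}^{\sigma}=\mathbb{F}_{q^r}(t)$ and a Wronskian-style argument on a hypothetical minimal-length dependence.

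For necessity, assume $P$ is rigid analytically trivial and choose an $\mathbb{F}_{q^r}(t)$-basis $\mathbf{b}=(b_1,\dots,b_d)^{tr}$ of $P^{\text{B}}$, which by hypothesis is also an $\mathbb{L}$-basis of $\mathbb{L}\otimes_{\overline{k}(t)}P$. Writing the change of basis as $\mathbf{b}=\Psi^{-1}\mathbf{m}$ for some $\Psi\in\GL_d(\mathbb{L})$, the condition $\sigma(\mathbf{b})=\mathbf{b}$ combined with the same computation as above gives $(\Psi^{-1})^{(-r)}\Phi=\Psi^{-1}$, equivalently $\Psi^{(-r)}=\Phi\Psi$.

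The main obstacle is the linear independence fact used in both directions: that elements of $P^{\text{B}}$ which are $\mathbb{F}_{q^r}(t)$-linearly independent remain $\mathbb{L}$-linearly independent (equivalently, $\dim_{\mathbb{F}_{q^r}(t)}P^{\text{B}}\le d$). The standard approach is to take a supposed shortest nontrivial $\mathbb{L}$-relation $\sum c_i b_i=0$ with $c_1=1$, apply $\sigma$, subtract, and conclude all $c_i^{(-r)}=c_i$, so $c_i\in\mathbb{L}^{\sigma}=\mathbb{F}_{q^r}(t)$, contradicting $\mathbb{F}_{q^r}(t)$-independence. Once this is in place, both directions of the equivalence are essentially matrix bookkeeping around the twist identity.
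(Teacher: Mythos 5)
Your proposal is correct, and since the paper does not prove this statement itself (it is quoted from \cite{P08}), the right comparison is with Papanikolas' original argument, which your proof essentially reproduces: the change-of-basis computation $\sigma(\Psi^{-1}\mathbf{m})=(\Psi^{-1})^{(-r)}\Phi\,\mathbf{m}$ in both directions, plus the shortest-relation argument showing that $\mathbb{F}_{q^r}(t)$-independent elements of $P^{\text{B}}$ stay $\mathbb{L}$-independent. The only ingredient you should state or cite explicitly rather than treat as obvious is the fixed-field identity $\mathbb{L}^{\sigma}=\mathbb{F}_{q^r}(t)$, which is itself a lemma in \cite{P08}; with that in hand your argument is complete.
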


Let us take a rigid analytically trivial pre-$t$-motive $P$ defined by $\Phi$. The matrix $\Psi$ in the proposition above is called a \textit{rigid analytic trivialization} of $\Phi$. 
Rigid analytic trivializations of $\Phi$ is not unique. If $\Psi$ and $\Psi^\prime$ are two rigid analytic trivializations of a matrix $\Phi$, then $\Psi^{-1}\Psi^\prime \in { \rm GL}_d(\mathbb{F}_{q^r}(t))$ (\cite[\S 4.1]{P08}). 
Let us write $\Psi^{-1}=\Theta=(\Theta_{ij})$. In the case when an entry $\Theta_{ij}$ converges at $t=\theta$, the value $\Theta_{ij}|_{t=\theta}$ is called a \textit{period} of $P$ (cf.~\cite{P08}). The following proposition proves that the entries of the matrices $\Psi$ we consider in the following context are entire.

\begin{prop}[{\cite[Proposition 3.1.3]{ABP04}}] \label{ABP04-3.1.3}
 For a given matrix
        $ \Phi \in \operatorname{Mat}_{d \times d}(\overline{k}[t]),$
    suppose that there exists  
    $\psi \in \operatorname{Mat}_{d\times 1}(\mathbb{T})$
    so that $ \psi^{(-r)}=\Phi \psi$.
    If  $\det \Phi|_{t=\theta} \neq 0$, then all entries of $\psi$ are entire.
\end{prop}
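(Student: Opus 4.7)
The plan is to turn the functional equation $\psi^{(-r)}=\Phi\psi$ into a recursion on the Taylor coefficients of $\psi$ and then exploit non-archimedean estimates to derive doubly-exponential decay of those coefficients.

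First I would write $\Phi(t)=\sum_{j=0}^{D}\Phi_j t^j$ with $\Phi_j\in\operatorname{Mat}_d(\overline{k})$ and $\psi(t)=\sum_{k\geq 0}\alpha_k t^k$ with $\alpha_k\in\operatorname{Mat}_{d\times 1}(\mathbb{C}_\infty)$. Comparing the coefficient of $t^k$ on both sides of $\psi^{(-r)}=\Phi\psi$ gives $\alpha_k^{q^{-r}}=\sum_{j=0}^{D}\Phi_j\alpha_{k-j}$, and raising entrywise to the $q^r$-th power yields the closed recursion
\[
\alpha_k=\sum_{j=0}^{D}\Phi_j^{(r)}\alpha_{k-j}^{(r)}.
\]

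Setting $C:=\max_j\|\Phi_j\|_\infty$ and noting $\|\Phi_j^{(r)}\|_\infty=C^{q^r}$, the non-archimedean triangle inequality supplies $|\alpha_k|\leq C^{q^r}\max_{0\leq j\leq D}|\alpha_{k-j}|^{q^r}$. If the maximum on the right were attained at $j=0$ for some nonzero $\alpha_k$, this would force $|\alpha_k|\geq C^{-q^r/(q^r-1)}$; but $\psi\in\mathbb{T}$ gives $|\alpha_k|\to 0$, so for every sufficiently large $k$ the maximum is attained at some $j\geq 1$, whence
\[
|\alpha_k|\leq C^{q^r}\max_{1\leq j\leq D}|\alpha_{k-j}|^{q^r}.
\]

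I would then iterate. Choose $N$ so large that $\epsilon_0:=\sup_{k\geq N}|\alpha_k|$ satisfies $\delta:=C^{q^r/(q^r-1)}\epsilon_0<1$. Induction on $n$ yields $|\alpha_k|\leq\epsilon_n$ for $k\geq N+nD$, with $\epsilon_{n+1}=C^{q^r}\epsilon_n^{q^r}$; solving this recursion explicitly gives $\epsilon_n=C^{-q^r/(q^r-1)}\delta^{q^{nr}}$, doubly-exponential decay in $n$. Taking $n=\lfloor (k-N)/D\rfloor$ and letting $k\to\infty$, we obtain $|\alpha_k|^{1/k}\to 0$, which is precisely the condition for $\psi$ to extend to an entire function on $\mathbb{C}_\infty$.

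The main obstacle is the bootstrapping in the second paragraph: the hypothesis $\psi\in\mathbb{T}$ (hence $|\alpha_k|\to 0$) is exactly what lets one rule out the $j=0$ contribution and pass from the rough inequality to a genuinely contracting recursion. The assumption $\det\Phi|_{t=\theta}\neq 0$ does not enter this coefficient analysis directly; its role in the larger ABP framework is to guarantee invertibility of $\Phi$ in a neighborhood of $t=\theta$, which is needed for the period evaluation $\Theta_{ij}|_{t=\theta}$ that this proposition is ultimately used to justify.
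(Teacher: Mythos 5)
Your coefficient analysis is sound and is, in spirit, the standard way the cited result is proved (note that the paper itself offers no proof here; the proposition is imported from \cite{ABP04}). The recursion $\alpha_k=\sum_{j=0}^{D}\Phi_j^{(r)}\alpha_{k-j}^{(r)}$ is correct, the bootstrap using $|\alpha_k|_\infty\to 0$ does yield $|\alpha_k|_\infty\le C^{q^r}\max_{1\le j\le D}|\alpha_{k-j}|_\infty^{q^r}$ for all large $k$ (modulo cosmetic points: one should write $\le C^{q^r}$ rather than $=$ for $\Vert\Phi_j^{(r)}\Vert_\infty$, allow the case $\alpha_k=0$, choose $N$ beyond the threshold where this refined bound is valid, and treat separately the degenerate case $D=0$, where the conclusion is that all but finitely many $\alpha_k$ vanish), and your closed form $\epsilon_n=C^{-q^r/(q^r-1)}\delta^{q^{nr}}$ is right, so $|\alpha_k|_\infty^{1/k}\to 0$ and every entry converges on all of $\mathbb{C}_\infty$.

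The genuine gap is in what ``entire'' means. In this paper, as in \cite{ABP04}, entire means membership in $\mathbb{E}$, which besides $\lim_i|a_i|_\infty^{1/i}=0$ requires the coefficients to lie in $\overline{k}$ and to generate a finite extension of $k_\infty$; your argument leaves the coefficients in $\mathbb{C}_\infty$ and establishes only the growth condition. These arithmetic conditions are part of the cited conclusion and need a further argument, which can be run on the same recursion: $\alpha_k-\Phi_0^{(r)}\alpha_k^{(r)}=\sum_{j\ge 1}\Phi_j^{(r)}\alpha_{k-j}^{(r)}$ is a separable (\'etale) system with finite solution set, so induction gives $\alpha_k\in\overline{k}$, and for large $k$ any two solutions differ by a nonzero solution of the homogeneous equation, whose norm is bounded below by a positive constant, so the small solution is unique and is fixed by the relevant isometric automorphisms, which is what pins all coefficients inside one finite extension of $k_\infty$. (For the way the proposition is actually used in this paper --- evaluating $\Psi$ and its hyperderivatives at $t=\theta$, a point outside the unit disc --- your convergence statement is the essential content.) Finally, concerning your closing remark: you are right that no determinant hypothesis enters the growth estimate, and you should not be troubled by not using it, because the hypothesis as printed is garbled --- in the paper's own applications one has $\det\Phi|_{t=\theta}=0$, e.g.\ $\Phi_l$ in \eqref{Phi} has determinant $\pm(t-\theta)$ --- the condition in \cite[Prop.~3.1.3]{ABP04} being that $\det\Phi$ vanishes, if at all, only at $t=\theta$, not that it is nonzero there; so your speculation about invertibility of $\Phi$ near $t=\theta$ is moot.
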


The theory of Papanikolas (\cite{P08}) enable us to study the transcendence of periods of rigid analytically trivial pre-$t$-motives via Tannakian duality.
Papanikolas proved that rigid analytically trivial pre-$t$-motives form a neutral Tannakian category over $\mathbb{F}_q(t)$ (for the definition of Tannakian category, we refer the readers to \cite{DM82}).

\begin{thm}[{\cite[Theorem 3.3.15]{P08}}]
The category $\mathcal{R}$ of all rigid analytically trivial pre-$t$-motive of level $r$ forms a neutral Tannakian category over $\mathbb{F}_{q^r}(t)$ with the fiber functor $P\mapsto P^{\text{B}}$.
\end{thm}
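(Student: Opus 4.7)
The plan is to verify the axioms of Deligne--Milne for a neutral Tannakian category: namely, that $\mathcal{R}$ is a rigid abelian $\mathbb{F}_{q^r}(t)$-linear tensor category, and that the functor $\omega\colon P\mapsto P^{\text{B}}$ is an $\mathbb{F}_{q^r}(t)$-linear exact faithful tensor functor into the category of finite-dimensional $\mathbb{F}_{q^r}(t)$-vector spaces. First I would construct the tensor structure. The ambient category of pre-$t$-motives of level $r$ is $\overline{k}(t)$-linear and abelian, being the category of $\sigma^r$-semilinear automorphisms on finite-dimensional $\overline{k}(t)$-vector spaces. I would define the tensor product $M\otimes_{\overline{k}(t)} N$ with diagonal $\sigma^r$-action, the internal $\Hom_{\overline{k}(t)}(M,N)$ with $(\sigma^r f)(m):=\sigma^r(f(\sigma^{-r}m))$, the dual $M^\vee:=\Hom(M,\overline{k}(t))$, and the unit object $\mathbf{1}=\overline{k}(t)$ with trivial action. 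Rigidity, i.e.\ that the evaluation and coevaluation morphisms exhibit $M^\vee$ as the dual of $M$, is a formal consequence of finite dimensionality.

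Next I would show that rigid analytic triviality is preserved by these operations and by subquotients. If $\Psi_M$, $\Psi_N$ are rigid analytic trivializations, then $\Psi_M\otimes\Psi_N$ trivializes $M\otimes N$ and $(\Psi_M^{-1})^{\operatorname{tr}}$ trivializes $M^\vee$; for subobjects one pulls them back through the isomorphism $\mathbb{L}\otimes_{\mathbb{F}_{q^r}(t)} P^{\text{B}}\xrightarrow{\sim}\mathbb{L}\otimes_{\overline{k}(t)}P$ and descends the $\sigma$-fixed $\mathbb{L}$-submodule to an $\mathbb{F}_{q^r}(t)$-subspace of $P^{\text{B}}$. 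I would then analyze the fiber functor itself. The crucial input is the fixed-field identity $\mathbb{L}^{\sigma=\Id}=\mathbb{F}_{q^r}(t)$, which together with the rigid analytic triviality isomorphism yields the essential dimension equality
\[
\dim_{\mathbb{F}_{q^r}(t)}P^{\text{B}}=\dim_{\overline{k}(t)}P.
\]
Faithfulness is then immediate: a morphism $f\colon P\to Q$ with $f(P^{\text{B}})=0$ becomes zero after base change to $\mathbb{L}$, hence vanishes by faithful flatness of $\mathbb{L}/\overline{k}(t)$. Exactness follows by tensoring a short exact sequence with the flat algebra $\mathbb{L}$, taking $\sigma$-invariants, and using the dimension equality to confirm that no dimensions are dropped. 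Tensor compatibility reduces to showing that the natural map $P^{\text{B}}\otimes_{\mathbb{F}_{q^r}(t)} Q^{\text{B}}\to(P\otimes Q)^{\text{B}}$ becomes an isomorphism after base change to $\mathbb{L}$, and is therefore already an isomorphism by a dimension count.

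I expect the main obstacle to be securing closure of $\mathcal{R}$ under the abelian operations: one must argue that kernels and cokernels of morphisms between rigid analytically trivial pre-$t$-motives are themselves rigid analytically trivial. This requires a careful descent step, extracting an $\mathbb{F}_{q^r}(t)$-structure on the image or kernel from the $\sigma$-invariants of its base change to $\mathbb{L}$, for which the key leverage is once again the fixed-field identity combined with the dimension equality above. A subsidiary technical hurdle is to promote the pointwise existence of rigid analytic trivializations to the statement that the functor $\omega$ lands in \emph{finite-dimensional} $\mathbb{F}_{q^r}(t)$-vector spaces with the correct dimension; this is exactly what the rigid analytic triviality hypothesis ensures. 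Once these closure properties are settled, the standard Tannakian formalism delivers the $t$-motivic Galois group as the affine group scheme $\Aut^{\otimes}(\omega)$ over $\mathbb{F}_{q^r}(t)$, and the reconstruction theorem completes the proof.
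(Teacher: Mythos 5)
The paper itself gives no proof of this statement --- it is quoted from Papanikolas \cite{P08}, Theorem 3.3.15 --- so the comparison is with that source, and your outline does follow essentially the same route: tensor structure with diagonal $\sigma^r$-action, duals via inverse transpose, closure of rigid analytic triviality under these operations and under subquotients, the fixed-field identity, and the dimension equality making $P\mapsto P^{\text{B}}$ an exact faithful tensor functor. One assertion, however, is wrong as written: the ambient category of pre-$t$-motives of level $r$ is \emph{not} $\overline{k}(t)$-linear. The objects are finite-dimensional $\overline{k}(t)$-vector spaces, but morphisms must commute with the $\sigma^r$-action, which is only semilinear over $\overline{k}(t)$ (it twists coefficients by $f\mapsto f^{(-r)}$); hence the Hom-sets, and in particular $\End(\mathbf{1})$, carry only an $\mathbb{F}_{q^r}(t)$-structure, with $\End(\mathbf{1})=\{a\in\overline{k}(t)\mid a^{(-r)}=a\}=\mathbb{F}_{q^r}(t)$. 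This is not cosmetic: a neutral Tannakian category over $\mathbb{F}_{q^r}(t)$ must be $\mathbb{F}_{q^r}(t)$-linear with $\End(\mathbf{1})$ equal to the base field, and if the category really were $\overline{k}(t)$-linear the theorem as stated could not hold. The rest of your argument in fact treats $\mathbb{F}_{q^r}(t)$ as the base field, so the repair is local, but you should state the linearity correctly.

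A second, smaller point concerns the step you rightly single out as the main obstacle, closure under sub- and quotient objects. The fixed-field identity $\mathbb{L}^{\sigma=\Id}=\mathbb{F}_{q^r}(t)$ together with the dimension equality is not quite the whole leverage: the key descent input is that every $\sigma$-stable $\mathbb{L}$-subspace of $\mathbb{L}\otimes_{\mathbb{F}_{q^r}(t)}P^{\text{B}}$ (with $\sigma$ acting through $\mathbb{L}$ alone) is spanned by its $\sigma$-invariant vectors, proved by the usual shortest-relation argument; only then does a $\sigma$-stable subobject $N\subset P$ acquire an $\mathbb{F}_{q^r}(t)$-form of its Betti space, giving a rigid analytic trivialization of $N$ and of $P/N$, after which your dimension count and the exactness and tensor-compatibility of $\omega$ go through as you describe. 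With these two corrections your plan coincides with the proof in \cite{P08}.
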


For a rigid analytically trivial pre-$t$-motive $P$ of level $r$, we denote by $\langle P \rangle $ the Tannakian sub-category generated by $P$ in this paper. By Tannakian duality, there exists an algebraic group, say $\Gamma_P$, such that the category $\operatorname{Rep}_{\mathbb{F}_{q^r}(t)}(\Gamma_P)$ of finite-dimensional linear representations over $\mathbb{F}_{q^r}(t)$ of it is equivalent to $\langle P \rangle $.
We call the algebraic group $\Gamma_P$ the \textit{$t$-motivic Galois group} of $P$.

%\begin{thm}[{\cite[Theorem 1.2]{C09}}]\label{C09thm}
%Fix $r\in\mathbb{N}$. Fix a matrix $\Phi\in{\rm Mat}_{n}(\overline{k}[t])$ such that $\mathrm{det}(\Phi)$ is a polynomial in $t$ with $\mathrm{det}(\Phi)|_{t=0}\neq 0$. Fix a column vector $\psi=(\psi_1, \ldots, \psi_n)^{tr}\in{\rm Mat}_{n\times 1}(\mathbb{T})$ satisfying $\psi^{(-r)}=\Phi\psi$. Let $f\in\overline{k}^{\times}$ satisfy $f\notin\overline{\mathbb{F}}_q$ and $\mathrm{det}(\Phi|_{t=f^{q^{-ri}}})\neq 0$ for all $i\in\mathbb{N}$. Then, we have the following:
%\begin{enumerate}
     %\item [(i)] For every vector $\rho\in\mathrm{Mat}_{1\times n}(\overline{k})$ such that $(\rho\psi)|_{t=f}=0$, there exists a row vector $P\in\mathrm{Mat}_{1\times n}(\overline{k}[t])$ such that $P|_{t=f}=\rho$ and $P\psi=0$.  
     %\item[(ii)] $\mathrm{tr.deg}_{\overline{k}(t)}\overline{k}(t)(\psi_1, \ldots, \psi_n)=\mathrm{tr.deg}_{\overline{k}}\overline{k}(\psi_1|_{t=f}, \ldots, \psi_n|_{t=f})$
%\end{enumerate}
%\end{thm}

%The following is confirmed in \cite{CPY11}.

Suppose that matrices $\Phi\in \operatorname{GL}_d(\overline{k}(t))$ and $\Psi\in \operatorname{GL}_d(\mathbb{L})$ for which $\Psi^{(-r)}=\Phi \Psi$ are given. Then we put $\Psi_1:=(\Psi_{ij}\otimes1) \in \operatorname{GL}_d(\mathbb{L}\otimes_{\overline{k}(t)}\mathbb{L})$, $\Psi_2:=(1\otimes\Psi_{ij})\in \operatorname{GL}_d(\mathbb{L}\otimes_{\overline{k}(t)}\mathbb{L})$, and $\tilde{\Psi}:=\Psi_1^{-1}\Psi_2$. Let us consider the algebraic sub-variety 
\begin{equation}
    \Gamma_{\Psi}:=\operatorname{Spec}\mathbb{F}_{q^r}(t)[\tilde{\Psi}_{ij},\,1/\det\tilde{\Psi}] \label{GammaPsi}
\end{equation}
 of $\operatorname{GL}_{d/\mathbb{F}_{q^r}(t)}$ over $\mathbb{F}_{q^r}(t)$, the smallest closed subscheme of $\operatorname{GL}_{d/\mathbb{F}_q(t)}$ which has $\tilde{\Psi}$ as its $\mathbb{L}\otimes_{\overline{k}(t)} \mathbb{L}$-valued point. 
 We use this variety in order to explicitly determine the $t$-motivic Galois group $\Gamma_P$ in the case where $\Psi$ is a rigid analytic trivialization of a pre-$t$-motive $P$. 
The following theorem is Chang's refinement of the Papanikolas' theorem (see \cite{C09} and \cite{P08}), which claims that the variety $\Gamma_{\Psi}$ is isomorphic to the $t$-motivic Galois group of a pre-$t$-motive $P$ if $\Psi$ is a  rigid analytic trivialization of $\Phi$ defining $P$ and has a connection with transcendence theory.

\begin{thm}[{\cite{P08}, \cite[(1.2)]{C09}}]\label{RefinedPapanikolasThm}
The scheme $\Gamma_{\Psi}$ is a closed algebraic subgroup of $\GL_n/\mathbb{F}_{q^r}(t)$, which is isomorphic to the Galois group $\Gamma_M$ over $\mathbb{F}_{q^r}(t)$. Moreover, $\Gamma_{\Psi}$ has the following properties:
\begin{enumerate}
    \item [(i)] $\Gamma_{\Psi}$ is smooth over $\ol{\mathbb{F}_{q^r}(t)}$ and is geometrically connected.
    \item [(ii)] $\dim\Gamma_{\Psi}=\trdeg_{\overline{k}(t)} \overline{k}(t)(\Psi)=\trdeg_{\overline{k}} \overline{k}(\Psi|_{t=\theta})$.
    %\item [(iii)] $Z_{\Psi}$ is a $\Gamma_{\Psi}$-torsor over $\overline{k}(t)$.
\end{enumerate}
\end{thm}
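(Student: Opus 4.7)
The plan is to follow Papanikolas' Tannakian approach for the identification of $\Gamma_\Psi$ with the $t$-motivic Galois group $\Gamma_M$, and then to use Chang's ABP-based specialization argument to move transcendence information from the function-field level $\overline{k}(t)$ down to the values at $t=\theta$.

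First I would verify that $\Gamma_\Psi$ is a closed algebraic subgroup of $\operatorname{GL}_{d/\mathbb{F}_{q^r}(t)}$. The functional equation $\Psi^{(-r)}=\Phi\Psi$ forces $\tilde{\Psi}=\Psi_1^{-1}\Psi_2$ to be invariant under the diagonal $\sigma$-action on $\mathbb{L}\otimes_{\overline{k}(t)}\mathbb{L}$, so its entries lie in the subring of $\sigma$-invariants, which is $\mathbb{F}_{q^r}(t)$; hence $\Gamma_\Psi$ from \eqref{GammaPsi} is indeed a closed subscheme of $\operatorname{GL}_{d/\mathbb{F}_{q^r}(t)}$. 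The group-scheme structure is forced by the cocycle identity $\tilde{\Psi}_{12}\cdot\tilde{\Psi}_{23}=\tilde{\Psi}_{13}$ on a triple tensor product, which translates into compatibility with the Hopf multiplication and simultaneously exhibits $V_{\Psi}:=\operatorname{Spec}\overline{k}(t)[\Psi_{ij},\,1/\det\Psi]$ as a $(\Gamma_\Psi)_{\overline{k}(t)}$-torsor with $\Psi$ as a generic point.

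Next, to identify $\Gamma_\Psi$ with $\Gamma_M$, I would invoke Papanikolas' Tannakian duality: the Betti realization $P\mapsto P^{\text{B}}$ is a fiber functor on $\langle P\rangle$ valued in $\mathbb{F}_{q^r}(t)$-modules, so its scheme of $\otimes$-automorphisms represents $\Gamma_M$, and matching Hopf algebras using the coefficients of $\Psi$ as generators yields $\Gamma_M\cong\Gamma_\Psi$. Part (i) then follows from the torsor interpretation: the coordinate ring of $V_\Psi$ embeds into the field $\mathbb{L}$, so $V_\Psi$ is integral and in fact geometrically integral, and this property descends through the isomorphism $V_\Psi\cong(\Gamma_\Psi)_{\overline{k}(t)}$ to give smoothness and geometric connectedness of $\Gamma_\Psi$ over $\overline{\mathbb{F}_{q^r}(t)}$. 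The first equality of (ii), $\dim\Gamma_\Psi=\trdeg_{\overline{k}(t)}\overline{k}(t)(\Psi)$, is immediate from the same torsor identification, since the dimension of $V_\Psi$ is the transcendence degree of its function field $\overline{k}(t)(\Psi)$.

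The main obstacle is the second equality of (ii), $\trdeg_{\overline{k}(t)}\overline{k}(t)(\Psi)=\trdeg_{\overline{k}}\overline{k}(\Psi|_{t=\theta})$, which is Chang's refinement. The inequality $\geq$ is routine by specialization: any polynomial relation over $\overline{k}(t)$ among the $\Psi_{ij}$, after clearing denominators and evaluating at $t=\theta$, yields a polynomial relation over $\overline{k}$ among the $\Psi_{ij}|_{t=\theta}$. For the reverse $\leq$ I would use an ABP-style transfer: an $\overline{k}$-polynomial relation among the specialized entries is to be lifted to an $\overline{k}(t)$-polynomial relation among the $\Psi_{ij}$ themselves. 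The lift is produced by iterating the Frobenius equation $\Psi^{(-r)}=\Phi\Psi$ and exploiting the entirety of the entries of $\Psi$ granted by Proposition \ref{ABP04-3.1.3}; a vanishing-order estimate at the points $t=\theta^{q^{nr}}$ for growing $n$ then forces the lifted identity to hold. This ABP-type transfer is the technical heart of the statement.
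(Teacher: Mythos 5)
This statement is not proved in the paper at all: it is imported verbatim from Papanikolas \cite{P08} together with Chang's refinement \cite[(1.2)]{C09}, so there is no internal argument to compare against. Your outline does follow the same route as those sources (Tannakian identification of $\Gamma_\Psi$ with $\Gamma_M$, the torsor $V_\Psi$ giving (i) and the first equality in (ii), and an ABP-type lifting of algebraic relations from the specialized values back to the functions for the second equality in (ii)), and at that level it is a faithful sketch.

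One step, however, is wrong as written. You claim that the $\sigma$-invariance $\tilde\Psi^{(-r)}=\tilde\Psi$ forces the entries of $\tilde\Psi$ to lie in $\mathbb{F}_{q^r}(t)$, "the subring of $\sigma$-invariants" of $\mathbb{L}\otimes_{\overline{k}(t)}\mathbb{L}$. That is false: the fixed ring of $\mathbb{L}\otimes_{\overline{k}(t)}\mathbb{L}$ under the diagonal twist is much larger than $\mathbb{F}_{q^r}(t)$ (the statement $f^{(-r)}=f\Rightarrow f\in\mathbb{F}_{q^r}(t)$ holds in the field $\mathbb{L}$, not in the tensor-product ring), and if the entries of $\tilde\Psi$ really were in $\mathbb{F}_{q^r}(t)$ then $\Gamma_\Psi$ would be a single rational point, i.e.\ zero-dimensional, contradicting part (ii) in all nontrivial cases. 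In the definition \eqref{GammaPsi} the scheme $\Gamma_\Psi$ is the spectrum of the $\mathbb{F}_{q^r}(t)$-subalgebra of $\mathbb{L}\otimes_{\overline{k}(t)}\mathbb{L}$ generated by the entries of $\tilde\Psi$ and $1/\det\tilde\Psi$; being a closed subscheme of $\operatorname{GL}_{d}$ over $\mathbb{F}_{q^r}(t)$ is essentially built into this definition, and the role of the $\sigma$-invariance of $\tilde\Psi$ in \cite{P08} is the subtler descent statement that the ideal of relations satisfied by $\tilde\Psi$ is $\sigma$-stable and hence generated by polynomials with $\mathbb{F}_{q^r}(t)$-coefficients. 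Relatedly, in (i) integrality of the coordinate ring of $V_\Psi$ alone does not yield smoothness in characteristic $p$; one needs geometric reducedness (Papanikolas' separability argument), after which smoothness of the group scheme follows. These are the two places where your sketch would need the actual machinery of \cite{P08} rather than the shortcuts you state.
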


Let $P$ and $P^\prime$ be rigid analytically trivial pre-$t$-motives defined respectively by the matrices $\Phi \in \operatorname{GL}_d(k(t))$ and $\Phi^\prime \in \operatorname{GL}_{d^\prime}(k(t))$ with $\Psi\in \operatorname{GL}_d(\mathbb{L})$ and $\Psi^\prime\in \operatorname{GL}_{d^\prime}(\mathbb{L})$ as their rigid analytic trivializations.
As the Tannakian categories $\langle P \rangle $ and $\langle P^\prime \rangle $ can be seen as subcategories of the category $\langle P \oplus P^\prime \rangle $ generated by their direct product, Tannakian duality yields homomorphisms $\pi : \Gamma_{P\oplus P^\prime}\twoheadrightarrow \Gamma_{P}$ and $\pi^\prime : \Gamma_{P\oplus P^\prime}\twoheadrightarrow \Gamma_{P^\prime}$ of algebraic groups. These homomorphisms are faithfully flat (\cite[Proposition 2.21]{DM82}) and hence these induce surjective homomorphisms of groups of $\overline{\mathbb{F}_{q^r}(t)}$-valued points.
We can describe these homomorphisms in terms of identifications $\Gamma_P=\Gamma_\Psi$ and $\Gamma_{P^\prime}=\Gamma_{\Psi^\prime} $ by Lemma \ref{LemmaTannakianProj} below. Although this is well-known by experts, we give a short proof for the present paper to be self-contained.

We note that the direct sum $P\oplus P^\prime$ is defined by the matrices $\Phi\oplus\Phi^\prime$
and that this matrix has a rigid analytic trivialization   $\Psi\oplus\Psi^\prime$. 
 Throughout this paper, for any square matrices $B_1$ and $B_2$ the symbol $B_1\oplus B_2$ denotes the canonical block diagonal matrix
 \begin{equation}
    \begin{pmatrix}
        B_1& O\\
        O & B_2
    \end{pmatrix}.
\end{equation}
By the definitions of groups $\Gamma_{\Psi}$, $\Gamma_{\Psi^\prime}$, and $\Gamma_{\Psi\oplus\Psi^\prime}$, the algebraic group $\Gamma_{\Psi\oplus\Psi^\prime}$ is a closed subgroup of 
\begin{equation}
    \Gamma_{\Psi}\times\Gamma_{\Psi^\prime}=\left \{ B_1\oplus B_2 \ \middle| \ B_1\in \Gamma_{\Psi},\,B_2 \in   \Gamma_{\Psi^\prime} \right\}.
\end{equation}

\begin{lem}\label{LemmaTannakianProj}
In the notations as above, the following diagram commutes:
\begin{equation}
\begin{tikzpicture}[auto]
 \node (33) at (3, 3) {$\Gamma_{P\oplus P^\prime}$};
\node (00) at (0, 0) {$\Gamma_{P}$}; \node (30) at (3, 0) {$\Gamma_{P}\times\Gamma_{ P^\prime}$}; \node (60) at (6, 0) {$\Gamma_{P^\prime}$.};

\draw[->>] (33) to node {$\pi$}(00);
\draw[->>] (33) to node {$\pi^\prime$}(60);
\draw[->>] (30) to node {$\operatorname{pr}_1$}(00);
\draw[->>] (30) to node {$\operatorname{pr}_2$}(60);

\draw[{Hooks[right]}->] (33) to node {} (30);

\label{LemmaDiagram}
\end{tikzpicture}
\end{equation}
(See \cite[Example 2.3]{Mi15} for example.)
\end{lem}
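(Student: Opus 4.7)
The plan is to unpack the three identifications of $t$-motivic Galois groups with $\Gamma_\Psi$-style schemes furnished by Theorem \ref{RefinedPapanikolasThm}, and to check that the closed immersion $\iota\colon \Gamma_{\Psi\oplus \Psi^\prime}\hookrightarrow \Gamma_\Psi\times \Gamma_{\Psi^\prime}$ arising from viewing $\Gamma_{\Psi\oplus \Psi^\prime}$ inside $\operatorname{GL}_{d+d^\prime}/\mathbb{F}_{q^r}(t)$ corresponds, under Tannakian duality, to the pair $(\pi,\pi^\prime)$. The two right-most triangles of \eqref{LemmaDiagram} involving $\operatorname{pr}_1$, $\operatorname{pr}_2$ and $\iota$ commute by construction of $\iota$, so the content reduces to verifying $\operatorname{pr}_1\circ \iota = \pi$; the argument for $\pi^\prime$ is then symmetric.

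The first step is a purely formal matrix identity. Since $\Psi\oplus \Psi^\prime$ is block-diagonal, so are $(\Psi\oplus \Psi^\prime)_1$ and $(\Psi\oplus \Psi^\prime)_2$, whence
\[
\widetilde{\Psi\oplus \Psi^\prime} \;=\; (\Psi\oplus \Psi^\prime)_1^{-1}(\Psi\oplus \Psi^\prime)_2 \;=\; \tilde \Psi \oplus \widetilde{\Psi^\prime}.
\]
By the definition \eqref{GammaPsi}, the coordinate ring of $\Gamma_{\Psi\oplus \Psi^\prime}$ is generated over $\mathbb{F}_{q^r}(t)$ by the entries of $\tilde \Psi$ and $\widetilde{\Psi^\prime}$, and $\iota$ realizes $\Gamma_{\Psi\oplus \Psi^\prime}$ inside the block-diagonal subgroup $\Gamma_\Psi\times\Gamma_{\Psi^\prime}$ described in the paragraph preceding the lemma. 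In particular, $\operatorname{pr}_1\circ\iota$ is the homomorphism that extracts the top-left block.

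The second step is to trace through the construction of the isomorphism $\Gamma_M\xrightarrow{\sim}\Gamma_\Psi$ in \cite[\S 4]{P08} to see that, under our identifications, $\pi$ is exactly this block-extraction homomorphism. The essential input is the functoriality of this isomorphism in $M$: a morphism $f\colon M\to M^{\prime\prime}$ of rigid analytically trivial pre-$t$-motives induces an $\mathbb{F}_{q^r}(t)$-linear map on Betti realizations, and the matrix of $f$ in chosen bases intertwines the corresponding rigid analytic trivializations, inducing the expected compatibility on the $\tilde\Psi$-level. Applying this to the canonical projection $P\oplus P^\prime\twoheadrightarrow P$ — which is a morphism of pre-$t$-motives because $\Phi\oplus \Phi^\prime$ is block-diagonal — together with the basis of $P\oplus P^\prime$ obtained by concatenating the chosen bases of $P$ and $P^\prime$, the induced homomorphism $\Gamma_{\Psi\oplus \Psi^\prime}\to \Gamma_\Psi$ is precisely the top-left block map $\operatorname{pr}_1\circ\iota$. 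By Tannakian duality, this induced map is $\pi$, and the left triangle of \eqref{LemmaDiagram} commutes.

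The only non-routine input is the functoriality of $\Gamma_M\cong \Gamma_\Psi$ with respect to morphisms of pre-$t$-motives. This is implicit in \cite[\S 4]{P08}, but it deserves an explicit statement here, since the whole argument amounts to applying it to the projection and inclusion morphisms attached to a direct sum. Once this functoriality is granted, everything else is the block-matrix bookkeeping of the first step.
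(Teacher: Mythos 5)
Your proposal is correct and follows essentially the same route as the paper: both reduce the lemma to identifying $\pi$ (and symmetrically $\pi^\prime$) with the top-left block extraction under the identification $\Gamma_{P}\cong\Gamma_{\Psi}$, the two triangles involving $\operatorname{pr}_1,\operatorname{pr}_2$ being commutative by construction. The single step you defer to an unproven ``functoriality'' of that identification is precisely what the paper's proof carries out concretely, by comparing the two $\Gamma_{P\oplus P^\prime}$-actions on $P^{\text{B}}$ using the explicit action $\Psi^{-1}\mathbf{p}\mapsto(\Psi\gamma)^{-1}\mathbf{p}$ of \cite[Theorem 4.5.3]{P08} and the fact that $\langle P\rangle$ is generated by $P$.
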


\begin{proof}%\textcolor{blue}{[check: can we cite Matsuzuki paper?]}
The equivalence $\langle {P\oplus P^\prime} \rangle \simeq \operatorname{Rep}_{\mathbb{F}_q(t)}(\Gamma_{P\oplus P^\prime})$ of categories yields the $\Gamma_{P\oplus P^\prime}(R)$-action on the Betti realization $R\otimes_{\mathbb{F}_{q^r}(t)}(P\oplus P^\prime)^B$ of the pre-$t$-motive $P\oplus P^\prime$ given by
\begin{equation}
    \Psi_{P\oplus P^\prime}^{-1}\mathbf{p}\rightarrow (\Psi_{P\oplus P^\prime}\gamma)^{-1}\mathbf{p},\quad \gamma \in  \Gamma_{P\oplus P^\prime}(R),
\end{equation}
where $\mathbf{p}$ is the $\overline{k}(t)$-basis of $P$ corresponding to $\Phi \oplus \Phi^\prime$ for any $\mathbb{F}_q(t)$-algebra $R$ (\cite[Theorem 4.5.3]{P08}).
The equivalence $\langle {P} \rangle \simeq \operatorname{Rep}_{\mathbb{F}_q(t)}(\Gamma_{P})$ also gives us 
 a $\Gamma_{P}(R)$-action on $R\otimes_{\mathbb{F}_q(t)}P^B$ in the similar way. 

We consider the two $\Gamma_{P\oplus P^\prime}$-actions on $P^B$. The first one is the sub-representation of the $\Gamma_{P\oplus P^\prime}(R)$-action $$\Gamma_{P\oplus P^\prime}(R) \curvearrowright R\otimes_{\mathbb{F}_q(t)}(P\oplus P^\prime)^B.$$  
This comes from $P \in \langle {P\oplus P^\prime} \rangle$ and the equivalence $\langle {P\oplus P^\prime} \rangle \simeq \operatorname{Rep}_{\mathbb{F}_q(t)}(\Gamma_{P\oplus P^\prime})$, hence corresponds to $\pi :\Gamma_{P\oplus P^\prime} \twoheadrightarrow \Gamma_{P}$.
The second one is induced by the action $\Gamma_{P} \curvearrowright R\otimes_{\mathbb{F}_{q^r}(t)}P^B$ via the surjection $\operatorname{pr}_1|_{\Gamma_{P\oplus P^\prime}}:\Gamma_{P\oplus P^\prime}\rightarrow \Gamma_{P}$, which corresponds to the morphism $\Gamma_{P\oplus P^\prime} \hookrightarrow \Gamma_{P}\times\Gamma_{ P^\prime} 
\overset{\rm{pr}_1}{\twoheadrightarrow}
\Gamma_{P}$.
We can see that these two are the same one. 
As $\langle P \rangle $ is generated by $P$, left triangle of Lemma \ref{LemmaTannakianProj} is commutative. The commutativity of the right triangle is proved by similar arguments.
\end{proof}

\section{Prolongations of pre-$t$-motives and derived pre-$t$-motives}
In this section, we review two manipulations on pre-$t$-motives: 
Maurischat's theory on prolongations and Chang-Papanikolas-Yu's theory on derivation. 
The former is used to obtain period interpretations the special values of hyperderivatives of rigid analytic trivializations. The latter is used to simplify the structure of pre-$t$-motives.

\subsection{Prolongations of pre-$t$-motives}\label{subsectionprolongation}

%We study transcendence properties of Taylor coefficients of the Anderson-Thakur series and $t$-motivic CMPLs at $t=\theta$. These coefficients arise from the periods of some concrete pre-$t$-motives $\rho_n M[\mathbf{u},\,\mathbf{s}]$. 
%In this section, we construct these pre-$t$-motives $\rho_n M[\mathbf{u},\,\mathbf{s}]$ by considering the pre-$t$-motives $M[\mathbf{u},\,\mathbf{s}]$'s in Section \ref{subsectionATseries}, which gives us period interpretations of MZV's and special values of CMPLs introduced in \cite{A09} and \cite{C14}, and applying the technique called prolongations, which is introduced by Maurischat~\cite{M18}, to pre-$t$-motives $M[\mathbf{u},\,\mathbf{s}]$'s.

%Based on pioneering works such as \cite{Brownawell1999} and \cite{Brownawell2000} on hyperderivatives of $t$-module, 

Maurischat (\cite{M18}) introduced the technique called prolongations. 
Let us mention that Namoijam and Papanikolas (\cite{NP22}) studied hyperderivatives of entries of rigid analytic trivialiations of matrices defining pre-$t$-motives. We review the prolongation in the language of pre-$t$-motives in this subsection.

%and obtain period interpretations of Taylor coefficients at $t=\theta$ of $t$-motivic CMPLs and of Anderson-Thakur series as applications. 

%{\color{black}The core objects in this paper are the hyperderivatives of $t$-motivic MZV's and $t$-motivic CMPLs, and they arise from the rigid analytic trivializations of some concrete pre-$t$-motives, which are prolongations of the pre-$t$-motives constructed in X.XX. Therefore, in what follows we first recall the notion of hyperderivatives and prolongations of pre-$t$-motives introduced by Maurischat. }[cite:Maurischat]

%\begin{defn}
%For each $n \geq0$, we define the $\mathbb{C}_\infty$-linear operator $\partial^{(n)}$ on $\mathbb{C}_\infty((t))$, which is called \textit{$n$-th hyperderivative}, by
%\begin{align}\label{defHD}
%\partial^{(n)}:\mathbb{C}_\infty((t))&\rightarrow\mathbb{C}_\infty((t))\\
%\sum_{i=m}^\infty a_i t^i&\mapsto\sum_{i=m}^\infty \binom{i}{n} a_i t^{i-n}\nonumber 
%\end{align}
%where $a_i \in\mathbb{C}_\infty$, $\binom{i}{n}$ are the binomial coefficients modulo $p$, and we put $\binom{i}{n}=0$ for $i <n$.
%\end{defn}

Recalling the definition of the hyperderivatives (Definition \ref{defnhyperderivative}), we have the following analogue of the Leibniz rule called \textit{generalized Leibniz rule} (cf. \cite{M22}):
\begin{equation}
   \partial_t^{(n)}(f_1\cdots f_r)=\sum_{j_1+j_2+\cdots+j_s=n}\partial_t^{(j_1)}f_1\partial_t^{(j_2)}f_2\cdots \partial_t^{(j_r)}f_r \label{Leibniz}
\end{equation}
for $n \geq 0$ and $f_1,\,\dots,\,f_r \in \mathbb{C}_\infty((t))$.

For any matrix $B=(b_{ij})$ with entries in $\mathbb{C}_\infty((t))$, we define $\partial_t^{(n)}B:=(\partial_t^{(n)}b_{ij})$. %We extend the definition of $\partial^{(n)}$ to the set $\operatorname{Mat}_{d\times d^\prime}\left(\mathbb{C}_\infty((t))\right)$ of matrices with entries in $\mathbb{C}_\infty((t))$ by the entry-wise operation. 
Using equation \eqref{Leibniz}, we can show that the map \begin{equation}
    \rho_{n}:\operatorname{Mat}_{d\times d}\bigl(\mathbb{C}_\infty((t))\bigr) \rightarrow \operatorname{Mat}_{d(n+1)\times d(n+1)}\bigl(\mathbb{C}_\infty((t))\bigr)
\end{equation}
    defined by
\begin{equation}
    \rho_n(X):=
    \begin{pmatrix}
   X & 0 &\cdots&\cdots&0\\
   \partial_t^{(1)}X & X&0&\cdots&0\\
   \vdots& \vdots&\ddots&&\vdots \\
   \partial_t^{(n-1)}X & \partial_t^{(n-2)}X&\cdots&X&0\\
   \partial_t^{(n)}X &\partial_t^{(n-1)}X &\cdots&\partial_t^{(1)}X &X
\end{pmatrix}
\end{equation}
is a homomorphism of $\mathbb{C}_\infty((t))$-algebras for each $n \geq 0$ see \cite{M22}.

%Next, we recall the notion of prolongation of pre-$t$-motives following \cite{Maurischat2022}. Our exposition on prolongations here is on pre-$t$-motives, not on $t$-motives or dual $t$-motives. However, all arguments are the same as that for dual $t$-motives.

\begin{defn}
    For a pre-$t$-motive defined by a matrix $\Phi \in \operatorname{Mat}_r(\overline{k}[t])\cap \operatorname{GL}_r(\overline{k}(t))$, then its $n$-th prolongation $\rho_n P$ is the pre-$t$-motive defined by the matrix $\rho_n \Phi\in \operatorname{Mat}_{(n+1)r}(\overline{k}[t])\cap \operatorname{GL}_{(n+1)r}(\overline{k}(t))$.
\end{defn}
If we have an equation $\Psi^{(-1)}=\Phi \Psi$, then we have $(\rho_n\Psi)^{(-1)}=(\rho_n\Phi)\cdot( \rho_n\Psi)$ for each $n$ as $\rho_n$ is a homomorphism and commutes with $(-1)$-th fold twisting. Therefore, if $P$ is rigid analytically trivial then it follows that so is $\rho_n P$.

By the following proposition, the hyperderivative $\partial_t^{(i)}f$ of $f\in \mathbb{T}$ converge at $t=\theta$ if $f$ does.
\begin{prop}[{\cite[Corollary 2.7]{US98}}]\label{Uchino}
    %\textcolor{blue}{[check: not required?]}
   Suppose $f \in \mathbb{T}$ converges at $t=\theta$ and we write
   \begin{equation}
  f=\sum_{i=0}^\infty a_i (t-\theta)^i\in \mathbb{T},
  \end{equation}
  then its hyperderivative $\partial_t^{(i)}f$ also converges at $t=\theta$ for each $i$ and we have $\partial_t^{(i)}f|_{t=\theta}=a_i$.
\end{prop}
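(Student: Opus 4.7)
The plan is straightforward: first handle hyperderivatives of powers of $(t-\theta)$, then bootstrap to the full series by termwise hyperdifferentiation, and finally evaluate at $t=\theta$.

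First, I would establish the basic identity $\partial_t^{(n)}(t-\theta)^i = \binom{i}{n}(t-\theta)^{i-n}$ for all $i,n\geq 0$. This follows either by iterating the generalized Leibniz rule \eqref{Leibniz} on the product of $i$ copies of $(t-\theta)$, or by expanding $(t-\theta)^i$ in $t$ via the binomial theorem and applying the coefficient-wise definition \eqref{defHD} of $\partial_t^{(n)}$. A key auxiliary observation is that in $\mathbb{C}_\infty$ one has $|\binom{i}{n}|_\infty\leq 1$, since binomial coefficients are reductions modulo $p$ of integers.

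Second, I would justify termwise hyperdifferentiation of $f=\sum_{i=0}^\infty a_i(t-\theta)^i$. The hypothesis that $f$ converges at $t=\theta$ means the series makes sense on some disc around $\theta$, so $|a_i|_\infty r^i\to 0$ for some $r>0$; combined with $|\binom{i}{n}|_\infty\leq 1$, the formal series $\sum_{i\geq n}a_i\binom{i}{n}(t-\theta)^{i-n}$ converges on the same disc. To identify this with $\partial_t^{(n)}f$ as defined in \eqref{defHD}, I would rewrite $f$ in the variable $t$ using the expansion $(t-\theta)^i=\sum_{k=0}^{i}\binom{i}{k}(-\theta)^{i-k}t^k$, interchange the two sums by a Fubini-type argument valid under the Tate-algebra convergence $f\in\mathbb{T}$, apply $\partial_t^{(n)}$ coefficient-wise, and then re-expand in $(t-\theta)$; the resulting series matches $\sum_{i\geq n}a_i\binom{i}{n}(t-\theta)^{i-n}$ by the first step and the uniqueness of the expansion.

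Third, evaluating at $t=\theta$ annihilates every term with $i>n$, while the terms with $i<n$ are already zero by the convention $\binom{i}{n}=0$ for $i<n$. Only the $i=n$ term survives, and it contributes $\binom{n}{n}a_n=a_n$, giving the desired equality $\partial_t^{(n)}f|_{t=\theta}=a_n$.

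The main obstacle is the second step: rigorously commuting the change of variable $t\mapsto(t-\theta)+\theta$ with the coefficient-wise operator $\partial_t^{(n)}$, which is a priori defined only with respect to $t$. This is really a non-archimedean analytic continuation argument, but can be streamlined by observing that $(\partial_t^{(n)})_{n\geq 0}$ is the unique iterative higher derivation on $\mathbb{C}_\infty((t))$ with $\partial_t^{(1)}t=1$ and $\partial_t^{(n)}t=0$ for $n\geq 2$; any absolutely convergent change of variable is then automatically respected. Once this is in hand, extracting $a_n$ as the Taylor coefficient at $\theta$ is mechanical.
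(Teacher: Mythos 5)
Your argument is essentially correct, but note that the paper does not prove this proposition at all: it is quoted directly from Uchino--Satoh \cite[Corollary 2.7]{US98}, so there is no in-paper proof to compare against, and a self-contained argument like yours is a reasonable substitute. Your route (compute $\partial_t^{(n)}(t-\theta)^i=\binom{i}{n}(t-\theta)^{i-n}$, hyperdifferentiate the $(t-\theta)$-expansion termwise, evaluate at the center) is the standard one, and every estimate you need is available because $\bigl|\binom{i}{n}\bigr|_\infty\le 1$. The one place to tighten is your justification of the interchange/termwise step: it is not the Tate-algebra condition $f\in\mathbb{T}$ that makes the Fubini-type argument work, since $|\theta|_\infty=q>1$ lies outside the closed unit disc; what you need (and what the hypothesis ``$f$ converges at $t=\theta$'' provides) is that the $t$-expansion $f=\sum_i c_it^i$ satisfies $|c_i|_\infty q^i\to 0$, i.e.\ convergence on the disc $|t|_\infty\le q$, which dominates all terms $\binom{i}{k}c_i\theta^{i-k}(t-\theta)^k$ appearing in the re-expansion. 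With that bound in hand the proof can be shortened: re-expanding $t^i=\bigl((t-\theta)+\theta\bigr)^i$ and invoking uniqueness of the expansion at $\theta$ gives $a_n=\sum_{i\ge n}\binom{i}{n}c_i\theta^{i-n}$, and the right-hand side is literally the value at $t=\theta$ of the $t$-expansion of $\partial_t^{(n)}f$, which converges there again by $\bigl|\binom{i}{n}\bigr|_\infty\le 1$; this yields both assertions simultaneously. Finally, your appeal to uniqueness of the iterative higher derivation is fine in spirit, but the claim that ``any absolutely convergent change of variable is automatically respected'' still rests on continuity of $\partial_t^{(n)}$ for the Gauss norm on the larger disc, which is the same binomial-coefficient estimate, so it streamlines the write-up without removing the analytic input.
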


\subsection{Derived pre-$t$-motives}\label{subsectionderived}
In \cite{CPY11}, they introduced the techniques of derived pre-$t$-motives, which we recall in this subsection.
In \cite{CPTY10}, they employed this technique to simplify the defining matrix of pre-$t$-motives which have gamma values as their periods.
%This technique enables us to simplify the pre-$t$-motive associated to arithmetic gamma values so that the rigid analitic trivialization becomes diagonal matrix.

\begin{defn}[{\cite{CPY11}}] 
\label{Defderived}
    For the pre-$t$-motives $P$ of level $r$ defined by a matrix $\Phi$ in $\operatorname{GL}_d(\overline{k}(t))$, its $s$-th derived pre-$t$-motive $P^{(s)}$ is defined to be the pre-$t$-motive of level $rs$ whose $\sigma^{rs}$-action is represented by the matrix
    \begin{equation}        
    \Phi^\prime:=\Phi^{(-sr+r)}\Phi^{(-sr+2r)}\cdots\Phi^{(-r)}\Phi \in \operatorname{GL}_d(\overline{k}(t)).
    \end{equation}
\end{defn}

Let $P$ be the pre-$t$-motive of level $r$ defined by a matrix $\Phi$ and let $\Psi$ be a rigid analytic trivialization of $\Phi$, that is, we have $\Psi^{(-r)}=\Phi \Psi$. 
We can show that the matrix $\Psi$ is also a rigid analytic trivialization of $\Phi^\prime:=\Phi^{(-sr+r)}\Phi^{(-sr+2r)}\cdots\Phi^{(-r)}\Phi$, which defines the $s$-th derived pre-$t$-motive $P^{(s)}$ of $P$ as follows:
 \begin{align}
    \Psi^{(-rs)}&=(\Psi^{(-r)})^{(-sr+r)}=(\Phi \Psi)^{(-sr+r)}\\
    &=(\Phi^{(-r)} \Psi^{(-r)})^{(-sr+2r)}=(\Phi^{(-r)}\Phi \Psi)^{(-sr+2r)}=\cdots=\Phi^\prime \Phi.
\end{align}
For any $s \geq 1$ and pre-$t$-motive $P$ of level $r$ whose defining matrix has rigid analytic trivialization $\Psi$, we have the equation:
\begin{equation} \label{derivedsameGaloisgroup}
    \Gamma_{P}=\Gamma_\Psi=\Gamma_{P^{(s)}}.
\end{equation}
of algebraic group over $\overline{\mathbb{F}_q(t)}$ by Theorem \ref{RefinedPapanikolasThm} of Papanikolas (more precisely, we have $\Gamma_{P^{s}}=\Gamma_{P} \times_{\operatorname{Spec}\mathbb{F}_{q^r}(t)}\operatorname{Spec}\mathbb{F}_{q^{rs}}(t)$).

\begin{eg}
The $s$-th derived Carlitz motive $C$ is defined by
\begin{equation}
   \Phi=\left((t-\theta^{(-s+1)})(t-\theta^{(-s+2)})\cdots(t-\theta^{(-1)})(t-\theta)\right)\in\GL_{1\times 1}(\ol{k}(t)).
\end{equation}
This has a rigid analytic trivialization $(\Omega)\in\GL_{1\times 1}(\mathbb{L})$.
\end{eg}

Let $P$ be a pre-$t$-motive and take non-negative integers $n$ and $s$. As the map $\rho_n$ commutes with Frobenius twisting and is a ring homomorphism, the pre-$t$-motives $\rho_n (P^{(s)})$ and $(\rho_n P)^{(s)}$ coincide with each other, that is, we can commute the prolongation and the derivation of pre-$t$-motives.

\section{Hypertranscendence of arithmetic gamma values}\label{Hypertranscendence of arithmetic gamma values}
In this section, we introduce the relations among deformation series $\Omega_l$ and $G(s)$, and Namoijam's formula in \cite{N21}. By using them we will prove algebraic independence among certain family of $G(s)$ and its hyperderivatives with $t=\theta$. We also prove algebraic independence of certain $\Gamma(s)$ and their hyperderivatives. 

%We consider a deformation series $G$ of arithmetic gamma value defined as follows. 
%For $s\in\mathbb{N}$ with the $q$-adic expansion $s=\sum_is_iq^i$, 
We set $\mathbb{D}_i$ by
$$
    \mathbb{D}_i:=\begin{cases}
                    &(1-t/\theta^{q^i})(1-t/\theta^{q^{i-1}})^q\cdots (1-t/\theta^q)^{q^{i-1}}\quad \text{if $i\geq 1$,} \\
                    & 1\quad \text{if $i=0$}.
                  \end{cases}
$$
%We note that $G(s)|_{t=\theta}=\Gamma(s)$.

%\textcolor{blue}{[mention the case where $t=\theta$]}

Recall that we write $x\sim y$ when $x/y\in\ol{k}$ for nonzero elements $x, y\in\mathbb{C}_{\infty}((t))$.
Then we can get the followings:

\begin{prop}\label{prop:periodpart}
    Fix $l\in\mathbb{Z}_{\geq 2}$. We have
    \[
        \frac{G(1/(1-q^l))}{G(q^{l-1}/(1-q^l))^{q}}\sim\Omega_l.
    \]
\end{prop}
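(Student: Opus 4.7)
The plan is to compute both $G(1/(1-q^l))$ and $G(q^{l-1}/(1-q^l))^q$ directly from the definition, compare them termwise, and observe that the quotient equals $\Omega_l$ times an element of $\overline{k}$.

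First, I would pin down the $\mathbb{Z}_p$-digit expansions needed to invoke the definition of $G(s)$. Since $|q^l|_p<1$, the geometric series gives
\begin{equation}
\frac{1}{1-q^l}=\sum_{m\geq 0}q^{ml},\qquad \frac{q^{l-1}}{1-q^l}=\sum_{m\geq 1}q^{ml-1},
\end{equation}
so the digit of $1/(1-q^l)$ at position $j$ equals $1$ exactly when $l\mid j$, and the digit of $q^{l-1}/(1-q^l)$ at position $j$ equals $1$ exactly when $j\equiv -1\pmod l$ (with $j\geq l-1$). Substituting into the defining product for $G$ yields
\begin{align}
G\!\left(\tfrac{1}{1-q^l}\right)&=\prod_{m\geq 1}\prod_{j=0}^{ml-1}\Bigl(1-\tfrac{t^{q^j}}{\theta^{q^{ml}}}\Bigr),\\
G\!\left(\tfrac{q^{l-1}}{1-q^l}\right)&=\prod_{m\geq 1}\prod_{j=0}^{ml-2}\Bigl(1-\tfrac{t^{q^j}}{\theta^{q^{ml-1}}}\Bigr).
\end{align}

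Next I would exploit the characteristic-$p$ identity $(1-t^{q^j}/\theta^{q^i})^q=1-t^{q^{j+1}}/\theta^{q^{i+1}}$, which holds because $q$ is a power of $p$ and raising to $q$ commutes with the binomial expansion. Applying this term-by-term and then reindexing $j\mapsto j'=j+1$ gives
\begin{equation}
G\!\left(\tfrac{q^{l-1}}{1-q^l}\right)^{q}=\prod_{m\geq 1}\prod_{j'=1}^{ml-1}\Bigl(1-\tfrac{t^{q^{j'}}}{\theta^{q^{ml}}}\Bigr),
\end{equation}
which differs from $G(1/(1-q^l))$ only in that the $j=0$ factor is missing for each $m$. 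Dividing cleanly telescopes the inner product and leaves
\begin{equation}
\frac{G(1/(1-q^l))}{G(q^{l-1}/(1-q^l))^{q}}=\prod_{m\geq 1}\Bigl(1-\tfrac{t}{\theta^{q^{ml}}}\Bigr).
\end{equation}

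Finally, comparing with the definition of $\Omega_l$, the right-hand side equals $(-\theta)^{q^l/(q^l-1)}\cdot\Omega_l$. Since the factor $(-\theta)^{q^l/(q^l-1)}$ is a fixed $(q^l-1)$-th root (times a sign), it lies in $\overline{k}$, so the quotient is $\sim \Omega_l$ as claimed. The only conceptual step is the Frobenius trick $(1-t^{q^j}/\theta^{q^i})^q=1-t^{q^{j+1}}/\theta^{q^{i+1}}$ that converts the $q$-th power on the denominator into a shift of indices; everything else is bookkeeping on the digit expansions and a telescoping cancellation.
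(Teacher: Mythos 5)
Your proof is correct and takes essentially the same route as the paper: the paper simply packages your Frobenius index-shift into the recursion $\mathbb{D}_i=(1-t/\theta^{q^i})\mathbb{D}_{i-1}^q$ for the partial products $\mathbb{D}_i=\prod_{j=0}^{i-1}(1-t^{q^j}/\theta^{q^i})$, so that $G(1/(1-q^l))=\prod_{i\geq 0}\mathbb{D}_{il}$ and $G(q^{l-1}/(1-q^l))^q=\prod_{i\geq 1}\mathbb{D}_{il-1}^q$ telescope exactly as in your computation. Your digit-expansion bookkeeping, the cancellation leaving $\prod_{m\geq 1}(1-t/\theta^{q^{ml}})$, and the final comparison with $(-\theta)^{q^l/(q^l-1)}\Omega_l$ all match the paper's argument.
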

\begin{proof}
  By $\mb{D}_i=(1-t/\theta^{q^i})\mb{D}^q_{i-1}$, we have
    \begin{align*}
        \frac{G(1/(1-q^l))}{G(q^{l-1}/(1-q^l))^{q}}=\frac{\prod_{i=0}^{\infty}\mb{D}_{il}}{\prod_{i=0}^{\infty}\mb{D}^{q}_{(i+1)l-1}}=\mb{D}_0\prod_{i=1}^{\infty}\frac{\mb{D}_{il}}{\mb{D}^{q}_{il-1}}=\prod_{i=1}^{\infty}\biggl(1-\frac{t}{\theta^{q^{il}}}\biggr)=(-\theta)^{\frac{q^l}{q^l-1}}\Omega_l.
    \end{align*}
\end{proof}
%\textcolor{blue}{[mention the case where $t=\theta$]}

%The above proposition is generalized to the following form:
\begin{prop}\label{prop:quasiperiodpart}
    Fix $l\in\mathbb{Z}_{\geq 2}$. For each $1\leq j\leq l-1$, we have
    \[
        \frac{G(q^j/(1-q^l))}{G(q^{j-1}/(1-q^l))^{q}}\sim\Omega_l^{(-(l-j))}.
    \]
\end{prop}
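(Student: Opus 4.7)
The plan is to mimic the computation in the proof of Proposition \ref{prop:periodpart}, expanding both numerator and denominator as infinite products of the $\mathbb{D}_i$'s, collapsing the ratio via the recursion $\mathbb{D}_i=(1-t/\theta^{q^i})\mathbb{D}_{i-1}^q$, and then recognizing the resulting product as the infinite-product part of $\Omega_l^{(-(l-j))}$.

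First I would write down the base-$q$ expansion in $\mathbb{Z}_p$ of the Goss index. Since $1/(1-q^l)=1+q^l+q^{2l}+\cdots$, the expansion of $q^j/(1-q^l)$ has digits $s_i=1$ precisely when $i\in\{j,j+l,j+2l,\dots\}$ and digits $0$ otherwise. Plugging into the definition of $G$, this gives
\[
G\!\left(\tfrac{q^j}{1-q^l}\right)=\prod_{m=0}^{\infty}\mathbb{D}_{j+ml},\qquad G\!\left(\tfrac{q^{j-1}}{1-q^l}\right)^{q}=\prod_{m=0}^{\infty}\mathbb{D}_{j-1+ml}^{q}
\]
(for the $j=1$ case we use the convention $\mathbb{D}_0=1$, which is the base step of the recursion).

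The second step uses $\mathbb{D}_{j+ml}=(1-t/\theta^{q^{j+ml}})\mathbb{D}_{j-1+ml}^{q}$, valid term by term since $j\geq 1$. Hence the ratio telescopes into
\[
\frac{G(q^j/(1-q^l))}{G(q^{j-1}/(1-q^l))^{q}}=\prod_{m=0}^{\infty}\Bigl(1-\frac{t}{\theta^{q^{j+ml}}}\Bigr).
\]

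Finally I would compare this product with $\Omega_l^{(-(l-j))}$. Since the Frobenius $(-(l-j))$-twist takes $\theta$ to $\theta^{q^{-(l-j)}}$ and fixes $t$, we get
\[
\Omega_l^{(-(l-j))}=(-\theta)^{-\frac{q^{j}}{q^l-1}}\prod_{i=1}^{\infty}\Bigl(1-\frac{t}{\theta^{q^{il-(l-j)}}}\Bigr)=(-\theta)^{-\frac{q^{j}}{q^l-1}}\prod_{m=0}^{\infty}\Bigl(1-\frac{t}{\theta^{q^{j+ml}}}\Bigr)
\]
after the reindexing $m=i-1$. The infinite products agree and the remaining scalar factor lies in $\overline{k}$, which is exactly the content of the relation $\sim$. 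There is no real obstacle here; the only thing to check carefully is the bookkeeping of the indices so that $j-1\ge 0$ makes the denominator legitimate for every $m\geq 0$, and that the twisting convention $f^{(-n)}$ (raising coefficients to the $q^{-n}$-th power) is applied consistently to the exponents of $\theta$.
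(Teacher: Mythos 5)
Your proposal is correct and follows essentially the same route as the paper: both expand $G(q^j/(1-q^l))$ and $G(q^{j-1}/(1-q^l))^q$ as products of the $\mathbb{D}_{ml+j}$, telescope via $\mathbb{D}_i=(1-t/\theta^{q^i})\mathbb{D}_{i-1}^q$, and identify the resulting product $\prod_{m\geq 0}(1-t/\theta^{q^{j+ml}})$ with $\Omega_l^{(-(l-j))}$ up to the algebraic factor $(-\theta)^{q^j/(q^l-1)}$. The extra bookkeeping you include (the $p$-adic digit expansion and the $j=1$ case using $\mathbb{D}_0=1$) is exactly what the paper leaves implicit, so there is nothing to correct.
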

\begin{proof}
By $\mb{D}_i=(1-t/\theta^{q^i})\mb{D}^q_{i-1}$, we get
\begin{align*}
\frac{G(q^j/(1-q^l))}{G(q^{j-1}/(1-q^l))^{q}}&=\frac{\prod_{i=0}^{\infty}\mb{D}_{il+j}}{\prod_{i=0}^{\infty}\mb{D}^{q}_{il+j-1}}=\prod_{i=0}^{\infty}\frac{\mb{D}_{il+j}}{\mb{D}^{q}_{il+j-1}}=\prod_{i=0}^{\infty}\biggl(1-\frac{t}{\theta^{q^{il+j}}}\biggr)=(-\theta)^{\frac{q^j}{q^l-1}}\Omega_l^{(-l+j)}.
%\frac{\prod_{i=0}^{\infty}\prod_{n=0}^{il+j-1}(1-\frac{t^{}){}}{}
\end{align*}
\end{proof}

When $t=\theta$, Proposition \ref{prop:periodpart} and Proposition \ref{prop:quasiperiodpart} recover \cite[Theorem 1.6]{T91} and \cite[Theorem 3.3.2]{CPTY10} respectively.

By using Proposition \ref{prop:periodpart} and \ref{prop:quasiperiodpart}, we can write $G(q^j/(1-q^l))$ by $\Omega_l^{(-(l-i))}\ (1\leq i\leq l)$ for each $j$ in the following way.
\begin{eg}
By combining Proposition \ref{prop:quasiperiodpart} of $j=l-1$ and $l-2$ cases, we can get
\[
    G(q^{l-1}/(1-q^l))\sim G(q^{l-2}/(1-q^l))^q\Omega_l^{(-1)}\sim G(q^{l-3}/(1-q^l))^{q^2}(\Omega_l^{(-2)})^q\Omega_l^{(-1)}.
\]
Thus combining the cases of $j=1, 2, \ldots, l-1$, we get
\[
    G(q^{l-1}/(1-q^l))\sim G(1/(1-q^l))(\Omega_l^{(-l)})^{q^{l-2}} (\Omega_l^{(-(l-1)})^{q^{l-3}}\cdots\Omega_l^{(-1)}.
\]
By using Proposition \ref{prop:periodpart},
\[
    G(q^{l-1}/(1-q^l))\sim G(q^{l-1}/(1-q^l))^{q^l}(\Omega_l^{(-l)})^{q^{l-2}} (\Omega_l^{(-(l-1)})^{q^{l-3}}\cdots\Omega_l^{(-1)}.
\]
Thus we have
\[
 G(q^{l-1}/(1-q^l))\sim \Bigl((\Omega_l^{(-l)})^{q^{l-2}} (\Omega_l^{(-(l-1)})^{q^{l-3}}\cdots\Omega_l^{(-1)}\Bigr)^{\frac{1}{1-q^l}}.
\]
In the similar way, for $0\leq j\leq l-1$, we have
$$
    G(q^j/(q^l-1))\sim\Bigl(\Omega_l^{(-(l-j))}\cdots(\Omega_l^{(-(l-1))})^{q^{j-1}}\Omega_l^{q^j}(\Omega_l^{(-1)})^{q^{j+1}}\cdots(\Omega_l^{(-(l-j-1))})^{q^{l-1}}\Bigr)^{\frac{1}{1-q^l}}.
$$
\end{eg}

The above implies
\begin{align*}
&\trdeg_{\overline{k}(t)}\overline{k}(t)\Bigl(\Omega_l^{(-j)}\ |\ 0\leq j\leq l-1\Bigr)=\trdeg_{\overline{k}(t)}\overline{k}(t)\Bigl(G(q^j/(1-q^l))\ |\ 0\leq j\leq l-1\Bigr).
\end{align*}
Therefore we have
\begin{align}\label{hyptran:andersonthakur:gamma:t}
&\trdeg_{\overline{k}(t)}\overline{k}(t)\Bigl(\D^{(i)}_t(\Omega_l^{(-j)})\ |\ 0\leq i\leq n, 0\leq j\leq l-1\Bigr)\\
&=\trdeg_{\overline{k}(t)}\overline{k}(t)\Bigl(\D^{(i)}_t\bigl(G(q^j/(1-q^l))\bigr)\ |\ 0\leq i\leq n, 0\leq j\leq l-1\Bigr).\nonumber
\end{align}

Here, we recall the hyperderivative at $\theta$.
\begin{defn}
For each $n\geq 0$, we define $\mathbb{F}_q$-linear operator $\D_\theta^{n}$ on $k_{\infty}$ by
\begin{align*}
\D_{\theta}^{(n)}:k_{\infty}&\rightarrow k_{\infty}\\
                \sum_{i=m}^{\infty}a_i\theta^{-i}&\mapsto \sum_{i=m}^{\infty}\binom{-i}{n}a_i\theta^{-i-n}.
    \end{align*} 
\end{defn}
The generalized Leibnitz rule \eqref{Leibniz} also holds for $\D_{\theta}^{(n)}$. Further, $\D_{\theta}$ is also defined on $k_{\infty}^{sep}$ (cf. \cite[Example 1.5]{M22}).

By using $\D_{\theta}$, we set the homomorphism (cf. \cite[Definition 1.7]{M22})
\begin{equation}
    \rho_{n, \theta}:\operatorname{Mat}_{d\times d}\bigl(k_\infty^{sep} \bigr) \rightarrow \operatorname{Mat}_{d(n+1)\times d(n+1)}\bigl(k_\infty^{sep} \bigr)
\end{equation}
    defined by
\begin{equation}
    \rho_{n, \theta}(X):=
    \begin{pmatrix}
   X & 0 &\cdots&\cdots&0\\
   \partial^{(1)}_{\theta}X & X&0&\cdots&0\\
   \vdots& \vdots&\ddots&&\vdots \\
   \partial^{(n-1)}_{\theta}X & \partial^{(n-2)}_{\theta}X&\cdots&X&0\\
   \partial^{(n)}_{\theta}X &\partial^{(n-1)}_{\theta}X &\cdots&\partial^{(1)}_{\theta}X &X
\end{pmatrix}.
\end{equation}

In the following lemma, we set $k[t]_{(t-\theta)}$ the localization of $k[t]$ at the prime ideal $(t-\theta)$, that is, rational functions regular at $t=\theta$.

\begin{lem}\label{lem;derivegamma}
For each $j\geq 0$, there is $b_j\in k[t]_{(t-\theta)}$ so that 
\[
    \D_{\theta}^{(j)}\biggl( G\Bigl(\frac{q^i}{1-q^l}\Bigr) \biggr)=b_j G\Bigl( \frac{q^i}{1-q^l} \Bigr).
\]
\end{lem}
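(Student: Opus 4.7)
The plan is to realize $F:=G(q^i/(1-q^l))$ as an infinite product of simple factors and to reduce the computation of $\partial_\theta^{(j)}F/F$ to a finite subproduct using the characteristic-$p$ identity $(\theta+\epsilon)^{q^m}=\theta^{q^m}+\epsilon^{q^m}$. First I would read off the base-$q$ expansion $q^i/(1-q^l)=q^i+q^{i+l}+q^{i+2l}+\cdots$ to get $F=\prod_{n\geq 0}\mathbb{D}_{i+nl}$; then, using $(1-t^{q^j}/\theta^{q^a})=(1-t/\theta^{q^{a-j}})^{q^j}$, rewrite $\mathbb{D}_a=\prod_{c=1}^{a}(1-t/\theta^{q^c})^{q^{a-c}}$. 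Hence $F$ is an infinite product of elementary factors $F_{(c,e)}:=(1-t/\theta^{q^c})^{q^e}$ with $c\geq 1$ and $e\geq 0$, those contributing to $\mathbb{D}_{i+nl}$ being the ones with $c+e=i+nl$.

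Next I would encode hyperderivatives via the formal Taylor substitution $\theta\mapsto\theta+\epsilon$: for $R\in k_\infty$ one has $R(\theta+\epsilon)=\sum_{j\geq 0}\partial_\theta^{(j)}(R)\,\epsilon^j$. A short direct computation using $(\theta+\epsilon)^{q^{c+e}}=\theta^{q^{c+e}}+\epsilon^{q^{c+e}}$ yields
\begin{equation}
\frac{F_{(c,e)}(\theta+\epsilon)}{F_{(c,e)}(\theta)}=1+\frac{t^{q^e}\epsilon^{q^{c+e}}}{(\theta^{q^c}-t)^{q^e}(\theta^{q^{c+e}}+\epsilon^{q^{c+e}})}.
\end{equation}
Two features of this expression drive the proof: as a rational function in $t$, the only pole occurs at $t=\theta^{q^c}$, which differs from $t=\theta$ since $c\geq 1$; and the $\epsilon$-adic valuation of the right-hand side minus $1$ equals $q^{c+e}$.

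Given $j$, I would pick $N$ with $q^{i+(N+1)l}>j$ and split $F=F^{\leq N}F^{>N}$ where $F^{\leq N}:=\prod_{n\leq N}\mathbb{D}_{i+nl}$. Every elementary factor of $F^{>N}$ satisfies $c+e\geq i+(N+1)l$, so multiplying the formula above yields $F^{>N}(\theta+\epsilon)/F^{>N}(\theta)\equiv 1\pmod{\epsilon^{j+1}}$ as formal power series in $\epsilon$. Extracting coefficients gives $\partial_\theta^{(k)}F^{>N}=0$ for $1\leq k\leq j$, and the generalized Leibniz rule collapses to $\partial_\theta^{(j)}F/F=\partial_\theta^{(j)}F^{\leq N}/F^{\leq N}$. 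Expanding the right-hand side via Leibniz on the finite product writes it as a finite $\mathbb{F}_q(\theta)$-linear combination of products of terms $\partial_\theta^{(j')}F_{(c,e)}/F_{(c,e)}$, each of which is a rational function in $t$ over $k$ regular at $t=\theta$ by the pole analysis above. This rational function is the desired $b_j\in k[t]_{(t-\theta)}$.

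The principal technical care is in justifying the passage from the infinite product to the finite one; the order-$q^{c+e}$ vanishing of $F_{(c,e)}(\theta+\epsilon)/F_{(c,e)}(\theta)-1$ is what makes this work, and it is a genuinely characteristic-$p$ phenomenon, since in characteristic zero each factor would contribute already at order $\epsilon$ and no such cutoff would be possible.
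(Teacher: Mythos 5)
Your proof is correct and takes essentially the same route as the paper: split the infinite product at a cutoff $N$ with the relevant $q$-power exceeding $j$, note that the tail has vanishing hyperderivatives $\partial_{\theta}^{(j')}$ for $1\leq j'\leq j$ because every $\theta$-dependence there is a $q^{c+e}$-th power with $q^{c+e}>j$ (your $\epsilon$-expansion identity is a repackaging of the paper's use of $\partial_{\theta}^{(n)}(f^{q^h})=0$ for $0<n<q^h$ together with the generalized Leibniz rule), and take $b_j$ to be the resulting ratio built from the finite subproduct. Your explicit pole analysis at $t=\theta^{q^c}$, $c\geq 1$, confirming $b_j\in k[t]_{(t-\theta)}$, makes explicit a regularity point the paper leaves implicit.
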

\begin{proof}
 When $j=0$, clearly the claim holds by taking $b_0=1$.
 
 We give a proof for $j>0$ case. 
 By generalized Leibniz rule, we have
 \begin{align*}
    &\D_{\theta}^{(j)}\biggl( G\biggl(\frac{q^i}{1-q^l}\biggr) \biggr)=\D_{\theta}^{(j)}\biggl\{ \prod_{i\geq 1}\prod_{n\geq 0}^{il-1}\biggl(  1-\frac{t^{q^n}}{\theta^{q^{il}}}  \biggr) \biggr\} \\
    &=\D_{\theta}^{(j)}\biggl\{ 
    \prod_{N\geq i\geq 1}\biggl(\prod_{n\geq 0}^{il-1}\Bigl(  1-\frac{t^{q^n}}{\theta^{q^{il}}}  \Bigr)\biggr)
    \prod_{i\geq N}\biggl(\prod_{n\geq 0}^{il-1}\Bigl(  1-\frac{t^{q^n}}{\theta^{q^{il}}}  \Bigr) \biggr)\biggr\}\\
    &=\sum_{j^\prime=0}^j\D_{\theta}^{(j-j^\prime)}\biggl\{ 
    \prod_{N> i\geq 1}\biggl(\prod_{n\geq 0}^{il-1}\Bigl(  1-\frac{t^{q^n}}{\theta^{q^{il}}}  \Bigr)\biggr)\biggr\}
    \D_{\theta}^{(j^\prime)}\biggl\{\prod_{i> N}\biggl(\prod_{n\geq 0}^{il-1}\Bigl(  1-\frac{t^{q^n}}{\theta^{q^{il}}}  \Bigr) \biggr)\biggr\}
 \end{align*}
for any $N \geq 1$.
Recall here that for given $h\in\mathbb{N}$,
  we have $\D_{\theta}^{(n)}(f^{q^h})=0$ for all $f\in k_{\infty}^{sep}$ and all $n$ such that $q^h>n>0$ (cf. \cite[Proof of Lemma 2.2]{M22}).
 Hence, if $N$ is taken so that $q^{Nl}>j$, we have $\D_{\theta}^{(j^\prime)}(\theta^{-q^{il}})=0$ for $i>N$ and $0<j^\prime  \leq j$, thus it holds by generalized Leibniz rule that
\begin{equation}
     \D_{\theta}^{(j^\prime)}\biggl\{\prod_{i> N}\biggl(\prod_{n\geq 0}^{il-1}\Bigl(  1-\frac{t^{q^n}}{\theta^{q^{il}}}  \Bigr) \biggr)\biggr\}=0
 \end{equation}
for $0<j^\prime  \leq j$.
Consequently, we have
 \begin{align*}
    \D_{\theta}^{(j)}\biggl( G\biggl(\frac{q^i}{q^l-1}\biggr) \biggr)
    &=\D_{\theta}^{(j)}\biggl\{ 
    \prod_{N> i\geq 1}\biggl(\prod_{n\geq 0}^{il-1}\Bigl(  1-\frac{t^{q^n}}{\theta^{q^{il}}}  \Bigr)\biggr)\biggr\}
    \biggr\{\prod_{i> N}\biggl(\prod_{n\geq 0}^{il-1}\Bigl(  1-\frac{t^{q^n}}{\theta^{q^{il}}}  \Bigr) \biggr)\biggr\}\\
    &=\D_{\theta}^{(j)}\biggl\{ 
    \prod_{N\geq i\geq 1}\biggl(\prod_{n\geq 0}^{il-1}\Bigl(  1-\frac{t^{q^n}}{\theta^{q^{il}}}  \Bigr)\biggr)\biggr\}
    \biggl\{ \prod_{N\geq i\geq 1}\biggl(\prod_{n\geq 0}^{il-1}\Bigl(  1-\frac{t^{q^n}}{\theta^{q^{il}}}  \Bigr)\biggr) \biggr\}^{-1}G\Bigl( \frac{q^i}{1-q^l} \Bigr)
 \end{align*} 
and thus we can take $b_j$ as
$$
    b_j=\D_{\theta}^{(j)}\biggl\{ 
    \prod_{N\geq i\geq 1}\biggl(\prod_{n\geq 0}^{il-1}\Bigl(  1-\frac{t^{q^n}}{\theta^{q^{il}}}  \Bigr)\biggr)\biggr\}
    \biggl\{ \prod_{N\geq i\geq 1}\biggl(\prod_{n\geq 0}^{il-1}\Bigl(  1-\frac{t^{q^n}}{\theta^{q^{il}}}  \Bigr)\biggr) \biggr\}^{-1}.
$$
%\[
%    $\D_{\theta}^{(j)}(\theta^{-q^{il}})=0$ 
%\]
\end{proof}

\begin{lem}\label{lem;derivespan}
    For all $n\geq 0$,  we have
\begin{align}\label{claim;derivespan}
    &\D_{\theta}^{(n)}\biggl( \Gamma\biggl( \frac{q^h}{q^l-1} \biggr) \biggr)
    +\D_{t}^{(n)}\biggl( G\biggl( \frac{q^h}{1-q^l} \biggr)\biggr)|_{t=\theta}\\
    &\quad \in\Span_{k}\biggl\{ \D_t^{(j)}\biggl( G\Bigl(\frac{q^h}{1-q^l}\Bigr) \biggr)|_{t=\theta}\ |\ 0\leq j\leq n-1  \biggr\}.
\end{align}
%\textcolor{blue}{[$0 \leq j \leq n-1$ is more appropriate?]}
\end{lem}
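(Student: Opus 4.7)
The plan is to reduce the claim to Lemma \ref{lem;derivegamma} by interpreting $\Gamma$ and $G$ as specializations of a common two-variable function and applying a diagonal chain rule for hyperderivatives. Let $s=q^h/(1-q^l)$ and introduce
\begin{equation}
\tilde{G}(u,v) := \prod_{i\geq 1}\prod_{j=0}^{il-1}\left(1-\frac{u^{q^j}}{v^{q^{il}}}\right),
\end{equation}
so that $\tilde{G}(t,\theta)=G(s)\in\mathbb{T}$ and $\tilde{G}(\theta,\theta)=G(s)|_{t=\theta}\in k_{\infty}$. The digit expansion of $q^h/(q^l-1)$ appearing in $\Gamma$ differs from that of $q^h/(1-q^l)$, but an analogous two-variable object can be written down in that case and the argument below is formally identical, with the sign between the two summands on the left-hand side accounted for by the sign relating the two expansions.

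The key intermediate identity is the diagonal chain rule
\begin{equation}
\partial_\theta^{(n)}\tilde{G}(\theta,\theta) = \sum_{a+b=n}\bigl(\partial_u^{(a)}\partial_v^{(b)}\tilde{G}\bigr)(\theta,\theta).
\end{equation}
This follows from Vandermonde's identity $\sum_{a+b=n}\binom{m}{a}\binom{m'}{b}=\binom{m+m'}{n}$ applied termwise to a monomial expansion of $\tilde{G}$. To make this rigorous for the infinite product, I would truncate as in the proof of Lemma \ref{lem;derivegamma}: for $N$ with $q^{Nl}>n$, write $\tilde{G}=\tilde{G}_N\cdot\tilde{G}_{\infty}$ where $\tilde{G}_N$ is the finite subproduct over $i<N$; the tail $\tilde{G}_\infty$ has $\partial_v^{(b)}\tilde{G}_\infty=0$ for $0<b\leq n$, which together with the multivariable generalized Leibniz rule reduces the identity to the analogous statement for $\tilde{G}_N$, a genuine finite product in $\overline{k}[u,v]$ where Vandermonde applies directly.

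Isolating the $(a,b)=(n,0)$ term gives $\partial_t^{(n)}G(s)|_{t=\theta}$, the leading piece. For $b\geq 1$, I would use that $\bigl(\partial_u^{(a)}\partial_v^{(b)}\tilde{G}\bigr)(\theta,\theta) = \partial_t^{(a)}[\partial_\theta^{(b)}G(s)]|_{t=\theta}$, and invoke Lemma \ref{lem;derivegamma} to write $\partial_\theta^{(b)}G(s) = b_b\cdot G(s)$ with $b_b\in k[t]_{(t-\theta)}$. The generalized Leibniz rule \eqref{Leibniz} in $t$ yields
\begin{equation}
\partial_t^{(a)}\bigl[b_b\cdot G(s)\bigr]\bigr|_{t=\theta} = \sum_{i+j=a}\bigl(\partial_t^{(i)}b_b\bigr)|_{t=\theta}\cdot\partial_t^{(j)}G(s)|_{t=\theta}.
\end{equation}
Since $b_b$ is rational in $t$ over $k$ and regular at $t=\theta$, each $(\partial_t^{(i)}b_b)|_{t=\theta}$ lies in $k$; and $b\geq 1$ forces $j\leq a\leq n-1$, so every such term belongs to $\Span_k\{\partial_t^{(j)}G(s)|_{t=\theta}:0\leq j\leq n-1\}$. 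Summing over the pairs $(a,b)$ with $a+b=n$ and $b\geq 1$ produces the required element of the span, and combining with the isolated leading term yields the membership claimed by the lemma.

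The main obstacle is the diagonal chain rule together with the sign reconciliation between $q^h/(q^l-1)$ and $q^h/(1-q^l)$. The former is routine modulo the truncation bookkeeping inherited from Lemma \ref{lem;derivegamma}; the latter requires careful tracking of which subproduct of factors $(1-t^{q^j}/\theta^{q^i})$ enters $\Gamma$ versus $G$, but does not affect the structural content of the argument since all the resulting terms continue to be rational multiples of $G(q^h/(1-q^l))$ (by Lemma \ref{lem;derivegamma}) with coefficients in $k[t]_{(t-\theta)}$.
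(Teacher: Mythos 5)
Your overall strategy is the same as the paper's: the paper obtains the identity $\D_{\theta}^{(n)}\bigl(G(s)|_{t=\theta}\bigr)=\sum_{i+j=n}\bigl(\D_t^{(i)}\D_{\theta}^{(j)}G(s)\bigr)|_{t=\theta}$ by citing \cite[Lemma 1.6]{M22} (this is precisely your diagonal chain rule, with $u\leftrightarrow t$ and $v\leftrightarrow\theta$), then substitutes $\D_{\theta}^{(j)}G(s)=b_jG(s)$ from Lemma \ref{lem;derivegamma} and applies the Leibniz rule \eqref{Leibniz} in $t$, exactly as in the second half of your argument; your treatment of the terms with $b\geq 1$ and of the coefficients $(\D_t^{(i)}b_j)|_{t=\theta}\in k$ matches the paper. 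The one place where you do something different is that you prove the chain rule by hand, and there your truncation step has a gap: writing $\tilde G=\tilde G_N\tilde G_\infty$ and using $\partial_v^{(b)}\tilde G_\infty=0$ for $0<b\leq n$ only controls how derivatives distribute on the two-variable side; it does not reduce the left-hand side to the finite subproduct, because $\D_\theta^{(d)}$ of the diagonal specialization of the tail does not vanish. For instance the tail factors with $j=0$ specialize to $1-\theta^{1-q^{il}}$, whose first hyperderivative is $-\theta^{-q^{il}}\neq 0$. What you actually need is the chain rule for the (infinite) tail itself, $\D_{\theta}^{(d)}\bigl(\tilde G_\infty(\theta,\theta)\bigr)=\bigl(\partial_u^{(d)}\tilde G_\infty\bigr)(\theta,\theta)$ for $d\leq n$; this is true (expand the tail as a series in $u$ whose coefficients are $q^{Nl}$-th powers in their $\theta$-dependence, differentiate termwise using continuity of $\D_\theta^{(d)}$ on $k_\infty$, and apply Vandermonde), but it is an infinite-series statement that does not follow from the finite-product case, so it must be argued separately --- or one simply invokes \cite[Lemma 1.6]{M22} as the paper does.

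Separately, your sign reconciliation is not valid as stated: there is no ``sign relating the two expansions'' of $q^h/(q^l-1)$ and $q^h/(1-q^l)$, since both $\Gamma\bigl(q^h/(1-q^l)\bigr)$ and $\Gamma\bigl(q^h/(q^l-1)\bigr)$ lie in $1+\mathfrak m$ (each factor is $1$ plus something of norm $<1$), so they cannot differ by $-1$ when $p$ is odd. The mismatch of arguments and the plus sign in the statement are artifacts of the statement itself (the paper's own proof takes both arguments to be $q^h/(1-q^l)$, where $G(s)|_{t=\theta}=\Gamma(s)$ holds on the nose); what your computation genuinely proves is that $\D_{\theta}^{(n)}\Gamma\bigl(q^h/(1-q^l)\bigr)-\D_{t}^{(n)}G\bigl(q^h/(1-q^l)\bigr)|_{t=\theta}$ lies in the stated span, and since only the $k$-span matters in the application (Lemma \ref{Lem gamma space deform space}), that version is all that is needed. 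You should state it that way rather than appeal to a nonexistent sign relation.
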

\begin{proof}
    By using \cite[Lemma 1.6]{M22},
    \begin{align*}
        &-\D_{\theta}^{(n)}\biggl\{ \Gamma\biggl( \frac{q^h}{1-q^l} \biggr)\biggr\}=\sum_{i+j=n}\biggl( \D_{t}^{(i)}\D_{\theta}^{(j)}\Bigl\{ G\Bigl(\frac{q^h}{1-q^l}\Bigr) \Bigr\} \biggr)|_{t=\theta} \\
        \intertext{then, by Lemma \ref{lem;derivegamma},}
        &\quad =\sum_{i+j=n}\D_t^{i}\biggr( b_jG(\frac{q^h}{1-q^l}) \biggr)|_{t=\theta}\\
        &\quad =\biggl( \sum_{i+j=n}\sum_{i_1+i_2=i}\D_t^{(i_1)}(b_j)\D_t^{(i_2)}\biggl(G\Bigl(\frac{q^h}{1-q^l}\Bigr) \biggr) \biggr)|_{t=\theta}\\
        &\quad =\D_t^{(n)}\biggl( G\Bigl(\frac{q^h}{1-q^l}\Bigr) \biggr)|_{t=\theta}+\sum_{i+j=n}\sum_{\substack{i_1+i_2=i\\i_2<n}}\D_t^{(i_1)}(b_j)|_{t=\theta}\D_t^{(i_2)}\biggl( G\Bigl(\frac{q^h}{1-q^l}\Bigr) \biggr)|_{t=\theta}.
    \end{align*}
   Therefore we get \eqref{claim;derivespan}. 
\end{proof}

\begin{lem}\label{Lem gamma space deform space}
For all $n\geq 0$,
    \begin{align*}
    \Span_{\overline{k}}\biggl\{ \D_{\theta}^{(j)}\biggl(\Gamma\Bigl(\frac{q^h}{1-q^l}\Bigr)\biggr)\ |\ 0\leq j\leq n \biggr\}=\Span_{\overline{k}}\biggl\{ \D_{t}^{(j)}\biggl(G\Bigl(\frac{q^h}{1-q^l}\Bigr)\biggr)|_{t=\theta}\ |\ 0\leq j\leq n \biggr\}.
    \end{align*}
\end{lem}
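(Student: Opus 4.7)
The plan is to prove the equality by induction on $n$. Write $\Gamma_{h} := \Gamma(q^h/(1-q^l))$ and $G_{h} := G(q^h/(1-q^l))$, and set
\[
V_n := \Span_{\overline{k}}\bigl\{\partial_\theta^{(j)}\Gamma_{h} : 0 \leq j \leq n\bigr\},\qquad W_n := \Span_{\overline{k}}\bigl\{\partial_t^{(j)}G_{h}|_{t=\theta} : 0 \leq j \leq n\bigr\}.
\]
The goal is $V_n = W_n$ for all $n \geq 0$. The base case $n=0$ is immediate from the identity $G(s)|_{t=\theta} = \Gamma(s)$ recorded after the definition of the deformation series $G(s)$ in the introduction, which specializes to $G_{h}|_{t=\theta} = \Gamma_{h}$ and hence $V_0 = W_0 = \overline{k}\cdot \Gamma_{h}$.

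For the inductive step, assume $V_{n-1} = W_{n-1}$. I would invoke Lemma \ref{lem;derivespan}, which supplies the single relation
\[
\partial_\theta^{(n)}\Gamma_{h} + \partial_t^{(n)}G_{h}|_{t=\theta} \in \Span_{k}\bigl\{\partial_t^{(j)}G_{h}|_{t=\theta} : 0 \leq j \leq n-1\bigr\} \subseteq W_{n-1},
\]
and read it in two directions. Solving for $\partial_\theta^{(n)}\Gamma_{h}$ places it in $W_n$, whence $V_n = V_{n-1} + \overline{k}\cdot\partial_\theta^{(n)}\Gamma_{h} \subseteq W_{n-1} + W_n = W_n$ by the inductive hypothesis. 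Solving instead for $\partial_t^{(n)}G_{h}|_{t=\theta}$ expresses it as $-\partial_\theta^{(n)}\Gamma_{h}$ plus an element of $W_{n-1} = V_{n-1}$, which lies in $V_n$; combined with $W_{n-1} = V_{n-1} \subseteq V_n$ this yields $W_n \subseteq V_n$.

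There is no real obstacle here, since all of the analytic content—the precise relationship between $\partial_\theta^{(n)}$ applied to the value at $t=\theta$ and $\partial_t^{(n)}$ applied before specialization—has already been packaged into Lemma \ref{lem;derivespan}. What remains is the elementary upper-triangular change-of-basis observation that two filtered sequences which agree modulo lower-order terms span the same subspace at every filtration level, which the induction above makes precise.
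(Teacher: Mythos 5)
Your proposal is correct and follows essentially the same route as the paper: induction on $n$ with Lemma \ref{lem;derivespan} as the sole input, reading the relation $\partial_\theta^{(n)}\Gamma+\partial_t^{(n)}G|_{t=\theta}\in\Span\{\partial_t^{(j)}G|_{t=\theta}:j\le n-1\}$ in both directions to get the two inclusions (the paper phrases the inclusion $\supset$ as the inductive step and treats $\subset$ as immediate, but the content is identical). No gaps.
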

\begin{proof}
First we prove the following inclusion by the induction on $n$.
\begin{align}\label{proof;supset}
    \Span_{\overline{k}}\biggl\{ \D_{\theta}^{(j)}\biggl(\Gamma\Bigl(\frac{q^h}{1-q^l}\Bigr)\biggr)\ |\ &0\leq j\leq n \biggr\}\\
    &\supset \Span_{\overline{k}}\biggl\{ \D_{t}^{(j)}\biggl(G\Bigl(\frac{q^h}{1-q^l}\Bigr)\biggr)|_{t=\theta}\ |\ 0\leq j\leq n \biggr\}.\nonumber
    \end{align}
    
%We complete the proof by showing the other inclusion ``$\supset$''. 
%\begin{align*}
%    \Span_{k}\biggl\{ \D_{\theta}^{(j)}\biggl(\Gamma\Bigl(\frac{q^h}{q^l-1}\Bigr)\biggr)\ |\ n\geq j\geq 0 \biggr\} \supset \Span_{k}\biggl\{ \D_{t}^{(j)}\biggl(G\Bigl(\frac{q^h}{q^l-1}\Bigr)\biggr)|_{t=\theta}\ |\ n\geq j\geq 0 \biggr\}.
%    \end{align*}
%For the $n=1$ case, the claim follows from Lemma \ref{lem;derivespan}.

%For the $n>1$ case, 
We have the following by Lemma \ref{lem;derivespan}:
\begin{align*}
    &\D_{\theta}^{(n)}\biggl( \Gamma\biggl( \frac{q^h}{1-q^l} \biggr) \biggr)
    +\D_{t}^{(n)}\biggl( G\biggl( \frac{q^h}{1-q^l} \biggr)\biggr)|_{t=\theta}
    \in\Span_{\overline{k}}\biggl\{ \D_t^{(j)}\biggl( G\Bigl(\frac{q^h}{1-q^l}\Bigr) \biggr)|_{t=\theta}\ |\ 0\leq j\leq n-1  \biggr\}.
\end{align*}
By the induction hypothesis, 
\[
    \Span_{\overline{k}}\biggl\{ \D_{\theta}^{(j)}\biggl(\Gamma\Bigl(\frac{q^h}{1-q^l}\Bigr)\biggr)\ |\ 0\leq j\leq n-1 \biggr\}\supset \Span_{\overline{k}}\biggl\{ \D_t^{(j)}\biggl( G\Bigl(\frac{q^h}{1-q^l}\Bigr) \biggr)|_{t=\theta}\ |\ 0\leq j\leq n-1  \biggr\}
\]
and thus 
\begin{align*}
    &\D_{t}^{(n)}\biggl( G\biggl( \frac{q^h}{1-q^l} \biggr)\biggr)|_{t=\theta}
    \in\Span_{\overline{k}}\biggl\{ \D_t^{(j)}\biggl( G\Bigl(\frac{q^h}{1-q^l}\Bigr) \biggr)|_{t=\theta}\ |\ 0\leq j\leq n-1  \biggr\}.
\end{align*}
This shows the inclusion \eqref{proof;supset}. 

The following inclusion is given by Lemma \ref{lem;derivespan}:    
\begin{align}\label{proof;subset}
    &\Span_{\overline{k}}\biggl\{ \D_{\theta}^{(j)}\biggl(\Gamma\Bigl(\frac{q^h}{1-q^l}\Bigr)\biggr)\ |\ 0\leq j\leq n \biggr\}\\
    &\subset \Span_{\overline{k}}\biggl\{ \D_{t}^{(j)}\biggl(G\Bigl(\frac{q^h}{1-q^l}\Bigr)\biggr)|_{t=\theta}\ |\ 0\leq j\leq n \biggr\}.
    \end{align}

Therefore we obtain the desired equality by combining \eqref{proof;supset} and \eqref{proof;subset}.
\end{proof}
The above Lemma \ref{Lem gamma space deform space} implies that
\begin{align}\label{hyptran:andersonthakur:gamma}
&\overline{k}\biggl(\D_t^{(i)}\Bigl(G\Bigl(\frac{q^j}{1-q^l}\Bigr)\Bigr)|_{t=\theta}\ |\ 0\leq i\leq n, 0\leq j\leq l-1  \biggr)\\
&=\overline{k}\biggl(\D_{\theta}^{(i)}\Bigl(\Gamma\Bigl(\frac{q^j}{1-q^l}\Bigr)\Bigr)\ |\ 0\leq i\leq n, 0\leq j\leq l-1  \biggr).\nonumber
\end{align}

We recall that the Drinfeld module and its associated pre-$t$-motive given in \cite{CPTY10}.

%{\color{blue} definition of $\tau$}
\begin{defn}
Let $k\subset K\subset \mathbb{C}_{\infty}$ and $r\geq 1$. For $\alpha_1, \ldots, \alpha_r\in K$ with $\alpha_r\neq 0$, the Drinfeld $\mathbb{F}_q[t]$-module of rank $r$ 
%is the tuple $(\mathbb{G}_a, \rho)$ where $\rho$ 
is the $\mathbb{F}_q$-linear ring homomorphism $\rho$ given by
\begin{align*}
	\rho:\mathbb{F}_q[t]&\rightarrow K[\tau] \\
	           t &\mapsto \theta +\alpha_1\tau+\cdots +\alpha_r\tau^r.   
               % t &\mapsto \theta+\alpha_1\tau+\cdots+\alpha_r\tau^r=:\rho_t.
\end{align*}

%Here we note that $K[\tau]$ is subject to the relation $\tau a=a^q\tau$ for $a\in K$.
\end{defn}

%The Drinfeld module is considered as a positive characteristic analogue of elliptic curve.
%Indeed, we can define the complex multiplication of $(\mathbb{G}_a, \rho)$ as  {\color{red} def of CM and CM field is added in a few days.}
We define a field $K_{\rho}$ by the fraction field of $\End(\rho)=\{ a\in\mathbb{C}_{\infty}\ |\ a\Lambda_{\rho}\subseteq \Lambda_{\rho} \}$.

Then Namoijam's result is stated as follows:
\begin{thm}{\cite[Theorem 1.1.3]{N21}}\label{Namoijamformula}
Let $\rho$ be a Drinfeld $\mathbb{F}_q[t]$-module of rank $r$ defined over $k^{sep}$ and suppose that $K_{\rho}$ is separable over $k$.
Let $P_{\rho}$ be a period matrix of $\rho$, which accounts all periods and quasi-periods.
If $s=[K_{\rho}:k]$, then for $n\geq 1$, 
\[
\trdeg_{\ol{k}}\ol{k}\bigl( \D_{\theta}^{i}(P_{\rho})\ |\ 0\leq i\leq n \bigr)=(n+1)\cdot r^2/s.
\]
\end{thm}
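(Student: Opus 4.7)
The plan is to apply Maurischat's prolongation technique together with Papanikolas' criterion (Theorem \ref{RefinedPapanikolasThm}) to the pre-$t$-motive $M_\rho$ classically associated to the Drinfeld module $\rho$ by Anderson's theory. This pre-$t$-motive is defined by a matrix $\Phi_\rho \in \GL_r(\overline{k}(t))$ and admits a rigid analytic trivialization $\Psi_\rho \in \GL_r(\mathbb{E})$, whose entries can be described via the Anderson generating functions of $\rho$. The period matrix $P_\rho$ is, up to $\overline{k}^\times$-multiples, realized as the specialization $\Psi_\rho^{-1}|_{t=\theta}$.

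First I would reduce the statement about $\partial_\theta$-hyperderivatives to one about $\partial_t$-hyperderivatives evaluated at $t=\theta$, by proving the equality of fields
\[
\overline{k}\bigl(\D_{\theta}^{(i)}(P_\rho) \mid 0 \leq i \leq n\bigr)=\overline{k}\bigl(\D_{t}^{(i)}\Psi_\rho^{-1}|_{t=\theta} \mid 0 \leq i \leq n\bigr).
\]
The argument should parallel Lemmas \ref{lem;derivegamma}, \ref{lem;derivespan} and \ref{Lem gamma space deform space}: applying the generalized Leibniz rule to the series expansion of $\Psi_\rho$ around $t=\theta$, combined with Proposition \ref{Uchino}, expresses each $\D_\theta^{(i)}(P_\rho)$ in terms of $\D_t^{(j)}\Psi_\rho^{-1}|_{t=\theta}$ for $0\leq j\leq i$ with $\overline{k}[t]_{(t-\theta)}$-coefficients, and vice versa.

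Next, by the discussion of Subsection \ref{subsectionprolongation}, the matrix $\rho_n\Psi_\rho$ is a rigid analytic trivialization of $\rho_n M_\rho$, and its specialization at $t=\theta$ assembles exactly the values $\D_t^{(i)}\Psi_\rho|_{t=\theta}$ for $0\leq i\leq n$. Theorem \ref{RefinedPapanikolasThm} then identifies
\[
\trdeg_{\overline{k}}\overline{k}\bigl(\D_\theta^{(i)}(P_\rho) \mid 0\le i\le n\bigr)=\dim \Gamma_{\rho_n M_\rho}.
\]
Papanikolas' theorem for Drinfeld modules gives $\dim \Gamma_{M_\rho}=r^2/s$ under the hypothesis that $K_\rho/k$ is separable of degree $s$. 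The surjection $\Gamma_{\rho_n M_\rho}\twoheadrightarrow \Gamma_{M_\rho}$ induced by the projection $\rho_n M_\rho \twoheadrightarrow M_\rho$ (see Lemma \ref{LemmaTannakianProj}) together with the block lower-triangular shape of $\rho_n\Phi_\rho$ yields the upper bound $\dim \Gamma_{\rho_n M_\rho}\le (n+1)r^2/s$, since each successive ``diagonal block'' of the kernel embeds into a copy of $\operatorname{Lie}\Gamma_{M_\rho}$ viewed as an additive group.

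The main obstacle is the matching lower bound, i.e.\ showing that the unipotent kernel of $\Gamma_{\rho_n M_\rho}\twoheadrightarrow \Gamma_{M_\rho}$ has dimension exactly $n\cdot r^2/s$. Concretely, one must rule out any unexpected $\overline{k}(t)$-linear relation among the entries of $\rho_n\Psi_\rho$ that is not already present among the entries of $\Psi_\rho$. I would attack this by using Maurischat's description of the Lie algebra of the kernel as a quotient of $(\operatorname{Lie}\Gamma_{M_\rho})^{\oplus n}$, and then producing enough independent directions by varying the $K_\rho$-action in the CM case and by exploiting the separability of $K_\rho/k$ to ensure that the successive derivatives $\D_t^{(j)}\Psi_\rho$ remain generically independent of $\Psi_\rho, \ldots, \D_t^{(j-1)}\Psi_\rho$ modulo the relations defining $\Gamma_{M_\rho}$. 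Combining this lower bound with the reduction above yields the desired formula $(n+1)\cdot r^2/s$.
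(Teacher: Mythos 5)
First, note that the paper does not prove this statement at all: it is quoted verbatim from Namoijam's work \cite[Theorem 1.1.3]{N21} and used as an input (together with the CM computation $[K_{C_l}:k]=l$) to deduce Theorem \ref{ThmonGamma}. So there is no in-paper argument to compare against; your proposal has to stand on its own as a proof of Namoijam's theorem, and as such it has a genuine gap exactly where you flag "the main obstacle." The upper bound and the translation between $\D_\theta^{(i)}(P_\rho)$ and $\D_t^{(i)}\Psi_\rho^{-1}|_{t=\theta}$ are plausible (though even the latter is not a formal copy of Lemmas \ref{lem;derivegamma}--\ref{Lem gamma space deform space}, whose arguments exploit the special infinite-product shape of $G(q^h/(1-q^l))$; for a general Drinfeld module one needs the Anderson generating function machinery of \cite{NP22} to relate $\theta$-hyperderivatives of periods to $t$-hyperderivatives of the trivialization). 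But the lower bound $\dim\Gamma_{\rho_n M_\rho}\geq (n+1)r^2/s$ is the entire content of the theorem, and your plan for it — describe the kernel of $\Gamma_{\rho_n M_\rho}\twoheadrightarrow\Gamma_{M_\rho}$ as a quotient of $(\Lie\Gamma_{M_\rho})^{\oplus n}$ and "produce enough independent directions by varying the $K_\rho$-action in the CM case and by exploiting separability" — is a restatement of the goal, not an argument. Nothing in the proposal rules out an unexpected $\overline{k}(t)$-algebraic relation among the entries of $\rho_n\Psi_\rho$; establishing this requires the detailed analysis in \cite{N21} (and \cite{NP22}) of hyperderivatives of Anderson generating functions, of the weights of the $\Gamma_{M_\rho}$-action on the unipotent kernel, and of how the endomorphism algebra constrains it. Moreover, the theorem is stated for an arbitrary rank-$r$ Drinfeld module over $k^{sep}$ with $K_\rho/k$ separable, not only for CM modules, so an argument phrased around "the CM case" would not even cover the stated generality; separability enters in a precise way (e.g.\ through the action of $\D_\theta$ on $k^{sep}$ and the determination of the relevant centralizer), not merely as a license to vary endomorphisms.

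A secondary point: your claimed upper bound $\dim\Gamma_{\rho_n M_\rho}\leq(n+1)r^2/s$ also needs more than the block-triangular shape of $\rho_n\Phi_\rho$. The cleanest route is the one used implicitly in Section \ref{Hypertranscendence of arithmetic gamma values}: the $\overline{k}(t)$-algebraic relations among the entries of $\Psi_\rho$ coming from $\End(\rho)$ can be hyperdifferentiated (generalized Leibniz rule \eqref{Leibniz}) and specialized via Proposition \ref{Uchino}, which bounds the transcendence degree of the field generated by all $\D_t^{(i)}\Psi_\rho|_{t=\theta}$, $0\leq i\leq n$, by $(n+1)\cdot\trdeg_{\overline{k}}\overline{k}(\Psi_\rho|_{t=\theta})=(n+1)r^2/s$; asserting that "each diagonal block of the kernel embeds into a copy of $\Lie\Gamma_{M_\rho}$" is again something to be proved, not assumed. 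In short, the proposal is a reasonable outline of the strategy Namoijam actually follows, but the decisive dimension count for the prolonged Galois group is missing, so it does not constitute a proof.
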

Here $\D_{\theta}^{i}(P_{\rho})$ is formed by entry-wise action of $\D_{\theta}^{i}$ on $P_{\rho}$

\begin{defn}
    For a fixed $l\in\mathbb{Z}_{>0}$ and $k\subset K\subset \mathbb{C}_{\infty}$ with $\mathbb{F}_{q^l}\subset K$, the Carlitz $\mathbb{F}_{q^l}[t]$-module is the following $\mathbb{F}_{q^l}$-linear ring homomorphism:
    \begin{align*}
        C_l:\mathbb{F}_{q^l}[t]&\rightarrow K[\tau]\\
                            t&\mapsto \theta+\tau^l.
    \end{align*}
\end{defn}  

We note that 
$C_l$ is also regarded as rank $l$ Drinfeld module over $\mathbb{F}_q[t]$. In \cite{CPTY10}, they described its associated pre-$t$-motive $M_l$, whose $\sigma$-action is represented by the following $\Phi_l$:
\begin{align}\label{Phi}
\Phi_l:=
&\begin{array}{rccccccll}
\ldelim({5}{1pt}[] &0 & 1 & 0 & \cdots & 0 & \rdelim){5}{1pt}[] &\\
&0 & 0 & 1 & \cdots & 0 & & \\
&\vdots &    \vdots    & \ddots & \ddots & \vdots  &  & \in{\rm GL}_{l}(\overline{k}(t))\cap\Mat_l(\overline{k}[t]). \\
&0      & 0 & \cdots & 0 & 1 & &\\
&(t-\theta) & 0 & 0 & \cdots   & 0 &
\end{array}\\\nonumber
\end{align}
They also gave the rigid analytic trivialization of $\Phi_l$: 
let $\xi_l$ be a primitive element of $\mathbb{F}_{q^l}$ and let $\Psi_l$ be
\begin{align}\label{Psi}
\Psi_l:=
&\begin{array}{rccccccll}
\ldelim({5}{1pt}[] &\Omega_l & \xi_l\Omega_l & \cdots & \xi_l^{l-1}\Omega_l   & \rdelim){5}{1pt}[] &\\
&\Omega_l^{(-1)} & (\xi_l\Omega_l)^{(-1)} & \cdots & (\xi_l^{l-1}\Omega_l)^{(-1)}  & & \\
&\vdots          &    \vdots              & \ddots & \vdots   &  & \in{\rm GL}_{l}(\mathbb{L})\cap\Mat_l(\mathbb{T}). \\
&\Omega_l^{(-(l-1))}      & (\xi_l\Omega_l)^{(-(l-1))} & \cdots & (\xi_l^{l-1}\Omega_l)^{(-(l-1))}  & &
\end{array}\\\nonumber
\end{align}

We note that ${\Psi_l}^{-1}|_{t=\theta}$ is a period matrix of Drinfeld $\mathbb{F}_q[t]$-module $C_l$ (cf. \cite[Theorem 3.4.7]{NP22}).

Because $C_l$ has complex multiplication by $\mb{F}_{q^l}[\theta]$ (cf. \cite[pp.378--379]{G96}) , we have $K_{C_l}=\mb{F}_{q^l}(\theta)$ and $[K_{C_l}:k]=l$. Thus by Theorem \ref{Namoijamformula}, we obtain the following:
$$
\trdeg_{\ol{k}}\ol{k}\bigl( \rho_{n, \theta}({\Psi_l|_{t=\theta}}^{-1})\ |\ 0\leq i\leq n \bigr)=l(n+1).
$$
On the otherhand, $\ol{k}\bigl( \rho_{n, \theta}({\Psi_l|_{t=\theta}}^{-1}) \bigr)=\ol{k}\bigl( \{\rho_{n, \theta}({\Psi_l|_{t=\theta}})\}^{-1} \bigr)=\ol{k}\bigl( \rho_{n, \theta}({\Psi_l|_{t=\theta}} ) \bigr)$. Here the first equality holds since $\rho_{n,\,\theta}$ is a ring homomorphism.
%Here $\D_{\theta}^{i}(\Psi_l|_{t=\theta})$ is formed by entry-wise action of $\D_{\theta}^{i}$ on $\Psi_l|_{t=\theta}$.
Thus we have 
\begin{thm}\label{ThmonGamma}For $n \geq 0$ and $l \geq 1$,  
    \begin{align*}
    &\trdeg_{\ol{k}}\ol{k}\bigl( \D_{\theta}^{(i)}\bigl(\Gamma(q^j/(1-q^l))\bigr)\ |\ 0\leq i\leq n, 0\leq j\leq l-1 \bigr)\\
&=\trdeg_{\ol{k}}\ol{k}\bigl( \D_{\theta}^{(i)}(\Omega_l^{-(l-j)}|_{t=\theta})\ |\ 0\leq i\leq n, 0\leq j\leq l-1 \bigr)=l(n+1).
\end{align*}

\end{thm}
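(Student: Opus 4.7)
The plan is to obtain the equality $\trdeg_{\overline{k}}F_\Omega = l(n+1)$ via Namoijam's formula applied to the Carlitz $\mathbb{F}_{q^l}[t]$-module $C_l$, and then derive the first equality by showing that the $\Gamma$-field $F_\Gamma$ and the $\Omega_l$-field $F_\Omega$ have the same transcendence degree over $\overline{k}$, using the algebraic relations from Propositions \ref{prop:periodpart} and \ref{prop:quasiperiodpart}. Here
\begin{align*}
F_\Gamma &:= \overline{k}\bigl(\D_\theta^{(i)}\Gamma(q^j/(1-q^l)) : 0\leq i\leq n,\, 0\leq j\leq l-1\bigr),\\
F_\Omega &:= \overline{k}\bigl(\D_\theta^{(i)}(\Omega_l^{(-m)}|_{t=\theta}) : 0\leq i\leq n,\, 0\leq m\leq l-1\bigr).
\end{align*}

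First I would regard $C_l$ as a Drinfeld $\mathbb{F}_q[t]$-module of rank $l$. It has complex multiplication by $\mathbb{F}_{q^l}[\theta]$, so $K_{C_l}=\mathbb{F}_{q^l}(\theta)$ and $s:=[K_{C_l}:k]=l$. Theorem \ref{Namoijamformula} with $r=l$ then yields $\trdeg_{\overline{k}}\overline{k}(\D_\theta^{(i)}P_{C_l} : 0 \leq i \leq n)=(n+1)l^2/l=l(n+1)$, where $P_{C_l}=\Psi_l^{-1}|_{t=\theta}$ is the period matrix. Since $\rho_{n,\theta}$ is an $\mathbb{F}_q$-algebra homomorphism, it commutes with matrix inversion, so this field coincides with $\overline{k}(\rho_{n,\theta}(\Psi_l|_{t=\theta}))$. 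The entries of the latter are $\xi_l^{m}\D_\theta^{(i)}(\Omega_l^{(-j)}|_{t=\theta})$ with $\xi_l\in\overline{k}$; hence this field is exactly $F_\Omega$, which gives the second equality in the statement.

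For the first equality, I would show $\trdeg_{\overline{k}}F_\Gamma=\trdeg_{\overline{k}}F_\Omega$ by exhibiting $F_\Omega\subseteq F_\Gamma$ and $F_\Gamma$ algebraic over $F_\Omega$. Evaluating Propositions \ref{prop:periodpart} and \ref{prop:quasiperiodpart} at $t=\theta$ expresses each $\Omega_l^{(-m)}|_{t=\theta}$ as a ratio $\Gamma(q^j/(1-q^l))/\Gamma(q^{j-1}/(1-q^l))^q$ up to an element of $\overline{k}^{\times}$; applying $\D_\theta^{(i)}$ via the generalized Leibniz rule \eqref{Leibniz} yields $F_\Omega\subseteq F_\Gamma$. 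Conversely, the example following Proposition \ref{prop:quasiperiodpart} shows that each $\Gamma(q^j/(1-q^l))$ satisfies a relation of the form $\Gamma^{q^l-1}=c\cdot\prod_m (\Omega_l^{(-m)}|_{t=\theta})^{e_m}$ for some $c\in\overline{k}^{\times}$ and nonnegative integers $e_m$. Applying $\D_\theta^{(i)}$ to this relation, expanding by generalized Leibniz, and using that $q^l-1\equiv -1\pmod p$ to isolate the leading term, one inductively recovers $\D_\theta^{(i)}\Gamma$ as an element of $F_\Omega(\Gamma,\D_\theta^{(j)}\Gamma : j<i)$, which lies in an algebraic extension of $F_\Omega$. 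Together with $F_\Omega\subseteq F_\Gamma$, this gives the claimed equality of transcendence degrees.

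The main obstacle is this inductive extraction: in the generalized Leibniz expansion of $\D_\theta^{(i)}(\Gamma\cdots\Gamma)$ with $q^l-1$ factors, one must verify that the coefficient of $\Gamma^{q^l-2}\D_\theta^{(i)}\Gamma$ equals $q^l-1$ (nonzero in $\mathbb{F}_p$), and then cleanly place the remaining summands within $F_\Omega$ adjoined with $\D_\theta^{(j)}\Gamma$ for $j<i$, handled by the inductive hypothesis. An alternative, and perhaps cleaner, route is to use equation \eqref{hyptran:andersonthakur:gamma} (via Lemma \ref{Lem gamma space deform space}) to rewrite $F_\Gamma$ as $\overline{k}(\D_t^{(i)}G(q^j/(1-q^l))|_{t=\theta})$, combine this with \eqref{hyptran:andersonthakur:gamma:t} to match the deformation side, and finally prove an analog of Lemma \ref{Lem gamma space deform space} for $\Omega_l^{(-m)}$ in place of $G$, which would bridge $\D_t^{(i)}(\Omega_l^{(-m)})|_{t=\theta}$ with $\D_\theta^{(i)}(\Omega_l^{(-m)}|_{t=\theta})$.
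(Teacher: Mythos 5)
Your proposal is correct and follows essentially the same route as the paper: Namoijam's formula applied to the CM Drinfeld module $C_l$ (with $K_{C_l}=\mathbb{F}_{q^l}(\theta)$, $s=l$) gives $l(n+1)$ for the field generated by the hyperderivatives of the entries of $\Psi_l^{-1}|_{t=\theta}$, the ring-homomorphism property of $\rho_{n,\theta}$ identifies this with the $\Omega_l$-field, and the specializations of Propositions \ref{prop:periodpart}--\ref{prop:quasiperiodpart} at $t=\theta$ bridge to the gamma side. Your inductive algebraicity argument for the converse inclusion (via the coefficient $q^l-1\equiv -1 \pmod p$) works, though it can be shortened: since $F_\Gamma$ is generated over $\overline{k}$ by only $l(n+1)$ elements, $\trdeg_{\overline{k}}F_\Gamma\leq l(n+1)$ already follows, and the inclusion $F_\Omega\subseteq F_\Gamma$ then forces equality.
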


By using \eqref{hyptran:andersonthakur:gamma}, we also obtain

\begin{equation}\label{hyptran:andersonthakur:theta}
\trdeg_{\ol{k}}\overline{k}\biggl(\D_t^{(i)}\Bigl(G\Bigl(\frac{q^j}{1-q^l}\Bigr)\Bigr)|_{t=\theta}\ |\ 0\leq i\leq n, 0\leq j\leq l-1  \biggr)=l(n+1)
\end{equation}

By applying Theorem \ref{RefinedPapanikolasThm} (ii) to the above and by \eqref{hyptran:andersonthakur:gamma:t}, we further get 
\begin{align}\label{hyptran:omega:and:G}
&\trdeg_{\overline{k}(t)}\overline{k}(t)\Bigl(\D^{i}_t(\Omega_l^{(-j)})\ |\ 0\leq i\leq n, 0\leq j\leq l-1\Bigr)\\
&=\trdeg_{\overline{k}(t)}\overline{k}(t)\Bigl(\D^{i}_t\bigl(G(q^j/(1-q^l))\bigr)\ |\ 0\leq i\leq n, 0\leq j\leq l-1\Bigr)=l(n+1).
\end{align}

\section{$t$-motivic Galois group of pre-$t$-motives associated to arithmetic gamma  values and their prolongations}

    In this section, we consider the prolongations of the pre-$t$-motives $C \oplus M_l$ discussed in \cite{CPTY10}, which have Carlitz period $\tilde{\pi}$ and arithmetic gamma values as their periods. By Theorems \ref{RefinedPapanikolasThm} and \ref{ThmonGamma}, we calculate the dimension of $t$-motivic Galois groups of prolongations of aforementioned pre-$t$-motives.
    
    We fix a non-negative integer $n$ and positive integer $l$.
       Let $N$ be the pre-$t$-motive defined by 
       \begin{equation}
           (t-\theta)\oplus \underbrace{\begin{pmatrix}
               0&1&0&\cdots&0&0\\
               0&0&1&0&\cdots&0\\
               \vdots&\vdots&\ddots&\ddots&\ddots&\vdots\\
               0&0&\cdots&\ddots&1&0\\
               0&0&\cdots&\cdots&0&1\\
               (t-\theta)&0&\cdots&\cdots&0&0\\
           \end{pmatrix}}_{\text{ \normalfont size $l$}}\in\operatorname{Mat}_{l+1}(\overline{k}[t])\cap \operatorname{GL}_{l+1}(\overline{k}(t)).
       \end{equation}
       %This corresponds to gamma values ($C \oplus M_l$ in the CPTY paper).
        %Such like Section \ref{}, 
        We rather consider the $l$-th derived pre-$t$-motive $N^{(l)}$ of $N$, instead of $N$ itself. The pre-$t$-motive $N^{(l)}$ is defined by the diagonal matrix $\overline{\Phi}_l$ given by
            \begin{align}\label{DefoverlinePhi}
                \ \ \ \ \ &\left( (t-\theta)^{(-l+1)}\cdots (t-\theta) \right) \oplus\begin{pmatrix}
                    (t-\theta)&0&\cdots&\cdots&0\\
                    0&(t-\theta^{(-1)})&0&\cdots&0\\
                    \vdots&0&(t-\theta^{(-2)})&\ddots &\vdots\\
                    \vdots&\vdots&\ddots&\ddots&0\\
                    0&0&\cdots&0&(t-\theta^{(-l+1)})
                \end{pmatrix}\\
                &\in\operatorname{Mat}_{l+1}(\overline{k}[t])\cap \operatorname{GL}_{l+1}(\overline{k}(t)).
            \end{align}
        We note that the defining matrix $\overline{\Phi}_l$ has a rigid analytic trivialization
                \begin{equation}\label{DefoverlinePsi}
                    \overline{\Psi}_l:=(\Omega) \oplus \begin{pmatrix}
                        \Omega_l&0&\cdots&\cdots&0\\
                        0&\Omega_l^{(-1)}&0&\cdots&0\\
                        \vdots&0&\Omega_l^{(-2)}&\ddots &\vdots\\
                        \vdots&\vdots&\ddots&\ddots&0\\
                        0&0&\cdots&0&\Omega_l^{(-l+1)}
                    \end{pmatrix}.
                \end{equation}
            Namely, we have $\overline{\Psi}_l^{(-l)}=\overline{\Phi}_l\overline{\Psi}_l$.
        
            Let us consider the $t$-motivic Galois group $\Gamma_{\rho_nN^{(l)}}$ of the pre-$t$-motive $\rho_nN^{(l)}$. Take variables $a_j$ and $a_{i,\,j}$ for $0 \leq i \leq l-1$ and $0 \leq j \leq n$.
        We also let $\alpha_j$ be the square diagonal matrix 
            \begin{equation}
                \begin{pmatrix}
                        a_{j}&0&\cdots&\cdots&0\\
                        0&a_{0,\,j}&0&\cdots&0\\
                        \vdots&0&a_{1,\,j}&\ddots &\vdots\\
                        \vdots&\vdots&\ddots&\ddots&0\\
                        0&0&\cdots&0&a_{l-1,\,j}
                    \end{pmatrix}.
                \end{equation}
            We let $G$ be the algebraic group consisting of the invertible (block) matrices of the form
                \begin{equation}
                    \begin{pmatrix}
                           \alpha_0 & 0 &\cdots&0\\
                           \alpha_1 & \alpha_0&\cdots&0\\
                           \vdots& \vdots&\ddots&\vdots \\
                           \alpha_{n}& \alpha_{n-1}&\cdots&\alpha_0
                    \end{pmatrix}.\label{shapeofGammaN}
                \end{equation}
            One can check that $G$ is commutative by its form.
            Indeed, if we take matrices
            \begin{equation}
                A=\begin{pmatrix}
                           \alpha_0 & 0 &\cdots&0\\
                           \alpha_1 & \alpha_0&\cdots&0\\
                           \vdots& \vdots&\ddots&\vdots \\
                           \alpha_{n}& \alpha_{n-1}&\cdots&\alpha_0
                    \end{pmatrix} \text{ and } A^\prime =\begin{pmatrix}
                           \alpha_0^\prime & 0 &\cdots&0\\
                           \alpha_1^\prime & \alpha_0^\prime&\cdots&0\\
                           \vdots& \vdots&\ddots&\vdots \\
                           \alpha_{n}^\prime& \alpha_{n-1}^\prime&\cdots&\alpha_0^\prime
                    \end{pmatrix}
            \end{equation}
            in $G$, where $\alpha_0,\,\dots,\,\alpha_n,\,\alpha_0^\prime,\,\dots,\,\alpha_n^\prime$ are diagonal matrices of size $l+1$, then the $i,\,j$ block of products $AA^\prime$ and $A^\prime A$ are respectively given by 
            \begin{equation}
                \sum_{m=0}^{i-j}\alpha_{i-j-m}\alpha_m^\prime \text{ and } \sum_{m=0}^{i-j}\alpha_{i-j-m}^\prime \alpha_m
            \end{equation}
            for $1 \leq j \leq i \leq n$.
            As $\alpha_0,\,\dots,\,\alpha_n,\,\alpha_0^\prime,\,\dots,\,\alpha_n^\prime$ are diagonal, they commute each other and hence the commutativity of $G$ holds.
            Note that the matrix $\widetilde{(\overline{\Psi}_l)} \in \operatorname{GL}_{(n+1)(l+1)}(\mathbb{L}\otimes \mathbb{L})$ is of the form \eqref{shapeofGammaN}. Here, the variables $a_j$ and $a_{i,\,j}$ respectively correspond to the quantities
                \begin{equation}
                    \sum_{j^\prime =0}^j (\partial^{(j^\prime)}\Omega^{-1})\otimes(\partial^{(j-j^\prime)}\Omega^{})
                    \text{ and }
                    \sum_{j^\prime =0}^j (\partial^{(j^\prime)}(\Omega_l^{-1})^{(-i)})\otimes(\partial^{(j-j^\prime)}\Omega_l^{(-i)}).
                \end{equation}
            Therefore, $\widetilde{(\overline{\Psi}_l)}$ is an element of $\Gamma_{\rho_n N^{(l)}}(\mathbb{L}\otimes \mathbb{L})$ and the $t$-motivic Galois group $\Gamma_{\rho_n N^{(l)}}$ is a closed subgroup of $G$ by definition and hence is commutative.

It follows from Theorem \ref{RefinedPapanikolasThm} that
\begin{equation}
    \dim \Gamma_N=\trdeg_{\overline{k}}\overline{k}\left(\partial_{t}^{(n^\prime)}\Omega|_{t=\theta},\,\partial_{t}^{(n^\prime)}\Omega_l^{(-l^\prime)}|_{t=\theta} \, \middle|\, 0\leq n^\prime \leq n,\,0\leq l^\prime\leq l-1
        \right).
\end{equation}
Considering the hyperderivatives of both sides of the relation $\Omega\sim\Omega_l \Omega_l^{(-1)}\cdots \Omega_l^{(-l+1)}$, we obtain
\begin{align} \label{hyptran:Omega_1:and:omega_l}&\trdeg_{\overline{k}}\Bigl(\overline{k}\left(\partial_{t}^{(n^\prime)}\Omega|_{t=\theta},\,\partial_{t}^{(n^\prime)}\Omega_l^{(-l^\prime)}|_{t=\theta} \, \middle|\, 0\leq n^\prime \leq n,\,0\leq l^\prime\leq l-1
        \right)\Bigr)\\=&\trdeg_{\overline{k}}\Bigl(\overline{k}\left(\partial_{t}^{(n^\prime)}\Omega_l^{(-l^\prime)}|_{t=\theta} \, \middle|\, 0\leq n^\prime \leq n,\,0\leq l^\prime\leq l-1
        \right)\Bigr)=l(n+1),
\end{align}
by \eqref{hyptran:omega:and:G}, hence $\dim \Gamma_N=l(n+1)$.

\section{Pre-$t$-motives associated to multiple zeta values}\label{section:t-motives mzv}

Following \cite{AT09} and \cite{C14}, we recall the period interpretations of MZVs 
%and special values of Carlitz multiple polylogarithms 
by considering their deformations series and constructing appropriate pre-$t$-motives which interpret those deformation series by entries of rigid analytic trivializations.
Their prolongations will be also discussed and we consider $t$-motivic Galois group of direct product of those pre-$t$-motives. 

We first recall the series $\mathscr{L}_{\mathbf{s},\,jl}$ by \cite{AT09}.   
Given a polynomial $u=\sum_ia_it^i\in\ol{k}[t]$, we define $||u||_{\infty}:=\max_i\{ |a_i|_{\infty} \}$.

 For simplicity, we take a sequence of polynomials $u_1,\,u_2,\,\dots,\,\in \overline{k}[t]$ such that
      \begin{equation}\label{polylogconvergecondition1}
                ||u_i||_\infty<| \theta|_\infty^{\frac{i q}{q-1}}.
            \end{equation}
     %   For example, we can choose as $u_i=H_{i-1}(t)$.
    For example, we can choose $u_i$ as Anderson-Thakur polynomial $H_{i-1}(t)\in A[t]$ (\cite{AT90}).
Then, we define the following series:
\begin{equation}\label{DeftCMPL}
\mathscr{L}_{\mathbf{s},\,jl}=\mathscr{L}_{jl}:=\sum_{i_l>\cdots>i_{j-1}\geq 0}(\Omega^{s_{j-1}}u_{s_{j-1}})^{(i_{j-1})}\cdots (\Omega^{s_l}u_{s_l})^{(i_l)}\in\mathbb{T}
\end{equation}
 for each index $\mathbf{s}=(s_1, \ldots, s_d)\in\mathbb{N}^d$ and $1 \leq l \leq j \leq d+1$ (\cite{AT09} and \cite{C14}).
%We note that the convergence condition introduced here is given by \cite{C14}.

        For each pair $\hat{\mathbf{s}}=(\mathbf{s},\,m) $ of an index and non-negative integer, we put
            \begin{equation}
                M[\hat{\mathbf{s}}]:=\rho_mM[\mathbf{s}]
            \end{equation}
        where $M[\mathbf{s}]$ is the pre-$t$-motive defined by the matrix
            \begin{equation}
                \begin{pmatrix}
                    (t-\theta)^{s_{1}+\cdots+s_{d}} & 0& \cdots && \\
                    (t-\theta)^{s_{1}+\cdots+s_{d}} u_{s_1}^{(-1)} & (t-\theta)^{s_{2}+\cdots+s_{d}}&0&\cdots&\\
                    &\ddots&\ddots&\ddots& \\
                    & &&(t-\theta)^{s_{d}}& 0\\
                    & & &(t-\theta)^{s_{d}} u_{s_d}^{(-1)} &1
                \end{pmatrix}
            \end{equation}
        which has a rigid analytic trivialization
            \begin{equation}
                \begin{pmatrix}
            \Omega^{s_{1}+\cdots+s_{d}} & & & & & \\
            \Omega^{s_{2}+\cdots+s_{d}}\mathscr{L}_{21} & \Omega^{s_2+\cdots +s_d} &  &  &  & \\
            \vdots & \Omega^{s_3+\cdots +s_d}\mathscr{L}_{32} & \ddots &  &  & \\
            \vdots & \vdots & \ddots & \ddots &  & \\
            \Omega^{s_{d}}\mathscr{L}_{d1} & \Omega^{s_d}\mathscr{L}_{d2} &  & \ddots & \Omega^{s_d} & \\
            \mathscr{L}_{(d+1) 1}  & \mathscr{L}_{(d+1) 2} & \cdots & \cdots & \mathscr{L}_{(d+1) d} & 1
        \end{pmatrix},
            \end{equation}
        here we put $\mathbf{s}=(s_1,\,s_2,\,\dots,\,s_d)$.

        Take a non-negative integer $n$ and a finite set $I$ satisfying $\eqref{subclosed}$. 
        We put $\hat{I}:=I \times \{0,\,1,\,\dots,\,n\}$ and enumerate this set as $\hat{I}=\{\hat{s}_1,\,\hat{s}_2,\,\hat{s}_3\,\dots,\}$ so that we have the following condition:
        \begin{enumerate}
            \item[(En)] 
                Let us take any $\hat{\mathbf{s}}_i=(\mathbf{s},\,m) \in \hat{I}$,  $\mathbf{s}^\prime \in \operatorname{Sub}(\mathbf{s})$, and $m^\prime \leq m$, then we have $j \leq i$ such that $\hat{\mathbf{s}}_j=(\mathbf{s}^\prime,\,m^\prime)$.\label{enumeratecondition}
        \end{enumerate}
        For $0 \leq i \leq \# \hat{I}$, we put
            \begin{equation}\label{DefMi}
                M_i:=\rho_n C \oplus \bigoplus_{0 \leq j \leq i} M[\hat{\mathbf{s}}_j].
            \end{equation}    
\begin{eg}\label{ExampleMi}
    We take distinct positive integers $s, s^\prime$ and put $I=\{(s),\,(s^\prime),\,(s,\, s^\prime)\}$. Consider the case where $n=2$, $u_s=H_{s-1}$, and $u_{s^\prime}=H_{s^\prime-1}$. If we enumerate $\hat{I}$ as $\hat{\mathbf{s}}_1=((s),\,0)$, $\hat{\mathbf{s}}_2=((s^\prime),\,0)$, $\hat{\mathbf{s}}_3=((s,\,s^\prime),\,0)$, $\hat{\mathbf{s}}_4=((s),\,1)$, and so on, then the pre-$t$-motive $M_2$ is defined by
    \begin{equation}
        \Phi_2:=\begin{pmatrix}
            t-\theta&0&0\\
            1&t-\theta&0\\
            0&1&t-\theta
        \end{pmatrix}
        \oplus
        \begin{pmatrix}
            (t-\theta)^s&\\
            (t-\theta)^sH_{s-1}^{(-1)}&1
        \end{pmatrix}
        \oplus
        \begin{pmatrix}
            (t-\theta)^{s^\prime}&\\
            (t-\theta)^{s^\prime}H_{s^\prime-1}^{(-1)}&1
        \end{pmatrix}
    \end{equation}
    which has a rigid analytic trivialization
    \begin{equation}
        \Psi_2:=\begin{pmatrix}
            \Omega&&\\
            \D_t^{(1)}\Omega&\Omega&\\
            \D_t^{(2)}\Omega&\D_t^{(1)}\Omega&\Omega
        \end{pmatrix}
        \oplus
        \begin{pmatrix}
            \Omega^{s}&\\
            \mathscr{L}_{(s),21}&1
        \end{pmatrix}
        \oplus
        \begin{pmatrix}
            \Omega^{s^\prime}&\\
            \mathscr{L}_{(s^\prime),21}&1
        \end{pmatrix}.
    \end{equation}
    A defining matrix of pre-$t$-motive $M_3$ is given by
    \begin{equation}
        \Phi_3:=\Phi_2 \oplus \begin{pmatrix}
            (t-\theta)^{s+s^\prime}&&\\
            (t-\theta)^{s+s^\prime}H_{s-1}^{(-1)}&(t-\theta)^{s^\prime}&\\
            0&(t-\theta)^{s^\prime}H_{s^\prime-1}^{(-1)}&1
        \end{pmatrix},
    \end{equation}
    that have rigid analytic trivialization
    \begin{equation}
        \Psi_3:=\Psi_2 \oplus 
        \begin{pmatrix}
            \Omega^{s+s^\prime}&&\\
            \Omega^{s^\prime}\mathscr{L}_{(s,\,s^\prime),\,21}&\Omega^{s^\prime}&\\
            \mathscr{L}_{(s,\,s^\prime),31}&\mathscr{L}_{(s,\,s^\prime),32}&1
        \end{pmatrix}.
    \end{equation}
    The pre-$t$-motive $M_4$ is represented by the matrix $\Phi_4$ given by
    \begin{equation}
        \Phi_3\oplus
        \begin{pmatrix}
            (t-\theta)^s&0&0&0\\
            (t-\theta)^sH_{s-1}^{(-1)}&1&0&0\\
            \D_t^{(1)}\left((t-\theta)^s\right) &0&(t-\theta)^s&0\\
            \D_t^{(1)}\left((t-\theta)^sH_{s-1}^{(-1)}\right)&0&(t-\theta)^sH_{s-1}^{(-1)}&1
        \end{pmatrix}
    \end{equation}
    with the matrix
    \begin{equation}
        \Psi_3 \oplus
        \begin{pmatrix}
            \Omega^s&0&0&0\\
            \mathscr{L}_{(s),\,21}&1&0&0\\
            \D_t^{(1)}\left(\Omega^s\right) &0&\Omega^s&0\\
            \D_t^{(1)} \left(\mathscr{L}_{(s),\,21}\right) &0&\mathscr{L}_{(s),\,21}&1
        \end{pmatrix}
    \end{equation}
    as its rigid analytic trivialization.
\end{eg}

        The dimension of the $t$-motivic Galois group $\Gamma_{M_i}$ is equal to the transcendence degree of the field
            \begin{equation}
                \overline{k}\bigl(
                    \partial^{(n^\prime)}\Omega|_{t=\theta},\,
                \partial^{(m)}\mathscr{L}_{\mathbf{s},\,(\dep{\mathfrak{s}}+1)1}|_{t=\theta}
                \mid
                0 \leq n^\prime \leq n,\,(\mathbf{s},\,m)=\hat{\mathbf{s}}_j \text{ for }j \leq i\bigr)
            \end{equation}
over $\overline{k}$ by Theorem \ref{RefinedPapanikolasThm} (ii), see \eqref{DeftCMPL}. Therefore we have
            \begin{equation}\label{dimGammasucc}
                \dim \Gamma_{M_{i-1}} \leq \dim \Gamma_{M_i} \leq \dim \Gamma_{M_{i-1}}+1.
            \end{equation}

        As $M_{i-1}$ and $\rho_n C$ are direct summands of the pre-$t$-motive $M_i$, we have surjective homomorphisms $\pi_i:\Gamma_{M_i^{(l)}} \twoheadrightarrow \Gamma_{(\rho_n C)^{(l)}}$ and $\overline{f}_i:\Gamma_{M_i^{(l)}} \twoheadrightarrow \Gamma_{M_{i-1}^{(l)}}$ as in Lemma \ref{LemmaTannakianProj}.
        For $0 \leq i \leq \# \hat{I}$, we put
            \begin{equation}
                U_i:=\ker \left( \Gamma_{M_i^{(l)}} \twoheadrightarrow \Gamma_{(\rho_n C)^{(l)}}\right).
            \end{equation}
         Then $\overline{f}_i$ induces a surjective morphism $f_i: U_i \twoheadrightarrow U_{i-1}$ for each $1 \leq i \leq \# \hat{I}$ and we put
            \begin{equation}\label{DefViker}
                V_i:=\ker \left(f_i: U_i \twoheadrightarrow U_{i-1}\right).
            \end{equation}

         \begin{eg}
            We continue to consider the same case as Example \ref{ExampleMi}. Take variables $a_0$, $a_1$, $a_2$, $x$, $y$, $z$, and $w$. Let $G_2$ be the algebraic subvariety of $\operatorname{GL}_{7/\overline{\mathbf{F}_q(t)}}$ consisting of matrices of the form
            \begin{equation}
                \begin{pmatrix}
                    a_0&0&0\\
                    a_1&a_0&0\\
                    a_2&a_1&a_0
                \end{pmatrix}
                \oplus
                \begin{pmatrix}
                    a_0^s&0\\
                    a_0^s x&1
                \end{pmatrix}
                \oplus
                \begin{pmatrix}
                    a_0^{s^\prime}&0\\
                    a_0^{s^\prime} y&1
                \end{pmatrix}.
            \end{equation}    
            Then the matrix $\widetilde{\Psi}_2$ is $\mathbb{L}\otimes_{\overline{k}(t)} \mathbb{L}$-valued point of $G_2$ and hence the $t$-motivic Galois group $\Gamma_{M_2}$ is a closed subvariety of $G_2$. Similarly, the $t$-motivic Galois group $\Gamma_{M_3}$ is contained in $G_3$ consisting of matrices of the form
            \begin{equation}
                \begin{pmatrix}
                    a_0&0&0\\
                    a_1&a_0&0\\
                    a_2&a_1&a_0
                \end{pmatrix}
                \oplus
                \begin{pmatrix}
                    a_0^s&0\\
                    a_0^s x&1
                \end{pmatrix}
                \oplus
                \begin{pmatrix}
                    a_0^{s^\prime}&0\\
                    a_0^{s^\prime} y&1
                \end{pmatrix}
                \oplus
                \begin{pmatrix}
                    a_0^{s+s^\prime}&&\\
                    a_0^{s+s^\prime} x&a_0^{s^\prime}&\\
                    a_0^{s+s^\prime}z&a_0^{s^\prime} y&1
                \end{pmatrix},
            \end{equation}
            and hence all elements of $U_3$ are of the form
            \begin{equation}
                I_3
                \oplus
                \begin{pmatrix}
                    1&0\\
                    x&1
                \end{pmatrix}
                \oplus
                \begin{pmatrix}
                    1&0\\
                    y&1
                \end{pmatrix}
                \oplus
                \begin{pmatrix}
                    1&&\\
                     x&1&\\
                    z& y&1
                \end{pmatrix}.
            \end{equation}
            The homomorphism $\overline{f}_3$ is described as follows by Lemma \ref{LemmaTannakianProj},
            \begin{align}
                &\begin{pmatrix}
                    a_0&0&0\\
                    a_1&a_0&0\\
                    a_2&a_1&a_0
                \end{pmatrix}
                \oplus
                \begin{pmatrix}
                    a_0^s&0\\
                    a_0^s x&1
                \end{pmatrix}
                \oplus
                \begin{pmatrix}
                    a_0^{s^\prime}&0\\
                    a_0^{s^\prime} y&1
                \end{pmatrix}
                \oplus
                \begin{pmatrix}
                    a_0^{s+s^\prime}&&\\
                    a_0^{s+s^\prime} x&a_0^{s^\prime}&\\
                    a_0^{s+s^\prime}z&a_0^{s^\prime} y&1
                \end{pmatrix}\\
                &\mapsto 
                \begin{pmatrix}
                    a_0&0&0\\
                    a_1&a_0&0\\
                    a_2&a_1&a_0
                \end{pmatrix}
                \oplus
                \begin{pmatrix}
                    a_0^s&0\\
                    a_0^s x&1
                \end{pmatrix}
                \oplus
                \begin{pmatrix}
                    a_0^{s^\prime}&0\\
                    a_0^{s^\prime} y&1
                \end{pmatrix}.
            \end{align}
            Therefore, the kernel $V_3$ is a closed subgroup of
            \begin{equation}\label{example V_3}
            \left \{ I_7 \oplus \begin{pmatrix}
                    1&0&0\\
                    0&1&0\\
                    z&0&1
                \end{pmatrix}\,\middle|\, z \in \overline{\mathbb{F}_q(t)} \right\} \simeq \mathbb{G}_a.
        \end{equation}
         \end{eg}
         \begin{eg}
             We continue to use the setting of Example \ref{ExampleMi}. If $G_4$ is the algebraic variety consisting of matrices of the form
            \begin{align}
                 &\begin{pmatrix}
                    a_0&0&0\\
                    a_1&a_0&0\\
                    a_2&a_1&a_0
                \end{pmatrix}
                \oplus
                \begin{pmatrix}
                    a_0^s&0\\
                    a_0^s x&1
                \end{pmatrix}
                \oplus
                \begin{pmatrix}
                    a_0^{s^\prime}&0\\
                    a_0^{s^\prime} y&1
                \end{pmatrix}\\
                &\quad \quad
                \oplus
                \begin{pmatrix}
                    a_0^{s+s^\prime}&&\\
                    a_0^{s+s^\prime} x&a_0^{s^\prime}&\\
                    a_0^{s+s^\prime}z&a_0^{s^\prime} y&1
                \end{pmatrix}
                \oplus
                \begin{pmatrix}
                    a_0^s&0&0&0\\
                    a_0^sx&1&0&0\\
                    s a_0^{s-1}a_1&0&a_0^s&0\\
                    s a_0^{s-1}a_1 x+a_0^sw&0&a_0^sx&1
                \end{pmatrix},
             \end{align}
            then we have an immersion $\Gamma_{M_4} \subset G_4$ and the homomorphism $\overline{f}_4$ given by
             \begin{align}
                 &\begin{pmatrix}
                    a_0&0&0\\
                    a_1&a_0&0\\
                    a_2&a_1&a_0
                \end{pmatrix}
                \oplus
                \begin{pmatrix}
                    a_0^s&0\\
                    a_0^s x&1
                \end{pmatrix}
                \oplus
                \begin{pmatrix}
                    a_0^{s^\prime}&0\\
                    a_0^{s^\prime} y&1
                \end{pmatrix}\\
                &\quad \quad
                \oplus
                \begin{pmatrix}
                    a_0^{s+s^\prime}&&\\
                    a_0^{s+s^\prime} x&a_0^{s^\prime}&\\
                    a_0^{s+s^\prime}z&a_0^{s^\prime} y&1
                \end{pmatrix}
                \oplus
                \begin{pmatrix}
                    a_0^s&0&0&0\\
                    a_0^sx&1&0&0\\
                    s a_0^{s-1}a_1&0&a_0^s&0\\
                    s a_0^{s-1}a_1 x+a_0^sw&0&a_0^sx&1
                \end{pmatrix}\\
                &\mapsto 
                \begin{pmatrix}
                    a_0&0&0\\
                    a_1&a_0&0\\
                    a_2&a_1&a_0
                \end{pmatrix}
                \oplus
                \begin{pmatrix}
                    a_0^s&0\\
                    a_0^s x&1
                \end{pmatrix}
                \oplus
                \begin{pmatrix}
                    a_0^{s^\prime}&0\\
                    a_0^{s^\prime} y&1
                \end{pmatrix}
                \oplus
                \begin{pmatrix}
                    a_0^{s+s^\prime}&&\\
                    a_0^{s+s^\prime} x&a_0^{s^\prime}&\\
                    a_0^{s+s^\prime}z&a_0^{s^\prime} y&1
                \end{pmatrix},
             \end{align}
        (see also Lemma \ref{LemmaTannakianProj}). Hence the kernel $V_4$ is a closed subgroup of 
        \begin{equation}\label{example V_4}
            \left \{ I_{10} \oplus \begin{pmatrix}
                    1&0&0&0\\
                    0&1&0&0\\
                    0&0&1&0\\
                    w&0&0&1
                \end{pmatrix}\,\middle|\, w \in \overline{\mathbb{F}_q(t)} \right\} \simeq \mathbb{G}_a.
        \end{equation}
        Any elements of $U_4(\overline{\mathbb{F}_{q^l}(t)})$ is of the form
        \begin{equation}
             I_3
                \oplus
                \begin{pmatrix}
                    1&0\\
                    x&1
                \end{pmatrix}
                \oplus
                \begin{pmatrix}
                    1&0\\
                    y&1
                \end{pmatrix}
                \oplus
                \begin{pmatrix}
                    1&&\\
                     x&1&\\
                    z& y&1
                \end{pmatrix}
                \oplus
                \begin{pmatrix}
                    1&0&0&0\\
                    x&1&0&0\\
                    0&0&1&0\\
                    w&0&x&1
                \end{pmatrix}.
        \end{equation}
         \end{eg}
         
         The above examples show that $V_3$ and $V_4$ are a closed subvarietis of \eqref{example V_3} and of \eqref{example V_4}, respectively. 
         In general,
         we can deduce by the arguments similar to those in \cite[\S 5.3]{Ma24} that the kernel $V_i $ is a closed subvariety of
        \begin{equation}
            \left \{ I_N \oplus \begin{pmatrix}
        1&0 & \cdots&\cdots& 0 \\
        0& 1&0&\cdots&\vdots\\
        \vdots&0&\ddots&\ddots&\vdots\\
        0&\vdots&\ddots&1&0\\
        x& 0&\cdots&0&1
    \end{pmatrix}\,\middle|\, x \in \overline{\mathbb{F}_q(t)} \right\} \simeq \mathbb{G}_a
        \end{equation}
    for each $1 \leq I \leq \# \hat{I}$ from the assumption \eqref{subclosed} and (En). Here, $N=(n+1)+\sum_{j <i}(m+1)(\dep \mathbf{s})$ where we write $\hat{\mathbf{s}}_j=(\mathbf{s},\,m)$.
\section{A proof of main result}

In this section, we prove Theorem \ref{MainThm}, which contains Theorem \ref{intro main result}.
We consider the pre-$t$-motive $(\rho_n N \oplus M_i)^{(l)}$ of level $l$ and compare its $t$-motivic Galois group with those of $(\rho_n C)^{(l)}$, $M_i^{(l)}$, and $(\rho_n N)^{(l)}$ in order to accomplish the proof. 
The theorem implies an algebraic independence (Corollary \ref{maincor}) of the certain family of special values of the deformation series $G$, $\zeta_A^{\mathrm{AT}}$ and their hyperderivatives.
% of Gamma values and multiple zeta values.

For simplicity, we continue to take a sequence of polynomials $u_1,\,u_2,\,\dots,\,\in \overline{k}[t]$ such that \eqref{polylogconvergecondition1} holds for each $i \geq 1$, and briefly denote by $\mathscr{L}_{\mathbf{s}}$ the series $\mathscr{L}_{\mathbf{s},\,(d+1)1}$ in \eqref{DeftCMPL} for each index $\mathbf{s}=(s_1,\,\dots,\,s_d)$.
Let us fix a finite set $I$ of indices satisfying \eqref{subclosed} and a enumeration $\{\hat{s}_1,\,\hat{s}_2,\,\hat{s}_3\,\dots,\}$ of $\hat{I}:=I \times \{0,\,1,\,\dots,\,n\}$ satisfying the condition (En). Then we can consider pre-$t$-motive $M_i$ in \eqref{DefMi} for $0 \leq i \leq \# \hat{I}$. 
Further, we fix again a positive integer $l$.

For $0 \leq i \leq \# \hat{I}$, we have surjective homomorphism $\varpi_i:\Gamma_{(\rho_n N \oplus M_i)^{(l)}} \twoheadrightarrow \Gamma_{(\rho_n N)^{(l)}}$ since $(\rho_n N)^{(l)}$ is a direct summand of $(\rho_n N \oplus M_i)^{(l)}$ (see Lemma \ref{LemmaTannakianProj}). We put
            \begin{equation}
                \mathcal{U}_i:=\ker \left(\Gamma_{(\rho_n N \oplus M_i)^{(l)}} \twoheadrightarrow \Gamma_{(\rho_n N)^{(l)}}\right).
            \end{equation}     
    
    Similarly, we have surjective homomorphism $\overline{\varphi}_i :\Gamma_{(\rho_n N \oplus M_i)^{(l)}}\twoheadrightarrow \Gamma_{M_i^{(l)}}$.
    We note that there is following commutative diagram
        \begin{equation}\label{diagramcase2}
                \begin{tikzpicture}[auto]

    \node (1-3) at (-1, 2) {$1$}; \node (22) at (1, 2) {$\mathcal{U}_i$}; \node (42) at (4, 2) {$\Gamma_{(\rho_n N \oplus M_i)^{(l)}}$}; \node (62) at (7.5, 2) {$\Gamma_{(\rho_n N)^{(l)}}$};\node (82) at (9.5, 2) {$1$};
    \node (1-5) at (-1, 0) {$1$}; \node (20) at (1, 0) {$U_i$}; \node (40) at (4, 0) {$\Gamma_{M_i^{(l)}} $}; \node (60) at (7.5, 0) {$\Gamma_{(\rho_n C)^{(l)}}$};\node (80) at (9.5, 0) {$1$};
    \draw[->>] (62) to node {$\psi_i$}(60);
    
    \draw[->>] (42) to node {$\overline{\varphi}_i$}(40);
    \draw[->] (22) to node {$ \varphi_{i}$}(20);

    \draw[{Hooks[right]}->] (22) to node {}(42);
    \draw[{Hooks[right]}->] (20) to node {}(40);

    \draw[->>] (42) to node {$\varpi_i$}(62);
    \draw[->>] (40) to node {$\pi_i$}(60);

    \draw[->] (1-3) to node {}(22);
    \draw[->] (1-5) to node {}(20);
    
    \draw[->] (62) to node {}(82);
    \draw[->] (60) to node {}(80);
    \end{tikzpicture}
            \end{equation}
        for $0 \leq  i \leq \#\hat{I}$.

        \begin{eg}
            Let $l=2$. We take distinct positive integers $s, s^\prime$ and put $I=\{(s),\,(s^\prime),$ $(s,\,s^\prime)\}$. Also we consider the case where $n=2$, $u_s=H_{s-1}$, and $u_{s^\prime}=H_{s^\prime-1}$.
            Then the pre-$t$-motive $\rho_2N\oplus M_3$ is defined by the matrix $\overline{\Phi}_2 \oplus \Phi_3$ (see \eqref{DefoverlinePhi} and Example \ref{ExampleMi}), which has a rigid analytic trivialization $\overline{\Psi}_3 \oplus \Psi_3$, see \eqref{DefoverlinePsi}.
            Therefore, elements of the group $\Gamma_{\rho_2N\oplus M_3}(\overline{\mathbb{F}_{q^l}(t)})=\Gamma_{(\rho_2N\oplus M_3)^{(l)}}(\overline{\mathbb{F}_{q^l}(t)})$ is of the form
            \begin{align}\label{formof3}
                &\begin{pmatrix}
                    a_0&&&&&&&&\\
                    &a_{0,\,0}&&&&&&&\\
                    &&a_{1,\,0}&&&&&&\\
                    a_1&&&a_0&&&&&&\\
                    &a_{0,\,1}&&&a_{0,\,0}&&&&&\\
                    &&a_{1,\,1}&&&a_{1,\,0}&&&\\
                    a_2&&&a_1&&&a_0&&\\
                    &a_{0,\,2}&&&a_{0,\,1}&&&a_{0,\,0}&\\
                    &&a_{1,\,2}&&&a_{1,\,1}&&&a_{1,\,0}
                \end{pmatrix}\\
                &\oplus\begin{pmatrix}
                    a_0&0&0\\
                    a_1&a_0&0\\
                    a_2&a_1&a_0
                \end{pmatrix}
                \oplus
                \begin{pmatrix}
                    a_0^s&0\\
                    a_0^s x&1
                \end{pmatrix}
                \oplus
                \begin{pmatrix}
                    a_0^{s^\prime}&0\\
                    a_0^{s^\prime} y&1
                \end{pmatrix}
                \oplus
                \begin{pmatrix}
                    a_0^{s+s^\prime}&&\\
                    a_0^{s+s^\prime} x&a_0^{s^\prime}&\\
                    a_0^{s+s^\prime}z&a_0^{s^\prime} y&1
                \end{pmatrix}.
            \end{align}
            The homomorphisms $\varpi_3$ maps the matrix \eqref{formof3} to
            \begin{small}
            \begin{equation}
                \begin{pmatrix}
                    a_0&&&&&&&&\\
                    &a_{0,\,0}&&&&&&&\\
                    &&a_{1,\,0}&&&&&&\\
                    a_1&&&a_0&&&&&&\\
                    &a_{0,\,1}&&&a_{0,\,0}&&&&&\\
                    &&a_{1,\,1}&&&a_{1,\,0}&&&\\
                    a_2&&&a_1&&&a_0&&\\
                    &a_{0,\,2}&&&a_{0,\,1}&&&a_{0,\,0}&\\
                    &&a_{1,\,2}&&&a_{1,\,1}&&&a_{1,\,0}
                \end{pmatrix},
            \end{equation}
            \end{small}
            hence the elements of $\mathcal{U}_3$ is of the form
            \begin{equation}\label{form3_2}
                I_9\oplus I_3
                \oplus
                \begin{pmatrix}
                    1&0\\
                    x&1
                \end{pmatrix}
                \oplus
                \begin{pmatrix}
                    1&0\\
                    y&1
                \end{pmatrix}
                \oplus
                \begin{pmatrix}
                    1&&\\
                     x&1&\\
                    z& y&1
                \end{pmatrix}
            \end{equation}
            By the homomorphism $\varphi_3$, the matrix \eqref{form3_2} in $\mathcal{U}_3(\overline{\mathbb{F}_q(t)})$ is mapped to
            \begin{equation}
                I_3
                \oplus
                \begin{pmatrix}
                    1&0\\
                    x&1
                \end{pmatrix}
                \oplus
                \begin{pmatrix}
                    1&0\\
                    y&1
                \end{pmatrix}
                \oplus
                \begin{pmatrix}
                    1&&\\
                     x&1&\\
                    z& y&1
                \end{pmatrix} \in U_3(\overline{\mathbb{F}_q(t)}).
            \end{equation}
            \indent If $\dim \Gamma_{M_3^{(l)}}=\dim \Gamma_{M_2^{(l)}}+1$, then we have 
            $$
            \dim V_3=\dim U_3-\dim U_2=\left(\dim \Gamma_{M_3^{(l)}}-(n+1)\right)-\left(\dim \Gamma_{M_2^{(l)}}-(n+1)\right)=1
            $$ 
            (for $V_3$, see \eqref{DefViker}) and hence $V_3$ is equal to 
            \begin{equation}
               \left\{I_3 \oplus I_2\oplus I_2 \oplus\begin{pmatrix}
                    1&&\\
                     0&1&\\
                    z& 0&1
                \end{pmatrix}\,\middle|\, z \in \overline{\mathbb{F}_q(t)}\right\} \simeq \mathbb{G}_{a/\overline{\mathbb{F}_q(t)}}.
            \end{equation}
            In this case, we can prove that the subgroup $V_3$  is contained in the image of $\varphi_3$.
            Take arbitrary $\xi \in \overline{\mathbb{F}_q(t)}$ and let 
            \begin{equation}
                v_\xi:=I_3 \oplus I_2\oplus I_2 \oplus\begin{pmatrix}
                    1&&\\
                     0&1&\\
                    \xi& 0&1
                \end{pmatrix}
            \end{equation}
            be the corresponding matrix. Since $\overline{\varphi}_3$ is surjective, we have $\gamma \in \Gamma_{(\rho_n \oplus M_3)^{(2)}}(\overline{\mathbb{F}_q(t)})$ such that $\overline{\varphi}_i(\gamma)=v_\xi$.
            Let us take arbitrary $\alpha \in \overline{\mathbb{F}_q(t)}^\times$. Then we can take $\delta \in \Gamma_{(\rho_n \oplus M_3)^{(2)}}(\overline{\mathbb{F}_q(t)})$ so that $\psi_i\circ\varpi_i(\delta)=\pi_3 \circ \overline{\varphi}_i(\delta)=\alpha \cdot I_3$.
            Since $\Gamma_{(\rho_n N)^{(l)}}$ is commutative, we have $\varphi_3(\delta^{-1}\gamma^{-1}\delta \gamma)=\varphi_3(\delta^{-1})\varphi_3(\gamma^{-1})\varphi_3(\delta)\varphi_3( \gamma)=I_9$, so it follows that the commutator $\delta^{-1}\gamma^{-1}\delta \gamma$ is an element of the kernel $\mathcal{U}_3(\overline{\mathbb{F}_q(t)})$.
            Since the image $\overline{\varphi}_3(\delta)$ is of the form
            \begin{equation}
                \begin{pmatrix}
                    \alpha&&\\
                    *&\alpha&\\
                    *&*&\alpha
                \end{pmatrix}
                \oplus
                \begin{pmatrix}
                    \alpha^s&\\
                    *&1
                \end{pmatrix}
                \oplus
                \begin{pmatrix}
                    \alpha^{s^\prime}&\\
                    *&1
                \end{pmatrix}
                \oplus
                \begin{pmatrix}
                    \alpha^{s+s^\prime}&&\\
                    *&\alpha^{s^\prime}&\\
                    *&*&1
                \end{pmatrix},
            \end{equation}
            we have  
            \begin{equation}\label{calexample}
            \overline{\varphi}_3 (\delta^{-1}\gamma^{-1}\delta \gamma)=\varphi_3(\delta^{-1}\gamma^{-1}\delta \gamma)=v_{(\alpha^{s+s^\prime}-1)\xi}=I_3 \oplus I_2\oplus I_2 \oplus\begin{pmatrix}
                    1&&\\
                     0&1&\\
                    (\alpha^{s+s^\prime}-1)\xi& 0&1
                \end{pmatrix},
            \end{equation}
            which implies that $V_3 \subset \im \varphi_3$ as $\xi$ and $\alpha$ are arbitrarily chosen.
        \end{eg}
        
    \begin{lem}\label{LemPhiinj}
        The homomorphism $\varphi_i$ is an injection.
    \end{lem}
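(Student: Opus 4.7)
The plan is to deduce the injectivity of $\varphi_i$ from the fact that a Tannakian Galois group of a direct sum embeds into the product of the Galois groups of the summands, combined with a diagram chase on \eqref{diagramcase2}.

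First I would recall that, by the definition of $\Gamma_\Psi$ in \eqref{GammaPsi} applied to the rigid analytic trivialization $\overline{\Psi}_l \oplus \Psi_{M_i}$ of the defining matrix of $(\rho_n N \oplus M_i)^{(l)}$, the group $\Gamma_{(\rho_n N \oplus M_i)^{(l)}}$ is tautologically a closed subgroup of $\Gamma_{(\rho_n N)^{(l)}} \times \Gamma_{M_i^{(l)}}$. Moreover, Lemma \ref{LemmaTannakianProj} identifies the two projections of this embedding with $\varpi_i$ and $\overline{\varphi}_i$, so the combined homomorphism
\begin{equation}
(\varpi_i,\,\overline{\varphi}_i):\Gamma_{(\rho_n N \oplus M_i)^{(l)}} \hookrightarrow \Gamma_{(\rho_n N)^{(l)}} \times \Gamma_{M_i^{(l)}}
\end{equation}
is a closed immersion, hence injective on points.

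Next I would carry out the diagram chase. Take $\gamma \in \mathcal{U}_i(R)$ for an arbitrary $\overline{\mathbb{F}_{q^l}(t)}$-algebra $R$ with $\varphi_i(\gamma) = 1$. By definition $\mathcal{U}_i = \ker \varpi_i$, we have $\varpi_i(\gamma) = 1$ automatically. On the other hand, $\varphi_i$ is by construction nothing but the corestriction of $\overline{\varphi}_i|_{\mathcal{U}_i}$ to the subgroup $U_i \subset \Gamma_{M_i^{(l)}}$ (which is well-defined because the commutativity $\pi_i \circ \overline{\varphi}_i = \psi_i \circ \varpi_i$ of the right square of \eqref{diagramcase2} forces $\overline{\varphi}_i(\mathcal{U}_i) \subset U_i$). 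Hence $\varphi_i(\gamma) = 1_{U_i}$ is exactly the assertion $\overline{\varphi}_i(\gamma) = 1$. Combining, $(\varpi_i,\,\overline{\varphi}_i)(\gamma) = (1,1)$, so $\gamma = 1$ by the injectivity established in the first step.

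No step here should present serious difficulty; the only point requiring care is to be explicit that $\varphi_i$ really is the restriction of $\overline{\varphi}_i$ (not some other induced map), so that vanishing in $U_i$ coincides with vanishing in $\Gamma_{M_i^{(l)}}$. Once that is spelled out, injectivity of $\varphi_i$ is immediate from the tautological embedding of the Galois group of a direct sum into the product.
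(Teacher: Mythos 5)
Your argument is correct and is essentially the paper's own proof: both rest on the closed immersion $\Gamma_{(\rho_n N \oplus M_i)^{(l)}} \hookrightarrow \Gamma_{(\rho_n N)^{(l)}} \times \Gamma_{M_i^{(l)}}$ together with the identification of its projections with $\varpi_i$ and $\overline{\varphi}_i$ via Lemma \ref{LemmaTannakianProj}, so that $\ker\varphi_i \subset \ker\varpi_i \cap \ker\overline{\varphi}_i$ is trivial. Your explicit remark that $\varphi_i$ is the corestriction of $\overline{\varphi}_i|_{\mathcal{U}_i}$ to $U_i$ is a welcome clarification but does not change the substance of the argument.
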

    \begin{proof}
       It is enough to verify that the intersection
      $
           \left(\ker \varpi_i \right) \cap \left(\ker \overline{\varphi}_i \right)
       $
       is a trivial algebraic group since $\ker \varphi_i$ is included in $\left(\ker \varpi_i \right) \cap \left(\ker \overline{\varphi}_i \right)$. 
       As we have $(\rho_n N \oplus M_i)^{(l)}=(\rho_n N)^{(l)} \oplus M_i^{(l)}$, there exists an closed immersion
           $\Gamma_{(\rho_n N \oplus M_i)^{(l)}} \hookrightarrow \Gamma_{(\rho_n N)^{(l)}} \times \Gamma_{M_i^{(l)}} 
       $
       and the following commutative diagram:
       \begin{center}
\begin{tikzpicture}[auto]
 \node (33) at (3, 3) {$\Gamma_{(\rho_n N \oplus M_i)^{(l)}}$};
\node (00) at (0, 0) {$\Gamma_{(\rho_n N)^{(l)}}$}; \node (30) at (3, 0) {$\Gamma_{(\rho_n N)^{(l)}} \times \Gamma_{M_i^{(l)}}$}; \node (60) at (6, 0) {$\Gamma_{M_i^{(l)}}$};

\draw[->>] (33) to node {$\varpi_i$}(00);
\draw[->>] (33) to node {$\overline{\varphi}_i$}(60);
\draw[->>] (30) to node {$\operatorname{pr}_1$}(00);
\draw[->>] (30) to node {$\operatorname{pr}_2$}(60);

\draw[{Hooks[right]}->] (33) to node {} (30);
\end{tikzpicture}

\end{center}
(see Lemma \ref{LemmaTannakianProj}).
The intersection of $\ker \mathrm{pr}_1$ and $\ker \mathrm{pr}_2$ is trivial. These kernels of the projections respectively contain kernels $\ker \varpi_i$ and $\ker \overline{\varphi}_i$, hence the intersection $\left(\ker \varpi_i \right) \cap \left(\ker \overline{\varphi}_i \right)$ is also trivial. 
    \end{proof}

    \begin{prop}\label{Prop_on_Galois_grp}
        The equality
        \begin{equation}\label{desiredineq}
                \dim \Gamma_{(\rho_n N \oplus M_i)^{(l)}} = \dim \Gamma_{M_i^{(l)}}+\dim \Gamma_{(\rho_n N)^{(l)}}- \dim \Gamma_{(\rho_n C)^{(l)}}
            \end{equation}
        holds for $0 \leq  i \leq \#\hat{I}$.
    \end{prop}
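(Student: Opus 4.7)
The plan is to translate \eqref{desiredineq} into an equality of dimensions of the kernels $\mathcal{U}_i$ and $U_i$ appearing in \eqref{diagramcase2}, and then establish this equality by induction on $i$ using a commutator argument. Since the horizontal rows of \eqref{diagramcase2} exhibit $\Gamma_{(\rho_n N \oplus M_i)^{(l)}}$ and $\Gamma_{M_i^{(l)}}$ as extensions of $\Gamma_{(\rho_n N)^{(l)}}$ and $\Gamma_{(\rho_n C)^{(l)}}$ by $\mathcal{U}_i$ and $U_i$ respectively, the target equality \eqref{desiredineq} is equivalent to $\dim \mathcal{U}_i = \dim U_i$. The inequality $\dim \mathcal{U}_i \leq \dim U_i$ follows from the injectivity of $\varphi_i$ (Lemma \ref{LemPhiinj}), so the work lies in the reverse inequality.

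I induct on $i$. The base case $i = 0$ is immediate: $M_0 = \rho_n C$ is already a direct summand of $\rho_n N$, so $U_0$ and $\mathcal{U}_0$ are trivial. For the inductive step, the direct sum decomposition $M_i^{(l)} = M_{i-1}^{(l)} \oplus M[\hat{\mathbf{s}}_i]^{(l)}$ produces compatible surjections $U_i \twoheadrightarrow U_{i-1}$ and $\mathcal{U}_i \twoheadrightarrow \mathcal{U}_{i-1}$ whose kernels $V_i$ (as in \eqref{DefViker}) and $\mathcal{V}_i$ are closed subgroups of $\mathbb{G}_{a/\overline{\mathbb{F}_q(t)}}$ (as recalled at the end of Section \ref{section:t-motives mzv}), and $\varphi_i$ restricts to an injection $\mathcal{V}_i \hookrightarrow V_i$. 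When $\dim V_i = 0$ the conclusion is automatic; so I must treat the case $\dim V_i = 1$ and show $\dim \mathcal{V}_i = 1$.

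For this, I adapt the commutator construction illustrated in the final example of Section \ref{section:t-motives mzv}. Given $\xi \in \overline{\mathbb{F}_q(t)}$ and the corresponding $v_\xi \in V_i(\overline{\mathbb{F}_q(t)})$, surjectivity of $\overline{\varphi}_i$ furnishes a lift $\gamma \in \Gamma_{(\rho_n N \oplus M_i)^{(l)}}(\overline{\mathbb{F}_q(t)})$. For any $\alpha \in \overline{\mathbb{F}_q(t)}^\times$, surjectivity of $\pi_i \circ \overline{\varphi}_i = \psi_i \circ \varpi_i$ produces $\delta$ whose image in $\Gamma_{(\rho_n C)^{(l)}}$ is the scalar matrix $\alpha \cdot I_{n+1}$. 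Since $\Gamma_{(\rho_n N)^{(l)}}$ is commutative (Section 4), we have $\varpi_i(\delta^{-1}\gamma^{-1}\delta\gamma) = 1$, so $\delta^{-1}\gamma^{-1}\delta\gamma \in \mathcal{U}_i$. A block-wise computation parallel to \eqref{calexample} then yields
\begin{equation}
\varphi_i(\delta^{-1}\gamma^{-1}\delta\gamma) = v_{(\alpha^{\wt(\mathbf{s}_i)}-1)\xi},
\end{equation}
where $\hat{\mathbf{s}}_i = (\mathbf{s}_i, m_i)$. Fixing $\xi = 1$ and letting $\alpha$ range over $\overline{\mathbb{F}_q(t)}^\times$, the right-hand side covers infinitely many points of $V_i(\overline{\mathbb{F}_q(t)}) \cong \overline{\mathbb{F}_q(t)}$, so the image of $\varphi_i|_{\mathcal{V}_i}$ has positive dimension. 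This forces $\dim \mathcal{V}_i = 1$ and closes the induction.

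The main obstacle is verifying the commutator formula in full generality. The defining matrix of $M[\hat{\mathbf{s}}_i]^{(l)}$ places the coordinate parametrizing $V_i$ in a single entry whose diagonal neighbors carry $(t-\theta)^{\wt(\mathbf{s}_i)}$-type twists, and this block pattern produces the scaling factor $\alpha^{\wt(\mathbf{s}_i)}$ under conjugation by $\delta$. Crucially, the conditions \eqref{subclosed} on $I$ and (En) on its enumeration guarantee that every proper sub-index of $\mathbf{s}_i$ and every lower-order prolongation has already been realized in $M_{i-1}$, so the commutator leaves all other coordinates of $U_i$ unchanged and truly lands in $V_i$. This parallels the patterns exhibited around \eqref{example V_3}--\eqref{example V_4} and the inductive description of $V_i$ at the end of Section \ref{section:t-motives mzv}.
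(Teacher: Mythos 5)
Your proof is correct and follows essentially the same route as the paper: the same induction on $i$, the same diagram and injectivity lemma (Lemma \ref{LemPhiinj}), and the same commutator trick exploiting the commutativity of $\Gamma_{(\rho_n N)^{(l)}}$ to produce the $\alpha^{\wt(\mathbf{s}_i)}$-scaling inside $V_i \simeq \mathbb{G}_a$. Your repackaging through the kernel $\mathcal{V}_i$ (and getting the upper bound from injectivity of $\varphi_i$ rather than the trivial transcendence-degree inequality) is only a cosmetic variation of the paper's argument, which instead proves $V_i \subset \varphi_i(\mathcal{U}_i)$ and counts dimensions through the image $\varphi_i(\mathcal{U}_i)$.
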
    
    \begin{proof}
        The one-side inequality $\dim \Gamma_{(\rho_n N \oplus M_i)^{(l)}} \leq \dim \Gamma_{M_i^{(l)}}+\dim \Gamma_{(\rho_n N)^{(l)}}- \dim \Gamma_{(\rho_n C)^{(l)}}$ easily follows from Theorem \ref{RefinedPapanikolasThm} and the trivial inequality 
        \begin{align}
        &\trdeg_{\overline{k}}\overline{k}\left(\partial_{t}^{(n^\prime)}\Omega|_{t=\theta},\,\partial_{t}^{(n^\prime)}\Omega_l^{(-l^\prime)}|_{t=\theta},\,\partial_{t}^{(n^\prime)}\zeta_{A}^{\rm{AT}}(\mathbf{s})|_{t=\theta} \, \middle|\, 0\leq n^\prime \leq n,\,0\leq l^\prime\leq l,\,\mathbf{s} \in I
        \right)\\
        \leq &\trdeg_{\overline{k}}\overline{k}\left(\partial_{t}^{(n^\prime)}\Omega|_{t=\theta},\,\partial_{t}^{(n^\prime)}\Omega_l^{(-l^\prime)}|_{t=\theta} \, \middle|\, 0\leq n^\prime \leq n,\,0\leq l^\prime\leq l
        \right)\\
        &+\trdeg_{\overline{k}}\overline{k}\left(\partial_{t}^{(n^\prime)}\Omega|_{t=\theta},\,\partial_{t}^{(n^\prime)}\zeta_{A}^{\rm{AT}}(\mathbf{s})|_{t=\theta} \, \middle|\, 0\leq n^\prime \leq n,\,\mathbf{s} \in I
        \right)\\
        &-\trdeg_{\overline{k}}\overline{k}\left(\partial_{t}^{(n^\prime)}\Omega|_{t=\theta} \, \middle|\, 0\leq n^\prime \leq n
        \right).
    \end{align}
        
        We prove the opposite inequality by induction on $i$.
        In the case $i=0$, we have $M_i=\rho_n C$ and hence we have 
        \begin{equation}   
        \dim \Gamma_{(\rho_n N \oplus M_i)^{(l)}} =\dim \Gamma_{(\rho_n N)^{(l)} } 
        \end{equation}
        as $\rho_n C$ is a sub-pre-$t$-motive of $\rho_n N$. Therefore,  \eqref{desiredineq} holds.
        
        Let us consider the case $i \geq 1$.
        If $\dim \Gamma_{M_i^{(l)}}=\dim \Gamma_{M_{i-1}^{(l)}}$ (see \eqref{dimGammasucc}), we can obtain \eqref{desiredineq} as follows:
            \begin{align}
                \dim \Gamma_{(\rho_n N \oplus M_i)^{(l)}} 
                &\geq \dim \Gamma_{(\rho_n N \oplus M_{i-1})^{(l)}}\\
                &\geq \dim \Gamma_{M_{i-1}^{(l)}}+\dim \Gamma_{(\rho_n N)^{(l)}}- \dim \Gamma_{(\rho_n C)^{(l)}}\\
                &= \dim \Gamma_{M_i^{(l)}}+\dim \Gamma_{(\rho_n N)^{(l)}}- \dim \Gamma_{(\rho_n C)^{(l)}},
            \end{align}
        the second inequality is given by the induction hypothesis. Hence we assume $\dim \Gamma_{M_i^{(l)}}=\dim \Gamma_{M_{i-1}^{(l)}}+1$ (see \eqref{dimGammasucc}) in what follows.
        
        We claim that the inequality
        \begin{equation}\label{keyequation}
                \dim \varphi_i(\mathcal{U}_i) \geq \dim U_i
            \end{equation}
        holds under this assumption.
                We first note that the kernel $V_i$ in \eqref{DefViker} is isomorphic to $\mathbb{G}_a$ as we have observed that $V_i$ is a closed subgroup of $\mathbb{G}_a$ in the end of Section \ref{section:t-motives mzv} and we have assumed that $\dim \Gamma_{M_i^{(l)}}=\dim \Gamma_{M_{i-1}^{(l)}}+1$, which implies $\dim V_i=\dim U_i-\dim U_{i-1}=\dim \Gamma_{M_i^{(l)}}-\dim \Gamma_{M_{i-1}^{(l)}}=1$.
        
        Let us begin by showing the inclusion $V_i \subset \varphi_i(\mathcal{U}_i)$.
        We take arbitrary $x \in \overline{\mathbb{F}_q(t)}$ and let $ v_x \in V_i(\overline{\mathbb{F}_q(t)})$ be the element which is corresponding to $x$ via the isomorphism $V_i \simeq \mathbb{G}_a$. 
        By surjectivity of $\overline{\varphi}_i:\Gamma_{(\rho_n N \oplus M_i)^{(l)}} \twoheadrightarrow \Gamma_{M_i^{(l)}}$, we can take 
                    \begin{equation}
                        \gamma \in \Gamma_{(\rho_n N \oplus M_i)^{(l)}}(\overline{\mathbb{F}_q(t)})
                    \end{equation} 
        such that $\overline{\varphi}_i(\gamma)=v_x$.
        We also take $a \in \overline{\mathbb{F}_q(t)}^{\times}\setminus \overline{\mathbb{F}_q}^{\times}$ and      
        take
            \begin{equation}
                \delta \in \Gamma_{(\rho_n N \oplus M_i)^{(l)}}(\overline{\mathbb{F}_q(t)})
            \end{equation} 
        such that we have
            \begin{equation}
                \pi_i \circ \overline{\varphi}_i (\delta)= \psi_i \circ \varpi_i(\delta)= \begin{pmatrix}
                    a&0&\cdots&\cdots&0\\
                    0&a&0&\cdots&0\\
                    \vdots&0&a&\ddots &\vdots\\
                    \vdots&\vdots&\ddots&\ddots&0\\
                    0&0&\cdots&0&a
                \end{pmatrix}\in \Gamma_{(\rho_n C)^{(l)}}(\overline{\mathbb{F}_q(t)}),
            \end{equation}
            recall here that $\varpi_i$ and $\psi_i$ are suejection.  
            As $\Gamma_{(\rho_n N)^{(l)}}(\overline{\mathbb{F}_q(t)})$ is a commutative group, we have
                \begin{equation}
                    \varpi_i(\delta^{-1}\gamma^{-1}\delta \gamma)=1 \in \Gamma_{(\rho_n N)^{(l)}}(\overline{\mathbb{F}_q(t)}),
                \end{equation}
            hence $\delta^{-1}\gamma^{-1}\delta \gamma$ is an element of the kernel $ \mathcal{U}_i(\overline{\mathbb{F}_q(t)})$.
            On the other hand, we have 
            \begin{equation}
                    \varphi_i(\delta^{-1}\gamma^{-1}\delta \gamma)=v_x^{-1}v_{a^{\operatorname{wt}\mathbf{s}_i}x}=v_{a^{\operatorname{wt}\mathbf{s}_i}x-x}=v_{(a^{\operatorname{wt}\mathbf{s}_i}-1)x}
            \end{equation}
        by the similar calculation to \eqref{calexample}.
        Since $a \in \overline{\mathbb{F}_q(t)}^{\times}\setminus \overline{\mathbb{F}_q}^{\times}$ and $x \in \overline{\mathbb{F}_q(t)}$ are arbitrarily chosen, we have $V_i \subset \varphi_i(\mathcal{U}_i)$.

        We assumed that $\dim \Gamma_{(\rho_n N \oplus M_{i-1})^{(l)}} \geq \dim \Gamma_{M_{i-1}^{(l)}}+\dim \Gamma_{(\rho_n N)^{(l)}}- \dim \Gamma_{(\rho_n C)^{(l)}}$, which is the induction hypothesis. By this assumption we also have 
        \begin{align}\label{Uinductionhyp}
                        \dim \varphi_{i-1}(\mathcal{U}_{i-1})&=\dim \mathcal{U}_{i-1}=\dim \Gamma_{(\rho_n N \oplus M_{i-1} )^{(l)}} - \dim \Gamma_{(\rho_n N)^{(l)}}\\
                        &\geq \dim \Gamma_{M_{i-1}^{(l)}}-\dim \Gamma_{(\rho_n C)^{(l)}}=\dim U_{i-1}.
        \end{align}
        Here the first equality follows from Lemma \ref{LemPhiinj}.
        
        Next we will obtain $\dim \varphi_i(\mathcal{U}_i) \geq \dim U_i$.
        As $(\rho_n N \oplus M_{i-1})^{(l)}$ is a direct summand of $(\rho_n N \oplus M_i)^{(l)}$ for each $1  \leq i \leq \# \hat{I}$, we have a surjective homomorphism $\Gamma_{(\rho_n N \oplus M_i)^{(l)}} \twoheadrightarrow \Gamma_{(\rho_n N \oplus M_{i-1})^{(l)}}$, which induces a surjection $\mathcal{F}_i:\mathcal{U}_i \twoheadrightarrow \mathcal{U}_{i-1}$.
        If we consider the following commutative diagram
                \begin{equation}
\begin{tikzpicture}[auto]
\node (03) at (0, 3) {$\mathcal{U}_{i}$}; \node (33) at (3, 3) {$\mathcal{U}_{i-1}$}; 
\node (00) at (0, 0) {$U_{i}$}; \node (30) at (3, 0) {$U_{i-1}$,}; 
\draw[->>] (03) to node {$\mathcal{F}_i$}(33);
\draw[->>] (00) to node {$f_i$}(30);
\draw[->] (03) to node {$\varphi_{i}$} (00);
\draw[->] (33) to node {$\varphi_{i-1}$} (30);
\end{tikzpicture}
\end{equation}
then we obtain 
                \begin{align}
                    \dim f_i \left(\varphi_i(\mathcal{U}_i)\right)
                    &=\dim (f_i \circ \varphi_i)(\mathcal{U}_i)
                    =\dim (\varphi_{i-1}\circ \mathcal{F}_i)(\mathcal{U}_i)
                    =\dim \left( \varphi_{i-1}(\mathcal{F}_i(\mathcal{U}_i))\right)\\
                    &= \dim \varphi_{i-1}(\mathcal{U}_{i-1}) \geq \dim U_{i-1}
        \end{align}
        by \eqref{Uinductionhyp}. As we have an exact sequences
        \begin{equation}
            1 \rightarrow V_i \hookrightarrow \varphi_i(\mathcal{U}_i) \twoheadrightarrow f_i \left(\varphi_i(\mathcal{U}_i)\right) \twoheadrightarrow 1
        \end{equation}
        and
        \begin{equation}
            1 \rightarrow V_i \hookrightarrow \mathcal{U}_i\twoheadrightarrow U_{i-1} \twoheadrightarrow 1,
        \end{equation}
        we have
            \begin{equation}
                \dim \varphi_i(\mathcal{U}_i)=\dim V_i+\dim f_i \left(\varphi_i(\mathcal{U}_i)\right)\geq \dim V_i+\dim U_{i-1}=\dim U_i.
        \end{equation}
   We can deduce the inequality \eqref{desiredineq} under the assumption $\dim \Gamma_{M_i^{(l)}}=\dim \Gamma_{M_{i-1}^{(l)}}+1$ as follows:
   \begin{align}
            \dim \Gamma_{(\rho_n N \oplus M_i)^{(l)}}&=\dim \mathcal{U}_i+\dim \Gamma_{(\rho_n N)^{(l)}}=\dim \varphi_i(\mathcal{U}_i)+\dim \Gamma_{(\rho_n N)^{(l)}}\\
            &\geq \dim U_i+\dim \Gamma_{(\rho_n N)^{(l)}}\\
            &=\dim \Gamma_{M_i^{(l)}}+\dim \Gamma_{(\rho_n N)^{(l)}}- \dim \Gamma_{(\rho_n C)^{(l)}}.
        \end{align}
        Here the second equality follows from Lemma \ref{LemPhiinj}.
Therefore we complete our proof for \eqref{desiredineq}.
    \end{proof}           

We get the following Theorem \ref{RefinedPapanikolasThm} and Proposition \ref{Prop_on_Galois_grp}.

\begin{thm}\label{MainThm}
For $n\geq 0 $ and $l \geq 0$ and an finite set $I$ of indices such that \eqref{subclosed} holds, we have
    \begin{align}
        &\trdeg_{\overline{k}}\overline{k}\left(\partial_{t}^{(n^\prime)}\Omega|_{t=\theta},\,\partial_{t}^{(n^\prime)}\Omega_l^{(-l^\prime)}|_{t=\theta},\,\partial_{t}^{(n^\prime)}\mathscr{L}_{\mathbf{s}}|_{t=\theta} \, \middle|\, 0\leq n^\prime \leq n,\,0\leq l^\prime< l,\,\mathbf{s} \in I
        \right)\\
        =&\trdeg_{\overline{k}}\overline{k}\left(\partial_{t}^{(n^\prime)}\Omega|_{t=\theta},\,\partial_{t}^{(n^\prime)}\Omega_l^{(-l^\prime)}|_{t=\theta} \, \middle|\, 0\leq n^\prime \leq n,\,0\leq l^\prime< l
        \right)\\
        &+\trdeg_{\overline{k}}\overline{k}\left(\partial_{t}^{(n^\prime)}\Omega|_{t=\theta},\,\partial_{t}^{(n^\prime)}\mathscr{L}_{\mathbf{s}}|_{t=\theta} \, \middle|\, 0\leq n^\prime \leq n,\,\mathbf{s} \in I
        \right)\\
        &-\trdeg_{\overline{k}}\overline{k}\left(\partial_{t}^{(n^\prime)}\Omega|_{t=\theta} \, \middle|\, 0\leq n^\prime \leq n
        \right).
    \end{align}
\end{thm}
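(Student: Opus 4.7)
The plan is to deduce the theorem as a direct consequence of Proposition \ref{Prop_on_Galois_grp} by translating each of the four dimensions of $t$-motivic Galois groups appearing there into the corresponding transcendence degree via Theorem \ref{RefinedPapanikolasThm}(ii). We take $i=\#\hat{I}$, so that the pre-$t$-motive $M_i$ in \eqref{DefMi} encodes every index $\mathbf{s}\in I$ through its rigid analytic trivialization, and we work with $(\rho_n N\oplus M_i)^{(l)}=(\rho_n N)^{(l)}\oplus M_i^{(l)}$, using the commutativity of prolongation and derivation discussed at the end of Section \ref{subsectionderived}.

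First I would record the period interpretations of each of the four pre-$t$-motives in play. For $(\rho_n C)^{(l)}$, the rigid analytic trivialization is $\rho_n(\Omega)$, so Theorem \ref{RefinedPapanikolasThm}(ii) together with Proposition \ref{Uchino} identifies $\dim\Gamma_{(\rho_n C)^{(l)}}$ with the transcendence degree of $\overline{k}(\partial_t^{(n^\prime)}\Omega|_{t=\theta}\mid 0\leq n^\prime\leq n)$. For $(\rho_n N)^{(l)}$, the block-diagonal rigid analytic trivialization $\rho_n\overline{\Psi}_l$ in \eqref{DefoverlinePsi} has entries whose values at $t=\theta$ generate, together with $\overline{k}$, the field $\overline{k}(\partial_t^{(n^\prime)}\Omega|_{t=\theta},\,\partial_t^{(n^\prime)}\Omega_l^{(-l^\prime)}|_{t=\theta}\mid 0\leq n^\prime\leq n,\,0\leq l^\prime\leq l-1)$. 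For $M_i^{(l)}$, the rigid analytic trivialization of each block $M[\hat{\mathbf{s}}_j]$ consists of products of powers of $\Omega$ with series $\mathscr{L}_{\mathbf{s},\,jl}$ for various $\mathbf{s}\in\operatorname{Sub}(\mathbf{s}_i)\subseteq I$ (using the condition \eqref{subclosed}), and after prolongation one obtains all hyperderivatives up to order $n$. By the generalized Leibniz rule \eqref{Leibniz} together with Proposition \ref{Uchino}, this field coincides with $\overline{k}(\partial_t^{(n^\prime)}\Omega|_{t=\theta},\,\partial_t^{(n^\prime)}\mathscr{L}_{\mathbf{s}}|_{t=\theta}\mid 0\leq n^\prime\leq n,\,\mathbf{s}\in I)$. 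Finally, $(\rho_n N\oplus M_i)^{(l)}$ is the direct sum and its associated field is the compositum, so Theorem \ref{RefinedPapanikolasThm}(ii) identifies $\dim\Gamma_{(\rho_n N\oplus M_i)^{(l)}}$ with the transcendence degree on the left-hand side of the desired equality.

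Substituting these four identifications into the dimension equality
\begin{equation}
\dim\Gamma_{(\rho_n N\oplus M_i)^{(l)}}=\dim\Gamma_{M_i^{(l)}}+\dim\Gamma_{(\rho_n N)^{(l)}}-\dim\Gamma_{(\rho_n C)^{(l)}}
\end{equation}
of Proposition \ref{Prop_on_Galois_grp} yields precisely the asserted transcendence-degree equality of Theorem \ref{MainThm}. The fact that passing from $N$ to its derivation $N^{(l)}$ (and likewise for $M_i$, $\rho_n C$, $\rho_n N\oplus M_i$) does not change the $t$-motivic Galois group, by \eqref{derivedsameGaloisgroup}, ensures that the dimensions computed from $N^{(l)}$ are the same as those computed from the rigid analytic trivializations of $N$ themselves, so the period-theoretic interpretations above remain valid.

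The main obstacle I foresee is not at the level of Galois-group dimensions (which is settled by Proposition \ref{Prop_on_Galois_grp}) but in the bookkeeping of the field identifications: one has to check that the entries of $\rho_n\overline{\Psi}_l$ and of the prolonged rigid analytic trivialization of $M_i$, once specialized at $t=\theta$, generate exactly the fields that appear in the theorem's statement, with no extra hyperderivatives of $\Omega$ slipping in or out. This is where the sub-closedness assumption \eqref{subclosed} on $I$ and the enumeration condition (En) do the heavy lifting; together with the generalized Leibniz rule they guarantee that the products $\Omega^{s_{j}+\cdots+s_{d}}\mathscr{L}_{\mathbf{s},\,jl}$ and their hyperderivatives contribute no information beyond hyperderivatives of $\Omega$ and of $\mathscr{L}_{\mathbf{s}^\prime}$ for $\mathbf{s}^\prime\in\operatorname{Sub}(\mathbf{s})\subseteq I$. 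Once this verification is complete the theorem follows immediately.
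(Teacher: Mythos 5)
Your proposal is correct and follows essentially the same route as the paper: the paper's proof of Theorem \ref{MainThm} is exactly the combination of Proposition \ref{Prop_on_Galois_grp} (with $i=\#\hat{I}$) and Theorem \ref{RefinedPapanikolasThm}(ii), translating each Galois-group dimension into the transcendence degree of the field generated by the specialized entries of the corresponding rigid analytic trivialization. Your extra bookkeeping — using \eqref{derivedsameGaloisgroup}, Proposition \ref{Uchino}, the generalized Leibniz rule, and the sub-closedness of $I$ to identify those fields with the ones in the statement — is precisely what the paper leaves implicit, so there is no gap.
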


In the case where $u_i$ is the Anderson-Thakur polynomial $H_{i-1}$ for each $i \geq 1$, we have Theorem \ref{intro main result}.
When $n=0$ and $I=\{1,\,2,\,\ldots,\,s\}$, this theorem recovers the algebraic independence of arithmetic gamma values and Carlitz zeta values established in \cite{CPTY10}.
That is, Theorem \ref{MainThm} deduce \cite[Theorem 4.2.2]{CPTY10} by \cite[Corollary 3.3.3]{CPTY10} and \cite[Main Theorem]{CY07}.
Moreover, by \eqref{hyptran:andersonthakur:theta} and Theorem \ref{matsuzuki thm},
we get the following corollary.
\begin{cor} \label{maincor}
    If we take an index $\mathbf{s}=(s_1,\,\dots,\,s_d) \in \mathbb{Z}_{\geq 1}^r$ and assume that $p \nmid s_i,\,(q-1) \nmid s_i$ for $1\leq i\leq d$ and $s_1,\,\dots,\,s_d$ are distinct, then the following is an algebraically independent set
\[
\left\{ \begin{matrix}\D_{t}^{(n^\prime)}G(q^{l'}/(1-q^l))|_{t=\theta},\,\\
\D_{t}^{(n^\prime)}\zeta_{A}^{\rm{AT}}(s_{j_1},\,\dots,\,s_{j_{d^\prime}})|_{t=\theta}
\end{matrix}\, \middle|\, 0\leq n^\prime \leq n,\,0\leq l^\prime\leq l-1,\,1\leq j_1<\cdots<j_{d^\prime}\leq d \right\}.
\]
\end{cor}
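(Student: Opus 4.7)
The plan is to apply Theorem \ref{MainThm} with the finite set
\[
I := \{(s_{j_1},\dots,s_{j_{d'}}) \mid 1 \leq j_1 < \cdots < j_{d'} \leq d,\, d' \geq 1\}
\]
of all nonempty (not necessarily consecutive) sub-tuples of $\mathbf{s}$. First I would verify the sub-closed condition \eqref{subclosed}: for $\mathbf{t}=(s_{j_1},\dots,s_{j_{d'}}) \in I$ and $1 \leq i \leq i' \leq d'$, the consecutive sub-tuple $(s_{j_i},\dots,s_{j_{i'}})$ again has strictly increasing indices in $\{1,\dots,d\}$, hence lies in $I$. Note $|I| = 2^d - 1$.

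Taking $u_i = H_{i-1}$, so that $\mathscr{L}_{\mathbf{t}} = \zeta_A^{\rm{AT}}(\mathbf{t})$, Theorem \ref{MainThm} expresses the transcendence degree of the large field as the sum of its first and second right-hand side terms minus the third. I would then evaluate each summand: the first equals $l(n+1)$ by \eqref{hyptran:Omega_1:and:omega_l}; the second equals $(n+1)\cdot 2^d$ by Theorem \ref{matsuzuki thm}, whose hypotheses ($p \nmid s_i$, $(q-1) \nmid s_i$, and the $s_i$ distinct) are exactly those imposed in the corollary; and the third equals $n+1$ by \cite[Proof of Theorem 2.1]{M18}. The total is $(n+1)(l + 2^d - 1)$, which matches exactly the cardinality of the set appearing in the corollary.

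It remains to identify this large field with the one generated over $\overline{k}$ by the corollary's set (which does not contain the $\partial_t^{(n')}\Omega|_{t=\theta}$). Propositions \ref{prop:periodpart} and \ref{prop:quasiperiodpart} express each $\Omega_l^{(-l')}$ as a quotient of $G(q^{l''}/(1-q^l))$'s up to $\overline{k}^{\times}$-factors, and the example following Proposition \ref{prop:quasiperiodpart} gives the reverse expressions. A short calculation with the infinite-product formulas also shows $\Omega \sim \prod_{j=0}^{l-1}\Omega_l^{(-j)}$, so $\Omega$ itself lies in the field generated by the $\Omega_l^{(-j)}$'s. Applying the generalized Leibniz rule \eqref{Leibniz} to propagate these identities through all hyperderivatives, and specializing at $t=\theta$ by Proposition \ref{Uchino}, I obtain the desired equality of fields. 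Since the transcendence degree equals the cardinality, the set must be algebraically independent over $\overline{k}$.

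The main, though largely bookkeeping, obstacle is this last step: propagating field equalities among $\Omega$, the $\Omega_l^{(-l')}$'s, and the $G(q^{l'}/(1-q^l))$'s through all hyperderivatives and through evaluation at $t=\theta$. The argument is entirely parallel to the derivation of \eqref{hyptran:andersonthakur:gamma:t} via Lemmas \ref{lem;derivegamma} and \ref{Lem gamma space deform space}, so no new ideas beyond Leibniz-rule manipulations are required.
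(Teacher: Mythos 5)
Your proposal is correct and takes essentially the same route as the paper, whose one-line derivation of the corollary is exactly your argument spelled out: apply Theorem \ref{MainThm} with $I$ the set of all subsequences of $\mathbf{s}$ (which is sub-closed), evaluate the three terms by \eqref{hyptran:Omega_1:and:omega_l}, Theorem \ref{matsuzuki thm}, and Maurischat's computation, and transfer between the $\Omega_l^{(-l')}$-values and the $G(q^{l'}/(1-q^l))$-values at $t=\theta$ via Propositions \ref{prop:periodpart}, \ref{prop:quasiperiodpart} and the Leibniz rule, so that the transcendence degree matches the cardinality of the set. One minor imprecision: with $u_i=H_{i-1}$ one has $\mathscr{L}_{\mathbf{t}}=\Omega^{\wt(\mathbf{t})}\zeta_A^{\mathrm{AT}}(\mathbf{t})$ rather than equality, but since $\Omega|_{t=\theta}\neq 0$ the generated fields coincide, an identification the paper itself makes silently when passing from Theorem \ref{MainThm} to Theorem \ref{intro main result}.
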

%\bibliographystyle{siam}    
%\bibliography{BibTexLibrary1(Matsuzuki).bib} 
%This result includes the algebraic independence of arithmetic gamma values and Carlitz zeta values by \cite{CPTY10} when $n=0$ and $r=1$. %and further includes the algebraic independence of $\D_t^{(n')}\Omega|_{t=\theta}\quad (0\leq n'\leq n)$ by \cite{M22} when $l=r=0$. 

%Theorem \ref{Namoijamformula}

\section*{Acknowledgements}
The first author is supported by JSPS KAKENHI Grant Number JP22KJ2534. The second author is supported by JSPS KAKENHI Grant Number JP23KJ1079.

\end{document}